\DeclareSymbolFont{cyrletters}{OT2}{wncyr}{m}{n}
\DeclareMathSymbol{\Sha}{\mathalpha}{cyrletters}{"58}
\newcommand\cyr{\renewcommand\rmdefault{wncyr}
\renewcommand\sfdefault{wncyss}
\renewcommand\encodingdefault{OT2}
\normalfont\selectfont}
\DeclareTextFontCommand{\textcyr}{\cyr}
\theoremstyle{plain}
\newtheorem{theorem}{Theorem}[section]
\newtheorem*{theorem-nn}{Theorem}
\newtheorem{lemma}[theorem]{Lemma}
\newtheorem{proposition}[theorem]{Proposition}
\newtheorem*{proposition-nn}{Proposition}
\newtheorem{corollary}[theorem]{Corollary}
\theoremstyle{definition}
\newtheorem{example}[theorem]{Example}
\newtheorem{remark}[theorem]{Remark}
\theoremstyle{remark}
\newcommand{\bZ}{\mathbbm{Z}}\newcommand{\bQ}{\mathbbm{Q}}
\newcommand{\bC}{\mathbbm{C}}
\newcommand{\bG}{\mathbbm{G}}\newcommand{\bF}{\mathbbm{F}}
\newcommand{\bA}{\mathbbm{A}}
\newcommand{\GL}{{\rm GL}}\newcommand{\SL}{{\rm SL}}
\newcommand{\PGL}{{\rm PGL}}\newcommand{\PSL}{{\rm PSL}}
\newcommand{\PSU}{{\rm PSU}}\newcommand{\SU}{{\rm SU}}
\newcommand{\PGU}{{\rm PGU}}\newcommand{\GU}{{\rm GU}}
\newcommand{\Sp}{{\rm Sp}}\newcommand{\PSp}{{\rm PSp}}
\title{Hasse norm principle for metacyclic extensions with trivial Schur multiplier}
\author[A. Hoshi]{Akinari Hoshi}
\address{Department of Mathematics, Niigata University, Niigata 950-2181, Japan}
\email{hoshi@math.sc.niigata-u.ac.jp}
\author[A. Yamasaki]{Aiichi Yamasaki}
\address{Department of Mathematics, Kyoto University, Kyoto 606-8502, Japan}
\email{aiichi.yamasaki@gmail.com}
\thanks{{\it Key words and phrases.} 
Algebraic tori, norm one tori, Hasse norm principle, weak approximation, Tamagawa number.\\ 
This work was partially supported by JSPS KAKENHI Grant Numbers 
19K03418, 20H00115, 20K03511, 24K00519, 24K06647.
}
\subjclass[2010]{Primary 11E72, 12F20, 13A50, 14E08, 20C10, 20G15.}
\begin{document}
\maketitle
\begin{abstract}
Let $k$ be a global field, $K/k$ be a finite separable field extension 
and $L/k$ be the Galois closure of $K/k$ 
with Galois groups $G={\rm Gal}(L/k)$ and $H={\rm Gal}(L/K)\lneq G$. 
In 1931, Hasse proved that if $G$ is cyclic, 
then the Hasse norm principle holds for $K/k$. 
We show that if $G$ is metacyclic with trivial Schur multiplier $M(G)=0$, 
then $H$ is cyclic and the Hasse norm principle holds for $K/k$. 
Some examples of metacyclic, dihedral, quasidihedral, modular, 
generalized quaternion, extraspecial groups 
and $Z$-groups $G$ with trivial Schur multiplier $M(G)=0$ are given. 
These provide new examples which 
the Hasse norm principle hold for non-Galois extensions $K/k$ 
whose Galois closure is $L/k$ with metacyclic $G={\rm Gal}(L/k)$ and $M(G)=0$. 
\end{abstract}
\tableofcontents
%
\section{Introduction: Main theorem (Theorem \ref{thmain0})}\label{S1}

Let $k$ be a global field, 
i.e. a number field (a finite extension of $\bQ$) 
or a function field of an algebraic curve over 
$\bF_q$ (a finite extension of $\bF_q(t))$.
Let $K/k$ be a finite separable field extension and 
$\bA_K^\times$ be the idele group of $K$. 
We say that {\it the Hasse norm principle holds for $K/k$} 
if $(N_{K/k}(\bA_K^\times)\cap k^\times)/N_{K/k}(K^\times)=1$ 
where $N_{K/k}$ is the norm map. 
%
Hasse \cite[Satz, page 64]{Has31} proved that 
the Hasse norm principle holds for any cyclic extension $K/k$ 
but does not hold for bicyclic extension $\bQ(\sqrt{-39},\sqrt{-3})/\bQ$. 


Let $G$ be a finite group and $M(G)=H^2(G,\bC^\times)\simeq H^2(G,\bQ/\bZ)\xrightarrow[\sim]{\delta} 
H^3(G,\bZ)$ be the Schur multiplier of $G$ 
where $\delta$ is the connecting homomorphism 
(see e.g. Neukirch, Schmidt and Wingberg \cite[Chapter I, \S 3, page 26]{NSW00}). 
For Galois extensions $K/k$, Tate \cite{Tat67} proved: 
%

\begin{theorem}[{Tate \cite[page 198]{Tat67}}]\label{thTate}
Let $k$ be a global field, $K/k$ be a finite Galois extension 
with Galois group $G={\rm Gal}(K/k)$. 
Let $V_k$ be the set of all places of $k$ 
and $G_v$ be the decomposition group of $G$ at $v\in V_k$. 
Then 
\begin{align*}
(N_{K/k}(\bA_K^\times)\cap k^\times)/N_{K/k}(K^\times)\simeq 
{\rm Coker}\left\{\bigoplus_{v\in V_k}\widehat H^{-3}(G_v,\bZ)\xrightarrow{\rm cores}\widehat H^{-3}(G,\bZ)\right\}
\end{align*}
where $\widehat H$ is the Tate cohomology. 
In particular, the Hasse norm principle holds for $K/k$ 
if and only if the restriction map 
$H^3(G,\bZ)\xrightarrow{\rm res}\bigoplus_{v\in V_k}H^3(G_v,\bZ)$ 
is injective. 
In particular, if 
$H^3(G,\bZ)\simeq M(G)=0$, then 
the Hasse norm principle holds for $K/k$. 
\end{theorem}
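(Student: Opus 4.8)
The plan is to read off the formula from the long exact sequence of Tate cohomology attached to the fundamental exact sequence of global class field theory, and then to translate the groups that appear by Tate--Nakayama duality, globally and locally. First I would take the exact sequence of $G$-modules $1\to K^\times\to\bA_K^\times\to C_K\to 1$, where $C_K=\bA_K^\times/K^\times$ is the idele class group of $K$. Since $(K^\times)^G=k^\times$ and $(\bA_K^\times)^G=\bA_k^\times$, one has $\widehat H^0(G,K^\times)=k^\times/N_{K/k}(K^\times)$ and $\widehat H^0(G,\bA_K^\times)=\bA_k^\times/N_{K/k}(\bA_K^\times)$, so the kernel of the natural map $\widehat H^0(G,K^\times)\to\widehat H^0(G,\bA_K^\times)$ is exactly $(N_{K/k}(\bA_K^\times)\cap k^\times)/N_{K/k}(K^\times)$, the group measuring failure of the Hasse norm principle. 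By exactness of
\[
\widehat H^{-1}(G,\bA_K^\times)\to\widehat H^{-1}(G,C_K)\xrightarrow{\partial}\widehat H^0(G,K^\times)\to\widehat H^0(G,\bA_K^\times),
\]
this kernel is the image of $\partial$, hence is isomorphic to $\mathrm{Coker}\bigl(\widehat H^{-1}(G,\bA_K^\times)\to\widehat H^{-1}(G,C_K)\bigr)$.

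Next I would identify the two remaining cohomology groups. Shapiro's lemma, applied place by place (here $w$ denotes a place of $K$ above $v$; the restricted-product structure is harmless because the local units are cohomologically trivial at the unramified places), gives $\widehat H^n(G,\bA_K^\times)\cong\bigoplus_{v\in V_k}\widehat H^n(G_v,K_w^\times)$ for every $n$. For $\widehat H^{-1}(G,C_K)$ and for each $\widehat H^{-1}(G_v,K_w^\times)$ I would invoke Tate--Nakayama: cup product with the global fundamental class $u_{K/k}\in H^2(G,C_K)$, respectively with the local fundamental classes $u_{K_w/k_v}\in H^2(G_v,K_w^\times)$, induces isomorphisms $\widehat H^n(G,C_K)\xrightarrow{\sim}\widehat H^{n-2}(G,\bZ)$ and $\widehat H^n(G_v,K_w^\times)\xrightarrow{\sim}\widehat H^{n-2}(G_v,\bZ)$. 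Setting $n=-1$ and substituting into the cokernel above, the Hasse norm obstruction becomes
\[
\mathrm{Coker}\left\{\bigoplus_{v\in V_k}\widehat H^{-3}(G_v,\bZ)\xrightarrow{\mathrm{cores}}\widehat H^{-3}(G,\bZ)\right\},
\]
as asserted.

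For the ``in particular'' clauses I would pass from this cokernel to a kernel by duality. The group $\widehat H^{-3}(G,\bZ)=H_2(G,\bZ)$ is finite and canonically Pontryagin dual to $H^3(G,\bZ)$ (which is $\cong M(G)$, as recalled above), with this duality interchanging corestriction and restriction; likewise for each $G_v$. Hence the displayed cokernel is Pontryagin dual to $\ker\bigl(\mathrm{res}\colon H^3(G,\bZ)\to\bigoplus_{v\in V_k}H^3(G_v,\bZ)\bigr)$, so it vanishes --- equivalently, the Hasse norm principle holds for $K/k$ --- if and only if that restriction map is injective. In particular, when $H^3(G,\bZ)\cong M(G)=0$ the restriction map is trivially injective, and the Hasse norm principle holds.

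The one genuinely nontrivial input is the compatibility used above: that the global fundamental class of $C_K$ localizes at each place $v$ to the local fundamental class in $H^2(G_v,K_w^\times)$, so that the global and local Tate--Nakayama isomorphisms can be chosen simultaneously and the map $\widehat H^{-1}(G,\bA_K^\times)\to\widehat H^{-1}(G,C_K)$ corresponds to corestriction. This coherence of local and global class field theory is the heart of the matter; granting it, the remainder is a diagram chase in Tate cohomology.
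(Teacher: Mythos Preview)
Your argument is correct and is essentially Tate's original proof: the long exact sequence of Tate cohomology for $1\to K^\times\to\bA_K^\times\to C_K\to 1$, followed by the semilocal Shapiro decomposition of $\widehat H^\bullet(G,\bA_K^\times)$ and the simultaneous application of local and global Tate--Nakayama (shifting by $-2$), with the compatibility of fundamental classes ensuring that the connecting map becomes $\bigoplus_v\mathrm{cores}$. The duality between $\widehat H^{-3}(-,\bZ)=H_2(-,\bZ)$ and $H^3(-,\bZ)$, exchanging corestriction and restriction, then yields the restriction criterion.

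The paper itself does not prove Theorem~\ref{thTate}; it is quoted from \cite{Tat67}. What the paper does offer, in Section~\ref{S3}, is a different route to the same conclusion: via Ono's theorem (Theorem~\ref{thOno}) identifying the Hasse norm obstruction with $\Sha(T)$ for $T=R^{(1)}_{K/k}(\bG_{m,K})$, and then Poitou--Tate duality $\Sha(T)^\vee\simeq\Sha^2(G,\widehat{T})$ together with $\widehat{T}\simeq J_G$ and $H^2(G,J_G)\simeq H^3(G,\bZ)$. Your approach stays entirely inside class field theory and Tate cohomology of $\bZ$; the paper's alternative packages the same content through the language of algebraic tori and arithmetic duality, which is the framework used throughout the rest of the paper. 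Both rely on the same deep input (the local--global compatibility of class field theory), so neither is more elementary; the toric viewpoint generalizes more readily to the non-Galois setting that is the paper's focus.
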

Let $C_n$ be the cyclic group of order $n$ 
and 
$V_4\simeq C_2\times C_2$ be the Klein four group. 
If $G\simeq C_n$, then 
$\widehat H^{-3}(G,\bZ)\simeq H^3(G,\bZ)\simeq H^1(G,\bZ)=0$ in Theorem \ref{thTate} 
and hence Hasse's original theorem follows. 
If there exists a place $v$ of $k$ such that $G_v=G$, then 
the Hasse norm principle also holds for $K/k$. 
By Theorem \ref{thTate}, for example, the Hasse norm principle holds for $K/k$ with 
$G\simeq V_4=(C_2)^2$ if and only if 
there exists a (ramified) place $v$ of $k$ such that $G_v=V_4$ because 
$H^3(V_4,\bZ)\simeq\bZ/2\bZ$ and $H^3(C_2,\bZ)=0$. 
The Hasse norm principle holds for $K/k$ with 
$G\simeq (C_2)^3$ if and only if {\rm (i)} 
there exists a place $v$ of $k$ such that $G_v=G$ 
or {\rm (ii)} 
there exist (ramified) places $v_1,v_2,v_3$ of $k$ such that 
$G_{v_i}\simeq V_4$ and 
$H^3(G,\bZ)\xrightarrow{\rm res}
H^3(G_{v_1},\bZ)\oplus H^3(G_{v_2},\bZ)\oplus H^3(G_{v_3},\bZ)$ 
is an isomorphism because 
$H^3(G,\bZ)\simeq(\bZ/2\bZ)^{\oplus 3}$ and 
$H^3(V_4,\bZ)\simeq \bZ/2\bZ$. 

The Hasse norm principle for Galois extensions $K/k$ 
was investigated by Gerth \cite{Ger77}, \cite{Ger78} and 
Gurak \cite{Gur78a}, \cite{Gur78b}, \cite{Gur80} 
(see also \cite[pages 308--309]{PR94}). 
Gurak \cite[Corollary 2.2]{Gur78a} showed that 
the Hasse norm principle holds for a Galois extension $K/k$ 
if the restriction map 
$H^3(G_p,\bZ)\xrightarrow{\rm res}\bigoplus_{v\in V_k}H^3(G_p$ $\cap$ $G_v,\bZ)$ 
is injective for any $p\mid |G|$ where $G_p$ is a $p$-Sylow subgroup of $G={\rm Gal}(K/k)$. 
For example, if $H^3(G_p,\bZ)\simeq M(G_p)=0$ for any $p\mid|G|$, then 
the Hasse norm principle holds for $K/k$. 
In particular, 
because $H^3(C_n,\bZ)=0$, 
the Hasse norm principle holds for $K/k$ 
if all the Sylow subgroups of $G={\rm Gal}(K/k)$ are cyclic. 

Let $K/k$ be a separable field extension with $[K:k]=n$ 
and $L/k$ be the Galois closure of $K/k$.  
Let $G={\rm Gal}(L/k)$ and $H={\rm Gal}(L/K)$ with $[G:H]=n$. 
Then we have $\bigcap_{\sigma\in G} H^\sigma=\{1\}$ 
where $H^\sigma=\sigma^{-1}H\sigma$ and hence 
$H$ contains no normal subgroup of $G$ except for $\{1\}$. 
The Galois group $G$ may be regarded as a transitive subgroup of 
the symmetric group $S_n$ of degree $n$. 
We may assume that 
$H$ is the stabilizer of one of the letters in $G\leq S_n$, 
i.e. $L=k(\theta_1,\ldots,\theta_n)$ and $K=L^H=k(\theta_i)$ 
where $1\leq i\leq n$. 
%
Let $D_n$ be the dihedral group of order $2n$, 
$A_n$ be the alternating group of degree $n$
and 
$\PSL_n(\bF_q)$ be the projective special linear group of degree $n$ 
over the finite field $\bF_q$ of $q=p^r$ elements. 

For non-Galois extensions $K/k$ with $[K:k]=n$, 
the Hasse norm principle was investigated by 
Bartels \cite[Lemma 4]{Bar81a} (holds for $n=p$; prime), 
\cite[Satz 1]{Bar81b} (holds for $G\simeq D_n$), 
Voskresenskii and Kunyavskii \cite{VK84} (holds for $G\simeq S_n$ by $H^1(k,{\rm Pic}\, \overline{X})=0$), 
Kunyavskii \cite{Kun84} $(n=4$ (holds except for $G\simeq V_4, A_4))$, 
Drakokhrust and Platonov \cite{DP87} $(n=6$ (holds except for $G\simeq A_4, A_5))$, 
Endo \cite{End11} (holds for $G$ whose all $p$-Sylow subgroups are cyclic (general $n$), 
Macedo \cite{Mac20} (holds for $G\simeq A_n$ ($n\neq 4)$ by $H^1(k,{\rm Pic}\, \overline{X})=0$), 
Macedo and Newton \cite{MN22} 
($G\simeq A_4$, $S_4$, $A_5$, $S_5$, $A_6$, $A_7$ $($general $n))$, 
Hoshi, Kanai and Yamasaki \cite{HKY22} $(n\leq 15$ $(n\neq 12))$, 
(holds for $G\simeq M_n$ ($n=11,12,22,23,24$; $5$ Mathieu groups)), 
\cite{HKY23} $(n=12)$, 
\cite{HKY25} $(n=16$ and $G$; primitive$)$, 
\cite{HKY} $(G\simeq M_{11}$, $J_1$ $($general $n))$, 
Hoshi and Yamasaki \cite{HY2} (holds for $G\simeq \PSL_2(\bF_7)$ $(n=21)$, 
$\PSL_2(\bF_8)$ $(n=63)$) 
where $G={\rm Gal}(L/k)\leq S_n$ is transitive and $L/k$ is the Galois closure of $K/k$. 
We also refer to 
Browning and Newton \cite{BN16} and 
Frei, Loughran and Newton \cite{FLN18}. 


A group $G$ is called {\it metacyclic} if 
there exists a normal subgroup $N\lhd G$ such that 
$N\simeq C_m$ and $G/N\simeq C_n$ 
for some integers $m,n\geq 1$. 
The following is the main theorem of this paper 
(for $\Sha^2_\omega(G,J_{G/H})\leq H^2(G,J_{G/H})$, 
see 
Section \ref{S3}). 
Note that when $K/k$ is Galois, i.e. $L=K$, with Galois group 
$G={\rm Gal}(K/k)$, 
the Hasse norm principle holds for $K/k$ if $M(G)=0$ (resp. $G\simeq C_n$)
by Tate's theorem (Theorem \ref{thTate}) 
(resp. Hasse's original theorem \cite[Satz, page 64]{Has31}). 
\begin{theorem}[Hasse norm principle for metacyclic extensions with trivial Schur multiplier $M(G)=0$, see Theorem \ref{thmain} for the precise statement]\label{thmain0}
Let $k$ be a field, $K/k$ be a finite 
separable field extension and $L/k$ be the Galois closure of $K/k$ with 
Galois groups $G={\rm Gal}(L/k)$ and $H={\rm Gal}(L/K)\lneq G$. 
Let $M(G)=H^2(G,\bC^\times)\simeq H^3(G,\bZ)$ be the Schur multiplier of $G$. 
If $G$ is metacyclic, then  $H$ is cyclic and 
$\Sha^2_\omega(G,J_{G/H})\leq H^2(G,J_{G/H})\simeq M(G)$. 
In particular, when $k$ is a global field, if $G$ is metacyclic 
with trivial Schur multiplier $M(G)=0$, 
then the Hasse norm principle holds for $K/k$. 
\end{theorem}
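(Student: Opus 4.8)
The plan is to split the claim into a purely group-theoretic computation of $H^2(G,J_{G/H})$ — which needs nothing about $k$ — and a final step, for $k$ a global field, that feeds $M(G)=0$ into the cohomological description of the Hasse norm principle obstruction recorded in Section~\ref{S3}. Throughout I will use that $H$ is core-free in $G$, i.e. $\bigcap_{\sigma\in G}H^\sigma=\{1\}$, since $L/k$ is the Galois closure of $K/k$.

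First I would establish that $H$ is cyclic. Fix a normal subgroup $N\lhd G$ with $N\simeq C_m$ and $G/N\simeq C_n$. The point is that $H\cap N$ is automatically normal in $G$: it is a subgroup of the cyclic group $N$, and for any $g\in G$ the subgroup $g(H\cap N)g^{-1}\leq N$ has the same order as $H\cap N$, while a cyclic group has a unique subgroup of each order; hence $g(H\cap N)g^{-1}=H\cap N$. A normal subgroup of $G$ contained in $H$ is trivial, so $H\cap N=\{1\}$, and therefore $H$ embeds into $G/N\simeq C_n$ and is cyclic. Moreover $[G,G]\leq N$ because $G/N$ is abelian, so $H\cap[G,G]\subseteq H\cap N=\{1\}$ — the identity that will do the real work below.

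Next I would compute $H^2(G,J_{G/H})$. Recall that $J_{G/H}$ sits in the exact sequence of $G$-lattices $0\to\bZ\xrightarrow{\iota}\bZ[G/H]\to J_{G/H}\to0$ (so that $J_{G/H}$ is the character module of the norm one torus $R^{(1)}_{K/k}(\bG_m)$), where $\iota(1)=\sum_{gH\in G/H}gH$ is the norm element. By Shapiro's lemma $H^i(G,\bZ[G/H])\simeq H^i(H,\bZ)$, and under this identification the maps induced by $\iota$ are the restrictions $\mathrm{res}\colon H^i(G,\bZ)\to H^i(H,\bZ)$. The relevant part of the long exact sequence is
\begin{align*}
H^2(G,\bZ)\xrightarrow{\ \mathrm{res}\ }H^2(H,\bZ)\longrightarrow H^2(G,J_{G/H})\longrightarrow H^3(G,\bZ)\xrightarrow{\ \mathrm{res}\ }H^3(H,\bZ).
\end{align*}
Since $H$ is cyclic, $H^3(H,\bZ)=0$, so $H^2(G,J_{G/H})$ surjects onto $H^3(G,\bZ)\simeq M(G)$ with kernel the image of $H^2(H,\bZ)$; it remains to see that the first $\mathrm{res}$ is surjective. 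Using $H^2(-,\bZ)\simeq H^1(-,\bQ/\bZ)\simeq\mathrm{Hom}(-,\bQ/\bZ)$ (as $G,H$ are finite), this map is $\mathrm{Hom}(-,\bQ/\bZ)$ applied to $H\to G^{\mathrm{ab}}$; the latter is injective because $H\cap[G,G]=\{1\}$, and $\mathrm{Hom}(-,\bQ/\bZ)$ turns injections into surjections, so $\mathrm{res}$ is onto. Hence $H^2(G,J_{G/H})\simeq M(G)$, and since $\Sha^2_\omega(G,J_{G/H})\leq H^2(G,J_{G/H})$ is tautological, this proves the first assertion over any field $k$.

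For the last step, suppose $k$ is a global field. If $M(G)=0$ then $H^2(G,J_{G/H})=0$, hence $\Sha^2_\omega(G,J_{G/H})=0$, and the description of the Hasse norm principle obstruction for $K/k$ by $\Sha^2_\omega(G,J_{G/H})$ recorded in Section~\ref{S3} — which for $K/k$ Galois recovers Tate's Theorem~\ref{thTate}, and in general rests on the work of Voskresenskii and of Drakokhrust--Platonov — then gives the Hasse norm principle for $K/k$. Essentially the whole argument is a diagram chase; the only place the hypothesis genuinely enters is the surjectivity of $\mathrm{res}\colon H^2(G,\bZ)\to H^2(H,\bZ)$, which I expect to be the main point to get right, and what makes it work is the single structural observation that a core-free subgroup of a metacyclic group meets the cyclic normal subgroup trivially — an observation that conveniently supplies both ingredients needed to pin down $H^2(G,J_{G/H})$: the vanishing $H^3(H,\bZ)=0$ (cyclicity of $H$) and the identity $H\cap[G,G]=\{1\}$.
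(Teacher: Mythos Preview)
Your proof is correct and follows essentially the same approach as the paper: the paper's Proposition~\ref{prop4.1} sets up the same five-term piece of the long exact sequence coming from $0\to\bZ\to\bZ[G/H]\to J_{G/H}\to0$ via Shapiro, and its Theorem~\ref{thmain} then verifies, exactly as you do, that $H\cap N=\{1\}$ (using that subgroups of the cyclic $N$ are characteristic, hence normal in $G$) so that $H$ is cyclic and $H\cap[G,G]=\{1\}$, yielding surjectivity of $\mathrm{res}$ and $H^2(G,J_{G/H})\simeq M(G)$. The only differences are organizational: the paper isolates the surjectivity criterion $[G,G]\cap H=[H,H]$ as a separate statement (Proposition~\ref{prop4.1}(0),(3)) applicable beyond the metacyclic case, whereas you streamline directly to the conclusion.
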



We organize this paper as follows. 
In Section \ref{S3}, 
we recall some known results about the Hasse norm principle 
and related birational invariants. 
In Section \ref{S4}, we prove Theorem \ref{thmain0} (Theorem \ref{thmain}). 
In Section \ref{S5}, we give some examples of metacyclic groups $G$ 
with trivial Schur multiplier $M(G)=0$ 
as in Theorem \ref{thmain0} (Theorem \ref{thmain}). 
These provide new examples which 
the Hasse norm principle hold for non-Galois extensions $K/k$ 
whose Galois closure is $L/k$ with metacyclic $G={\rm Gal}(L/k)$ and $M(G)=0$. 
In Section \ref{S6}, by using GAP, we give examples of 
transitive groups $G\leq S_n$ $(2\leq n\leq 30)$ 
as in Theorem \ref{thmain0} (Theorem \ref{thmain}) and 
Proposition \ref{prop4.1}. 
In Section \ref{S7}, by using GAP, 
we also give some examples of Proposition \ref{prop4.1} 
which includes the case where $G$ is not metacyclic. 
In Section \ref{S8}, as an appendix of this paper, 
we also give some examples of finite simple groups $G$ 
with trivial Schur multiplier $M(G)=0$. 
These provide examples which 
the Hasse norm principle hold for Galois extensions $K/k$ 
with $G={\rm Gal}(K/k)$ by Tate's theorem (Theorem \ref{thTate}). 
In Section \ref{GAPcomp}, we give GAP computations of Schur multipliers 
$M(G)$ and $M(H)$ for 
Table $1$, Table $2$, Table $3$ and Table $4$ as in 
Theorem \ref{th6.1}, Theorem \ref{th6.2}, Theorem \ref{th7.1} and 
Theorem \ref{th7.2} respectively. 
In Section \ref{GAPcomp2}, GAP computations of Schur multipliers 
of $M(G)$ for Remark \ref{rem8.2} and Remark \ref{rem8.4} are given. 
The GAP algorithms and related ones can be available as {\tt HNP.gap} 
in \cite{Norm1ToriHNP}.
%

\section{Hasse norm principle and norm one tori}\label{S3}
Let $k$ be a global field
and $\overline{k}$ be a fixed separable closure of $k$. 
Let $T$ be an algebraic $k$-torus, 
i.e. a group $k$-scheme with fiber product (base change) 
$T\times_k \overline{k}=
T\times_{{\rm Spec}\, k}\,{\rm Spec}\, \overline{k}
\simeq (\bG_{m,\overline{k}})^n$; 
$k$-form of the split torus $(\bG_{m,k})^n$. 
%
Let $E$ be a principal homogeneous space (= torsor) under $T$.  
{\it Hasse principle holds for $E$} means that 
if $E$ has a $k_v$-rational point for all completions $k_v$ of $k$, 
then $E$ has a $k$-rational point. 
The set $H^1(k,T)$ classifies all such torsors $E$ up 
to (non-unique) isomorphism. 
We take {\it the Shafarevich-Tate group} of $T$: 
\begin{align*}
\Sha(T)={\rm Ker}\left\{H^1(k,T)\xrightarrow{\rm res} \bigoplus_{v\in V_k} 
H^1(k_v,T)\right\}
\end{align*}
where $V_k$ is the set of all places of $k$ and 
$k_v$ is the completion of $k$ at $v$. 
Then 
Hasse principle holds for all torsors $E$ under $T$ 
if and only if $\Sha(T)=0$.  

Let $K/k$ be a separable field extension with $[K:k]=n$ 
and $L/k$ be the Galois closure of $K/k$. 
Let $G={\rm Gal}(L/k)$ and $H={\rm Gal}(L/K)\leq G$ with $[G:H]=n$. 
Then we have $\bigcap_{\sigma\in G} H^\sigma=\{1\}$ 
where $H^\sigma=\sigma^{-1}H\sigma$ and hence 
$H$ contains no normal subgroup of $G$ except for $\{1\}$. 
We have the exact sequence 
$0\rightarrow \bZ\rightarrow \bZ[G/H]\rightarrow J_{G/H}\rightarrow 0$ 
and ${\rm rank}_{\bZ}\, J_{G/H}=n-1$. 
Write $J_{G/H}=\oplus_{1\leq i\leq n-1}\bZ u_i$. 
We define the action of $G$ on $L(x_1,\ldots,x_{n-1})$ by 
$x_i^{\sigma}=\prod_{j=1}^{n-1} x_j^{a_{i,j}} (1\leq i\leq n-1)$ 
for any $\sigma\in G$, 
when $u_i^{\sigma}=\sum_{j=1}^{n-1} a_{i,j} u_j$ $(a_{i,j}\in\bZ)$. 

Let $T=R^{(1)}_{K/k}(\bG_{m,K})$ be the norm one torus of $K/k$,
i.e. the kernel of the norm map $R_{K/k}(\bG_{m,K})\rightarrow \bG_{m,k}$ 
where 
$R_{K/k}$ is the Weil restriction 
(see Ono \cite[Section 1.4]{Ono61}, Voskresenskii \cite[page 37, Section 3.12]{Vos98}). 
It is biregularly isomorphic to the norm hypersurface 
$f(x_1,\ldots,x_n)=1$ where 
$f\in k[x_1,\ldots,x_n]$ is the polynomial of total 
degree $n$ defined by the norm map $N_{K/k}:K^\times\to k^\times$ 
and has the 
Chevalley module $\widehat{T}\simeq J_{G/H}$ as its character module 
where $J_{G/H}=(I_{G/H})^\circ={\rm Hom}_\bZ(I_{G/H},\bZ)$ 
is the dual lattice of $I_{G/H}={\rm Ker}\, \varepsilon$ and 
$\varepsilon : \bZ[G/H]\rightarrow \bZ$, 
$\sum_{\overline{g}\in G/H} a_{\overline{g}}\,\overline{g}\mapsto \sum_{\overline{g}\in G/H} a_{\overline{g}}$ 
is the augmentation map, 
i.e. the function field $k(T)$ of $T$ is isomorphic to 
$L(x_1,\ldots,x_{n-1})^G$ 
(see Endo and Miyata \cite[Section 1]{EM73}, \cite[Section 1]{EM75} 
and Voskresenskii \cite[Section 4.8]{Vos98}). 

Ono \cite{Ono63} established the relationship 
between the Hasse norm principle for $K/k$ 
and Hasse principle for all torsors $E$ under
the norm one torus $R^{(1)}_{K/k}(\bG_{m,K})$: 
\begin{theorem}[{Ono \cite[page 70]{Ono63}, see also Platonov \cite[page 44]{Pla82}, Kunyavskii \cite[Remark 3]{Kun84}, Platonov and Rapinchuk \cite[page 307]{PR94}}]\label{thOno}
Let $k$ be a global field and $K/k$ be a finite separable field extension. 
Let $\bA_K^\times$ be the idele group of $K$. 
Let $T=R^{(1)}_{K/k}(\bG_{m,K})$ be the norm one torus of $K/k$. 
Then 
\begin{align*}
\Sha(T)\simeq (N_{K/k}(\bA_K^\times)\cap k^\times)/N_{K/k}(K^\times)
\end{align*}
where  $N_{K/k}$ is the norm map. 
In particular, $\Sha(T)=0$ if and only if 
the Hasse norm principle holds for $K/k$. 
\end{theorem}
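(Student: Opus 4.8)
The plan is to derive the isomorphism directly from the defining short exact sequence of the norm one torus, using Shapiro's lemma and Hilbert's Theorem 90 to compute the relevant $H^1$ groups, and then to match the resulting cohomological object against the adelic norm group place by place. First I would write the fundamental exact sequence of $k$-tori
\begin{align*}
1\rightarrow T\rightarrow R_{K/k}(\bG_{m,K})\xrightarrow{N}\bG_{m,k}\rightarrow 1,
\end{align*}
where $N$ is the norm map whose kernel defines $T=R^{(1)}_{K/k}(\bG_{m,K})$. Taking the associated long exact sequence in Galois cohomology over $k$, and using that $R_{K/k}(\bG_{m,K})(k)=K^\times$ and $\bG_{m,k}(k)=k^\times$ with $N$ inducing the field norm $N_{K/k}\colon K^\times\to k^\times$, the key input is the vanishing $H^1(k,R_{K/k}(\bG_{m,K}))\simeq H^1(K,\bG_{m,K})=0$, which follows from Shapiro's lemma together with Hilbert's Theorem 90. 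Exactness then collapses the sequence to an isomorphism $H^1(k,T)\simeq k^\times/N_{K/k}(K^\times)$.

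Next I would repeat this computation over each completion $k_v$. Here $R_{K/k}(\bG_{m,K})(k_v)=(k_v\otimes_k K)^\times=\prod_{w\mid v}K_w^\times$ and the norm becomes $\prod_{w\mid v}N_{K_w/k_v}$, while the same Shapiro--Hilbert~90 vanishing $H^1(k_v,R_{K/k}(\bG_{m,K}))=0$ holds. This yields $H^1(k_v,T)\simeq k_v^\times/\prod_{w\mid v}N_{K_w/k_v}(K_w^\times)$. Since all these identifications are functorial in the base field, the restriction maps $H^1(k,T)\to H^1(k_v,T)$ correspond to the natural maps $k^\times/N_{K/k}(K^\times)\to k_v^\times/\prod_{w\mid v}N_{K_w/k_v}(K_w^\times)$ induced by $k^\times\hookrightarrow k_v^\times$. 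Consequently $\Sha(T)=\ker\{H^1(k,T)\to\bigoplus_v H^1(k_v,T)\}$ is identified with the group of $a\in k^\times$ that are local norms at every place $v$ (that is, $a\in\prod_{w\mid v}N_{K_w/k_v}(K_w^\times)$ for all $v$), taken modulo the global norms $N_{K/k}(K^\times)$.

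Finally I would identify $N_{K/k}(\bA_K^\times)\cap k^\times$ with this same group of universal local norms. The norm of an idele $(\alpha_w)_w\in\bA_K^\times$ is the idele of $k$ with $v$-component $\prod_{w\mid v}N_{K_w/k_v}(\alpha_w)$, so an element $a\in k^\times$ lies in $N_{K/k}(\bA_K^\times)$ exactly when $a$ is a local norm at every place $v$; at almost all $v$ the element $a$ is a unit at an unramified place and is therefore automatically a local norm, so the adelic condition genuinely reduces to the everywhere-local-norm condition. Matching this description against that of $\Sha(T)$ from the previous step gives the isomorphism $\Sha(T)\simeq(N_{K/k}(\bA_K^\times)\cap k^\times)/N_{K/k}(K^\times)$, and the final assertion that $\Sha(T)=0$ if and only if the Hasse norm principle holds is then immediate from the definition of the principle. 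The main obstacle is not any single deep theorem but the careful bookkeeping: verifying that the localization of the connecting homomorphism commutes with restriction, so that the cohomological kernel coincides term-by-term with the norm-theoretic kernel, and handling the product-versus-sum subtlety so that the everywhere-local condition is captured correctly.
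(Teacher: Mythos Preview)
The paper does not give its own proof of this statement; it is recorded as a classical result with citations to Ono, Platonov, Kunyavskii, and Platonov--Rapinchuk, and is used without argument. Your proposal is the standard argument (essentially the one found in the cited references, e.g.\ Platonov--Rapinchuk \cite[\S6.3]{PR94}): take the long exact cohomology sequence of $1\to T\to R_{K/k}(\bG_{m,K})\to\bG_{m,k}\to 1$ over $k$ and over each $k_v$, use Shapiro's lemma and Hilbert~90 to kill $H^1$ of the Weil restriction, and then identify the everywhere-local-norm condition with membership in $N_{K/k}(\bA_K^\times)$. The argument is correct and there is nothing to compare it against in the paper itself.
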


Let $T$ be an algebraic $k$-torus 
and $T(k)$ be the group of $k$-rational points of $T$. 
Then $T(k)$ 
embeds into $\prod_{v\in V_k} T(k_v)$ by the diagonal map 
where 
$V_k$ is the set of all places of $k$ and 
$k_v$ is the completion of $k$ at $v$. 
Let $\overline{T(k)}$ be the closure of $T(k)$  
in the product $\prod_{v\in V_k} T(k_v)$. 
The group 
\begin{align*}
A(T)=\left(\prod_{v\in V_k} T(k_v)\right)/\overline{T(k)}
\end{align*}
is called {\it the kernel of the weak approximation} of $T$. 
We say that {\it $T$ has the weak approximation property} if $A(T)=0$. 

%
\begin{theorem}[{Voskresenskii \cite[Theorem 5, page 1213]{Vos69}, 
\cite[Theorem 6, page 9]{Vos70}, see also \cite[Section 11.6, Theorem, page 120]{Vos98}}]\label{thV}
Let $k$ be a global field, 
$T$ be an algebraic $k$-torus and $X$ be a smooth $k$-compactification of $T$. 
Then there exists an exact sequence
\begin{align*}
0\to A(T)\to H^1(k,{\rm Pic}\,\overline{X})^{\vee}\to \Sha(T)\to 0
\end{align*}
where $M^{\vee}={\rm Hom}(M,\bQ/\bZ)$ is the Pontryagin dual of $M$. 
In particular, if $T$ is retract $k$-rational, then $ H^1(k,{\rm Pic}\,\overline{X})=0$ and hence 
$A(T)=0$ and $\Sha(T)=0$. 
Moreover, if $L$ is the splitting field of $T$ and $L/k$ 
is an unramified extension, then $A(T)=0$ and 
$H^1(k,{\rm Pic}\,\overline{X})^{\vee}\simeq \Sha(T)$. 
\end{theorem}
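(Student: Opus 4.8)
The plan is to realise both $A(T)$ and $\Sha(T)$ as arithmetic invariants of the single $G$-lattice ${\rm Pic}\,\overline X$ (with $G={\rm Gal}(L/k)$ for a finite Galois splitting field $L$ of $T$), and then to read off the three-term sequence from duality. First I would attach to the compactification its fundamental exact sequence of $G$-lattices: writing $\overline D=\overline X\setminus\overline T$ for the boundary, whose irreducible components are permuted by $G$, the localisation sequence for Picard groups together with ${\rm Pic}\,\overline T=0$ and $\overline k[\overline T]^\times/\overline k^\times\simeq\widehat T$ gives
\[
0\to\widehat T\to P\to{\rm Pic}\,\overline X\to 0,
\]
with $P=\bigoplus_i\bZ\,\overline D_i$ a permutation lattice and ${\rm Pic}\,\overline X$ finitely generated and free ($\overline X$ being smooth, projective and rational). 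This realises ${\rm Pic}\,\overline X$ as a \emph{flasque} lattice and the sequence as a flasque resolution of $\widehat T$; dualising into $\bG_m$ turns it into a flasque resolution of the torus,
\[
1\to S\to R\to T\to 1,
\]
with $R$ quasitrivial ($\widehat R=P$) and $S$ a flasque torus of character lattice $\widehat S={\rm Pic}\,\overline X$.

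Next I would exploit that quasitrivial tori are cohomologically transparent: $H^1(k,R)=0$, $H^1(k_v,R)=0$ for all $v$, and $\Sha^2(k,R)=0$ (the last from the Albert--Brauer--Hasse--Noether sequence applied to the factors ${\rm Res}_{k_i/k}\,\bG_m$ of $R$). Feeding $1\to S\to R\to T\to 1$ into Galois cohomology, over $k$ and over each $k_v$, the vanishing of $H^1(-,R)$ lets the connecting map identify $H^1(-,T)$ with $\ker(H^2(-,S)\to H^2(-,R))$ compatibly with localisation; passing to kernels of localisation and using $\Sha^2(k,R)=0$ gives $\Sha(T)=\Sha^1(k,T)\simeq\Sha^2(k,S)$. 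Poitou--Tate duality for $S$ then supplies $\Sha^2(k,S)\simeq\Sha^1(k,\widehat S)^\vee=\Sha^1(k,{\rm Pic}\,\overline X)^\vee$, i.e. the surjection $H^1(k,{\rm Pic}\,\overline X)^\vee\twoheadrightarrow\Sha(T)$ dual to the inclusion $\Sha^1(k,{\rm Pic}\,\overline X)\hookrightarrow H^1(k,{\rm Pic}\,\overline X)$.

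For the remaining term I would compute the weak-approximation defect from the same resolution. As $R$ is quasitrivial it satisfies weak approximation, so comparing the diagonal maps $T(k)\to\prod_v T(k_v)$ with those for $R$ and $S$ expresses $A(T)$ through the localisation of $H^1(k,S)$; dualising place by place by local duality for $S$ and gluing by the reciprocity law should identify $A(T)^\vee$ with the image of ${\rm res}\colon H^1(k,{\rm Pic}\,\overline X)\to\prod_v H^1(k_v,{\rm Pic}\,\overline X)$. Since $\Sha(T)^\vee\simeq\Sha^1(k,{\rm Pic}\,\overline X)=\ker({\rm res})$, this yields the dual exact sequence
\[
0\to\Sha(T)^\vee\to H^1(k,{\rm Pic}\,\overline X)\to A(T)^\vee\to 0,
\]
and taking Pontryagin duals gives exactly $0\to A(T)\to H^1(k,{\rm Pic}\,\overline X)^\vee\to\Sha(T)\to 0$. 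I expect the main obstacle to be this very identification of the geometric defect $A(T)$ with the image of localisation: it requires knowing that the obstruction to weak approximation for $T$ is \emph{algebraic} --- controlled by ${\rm Pic}\,\overline X$ rather than the full Brauer group --- and that properness of $X$ permits passing between the $k_v$-points and the flasque resolution compatibly at all places at once, the local dualities being welded together by global reciprocity.

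Finally the two special cases fall out. If $T$ is retract $k$-rational then the flasque lattice ${\rm Pic}\,\overline X$ is invertible, i.e. a direct summand of a permutation lattice, whence $H^1(k',{\rm Pic}\,\overline X)=0$ for every $k'$; thus $H^1(k,{\rm Pic}\,\overline X)=0$ and both $A(T)$ and $\Sha(T)$ vanish. If $L/k$ is unramified then each decomposition group $G_v$ is cyclic, and for cyclic $G_v$ the Tate periodicity $\widehat H^1(G_v,{\rm Pic}\,\overline X)=\widehat H^{-1}(G_v,{\rm Pic}\,\overline X)$ together with flasqueness forces $H^1(k_v,{\rm Pic}\,\overline X)=0$ for all $v$; hence ${\rm res}=0$, so $A(T)=0$ and $H^1(k,{\rm Pic}\,\overline X)^\vee\simeq\Sha(T)$.
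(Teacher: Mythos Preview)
The paper does not give its own proof of this theorem: it is stated as a citation to Voskresenskii \cite{Vos69}, \cite{Vos70}, \cite{Vos98}, with only the remark ``For the last assertion, see Voskresenskii \cite[Section 11.5]{Vos98}'' afterwards. So there is no in-paper argument to compare against.

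Your sketch is essentially the standard Voskresenskii/Colliot-Th\'el\`ene--Sansuc proof and is correct in outline. The flasque resolution $0\to\widehat T\to P\to{\rm Pic}\,\overline X\to 0$ from the boundary divisors, its dual $1\to S\to R\to T\to 1$, the identification $\Sha(T)\simeq\Sha^2(k,S)\simeq\Sha^1(k,{\rm Pic}\,\overline X)^\vee$ via $\Sha^2(k,R)=0$ and Poitou--Tate, and the two special cases (invertibility of ${\rm Pic}\,\overline X$ under retract rationality; vanishing of $\widehat H^{-1}$ of a flasque lattice over cyclic decomposition groups in the unramified case) are all right.

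The one place you correctly flag as delicate --- the identification of $A(T)$ --- can be made precise without appealing to a general ``algebraicity of the obstruction'' principle. From $1\to S\to R\to T\to 1$ with $R$ quasitrivial one gets $T(k_v)/{\rm im}\,R(k_v)\simeq H^1(k_v,S)$ and $T(k)/{\rm im}\,R(k)\simeq H^1(k,S)$; since $R$ has weak approximation and ${\rm im}\,R(k_v)\subset T(k_v)$ is open, a direct diagram chase gives $A(T)\simeq{\rm coker}\bigl(H^1(k,S)\to\bigoplus_v H^1(k_v,S)\bigr)$ (the sum is finite because $\widehat S$ is flasque). The middle portion of the Poitou--Tate nine-term sequence for the torus $S$ then reads
\[
H^1(k,S)\to\bigoplus_v H^1(k_v,S)\to H^1(k,\widehat S)^\vee\to \Sha^2(k,S)\to 0,
\]
which immediately yields $0\to A(T)\to H^1(k,{\rm Pic}\,\overline X)^\vee\to\Sha(T)\to 0$. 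This is cleaner than the route through ``image of ${\rm res}$'' you propose, and avoids having to argue separately that the local dualities glue.
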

For the last assertion, see Voskresenskii \cite[Section 11.5]{Vos98}. 
It follows that 
$H^1(k,{\rm Pic}\,\overline{X})=0$ if and only if $A(T)=0$ and $\Sha(T)=0$, 
i.e. $T$ has the weak approximation property and 
Hasse principle holds for all torsors $E$ under $T$. 
If $T$ is (stably/retract) $k$-rational, 
then $H^1(k,{\rm Pic}\,\overline{X})=0$ 
(see Voskresenskii \cite[Theorem 5, page 1213]{Vos69},
Manin \cite[Section 30]{Man74}, 
Manin and Tsfasman \cite{MT86} 
and also Hoshi, Kanai and Yamasaki \cite[Section 1]{HKY22}). 
Theorem \ref{thV} was generalized 
to the case of linear algebraic groups by Sansuc \cite{San81}.



Applying Theorem \ref{thV} to $T=R^{(1)}_{K/k}(\bG_{m,K})$,  
it follows from Theorem \ref{thOno} that 
$H^1(k,{\rm Pic}\,\overline{X})=0$ if and only if 
$A(T)=0$ and $\Sha(T)=0$, 
i.e. 
$T$ has the weak approximation property and 
the Hasse norm principle holds for $K/k$. 
In the algebraic language, 
the latter condition $\Sha(T)=0$ means that 
for the corresponding norm hypersurface $f(x_1,\ldots,x_n)=b$, 
it has a $k$-rational point 
if and only if it has a $k_v$-rational point 
for any place $v$ of $k$ where 
$f\in k[x_1,\ldots,x_n]$ is the polynomial of total 
degree $n$ defined by the norm map $N_{K/k}:K^\times\to k^\times$ 
and $b\in k^\times$ 
(see Voskresenskii \cite[Example 4, page 122]{Vos98}).


When $K/k$ is a finite Galois extension, 
we have that: 

\begin{theorem}[{Voskresenskii \cite[Theorem 7]{Vos70}, Colliot-Th\'{e}l\`{e}ne and Sansuc \cite[Proposition 1]{CTS77}}]\label{thVos70}
Let $k$ be a field and 
$K/k$ be a finite Galois extension with Galois group $G={\rm Gal}(K/k)$. 
Let $T=R^{(1)}_{K/k}(\bG_{m,K})$ be the norm one torus of $K/k$ 
and $X$ be a smooth $k$-compactification of $T$. 
Then 
$H^1(H,{\rm Pic}\, X_K)\simeq H^2(H,\widehat{T})\simeq H^3(H,\bZ)$ 
for any subgroup $H$ of $G$. 
In particular, 
$H^1(k,{\rm Pic}\, \overline{X})\simeq
H^1(G,{\rm Pic}\, X_K)\simeq H^2(G,\widehat{T})\simeq H^2(G,J_G)
\simeq H^3(G,\bZ)\simeq M(G)$ where $M(G)$ is the Schur multiplier of $G$. 
\end{theorem}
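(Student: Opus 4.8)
The plan is to establish the two isomorphisms of Theorem \ref{thVos70} for the norm one torus $T=R^{(1)}_{K/k}(\bG_{m,K})$ of a \emph{Galois} extension $K/k$ with group $G$, valid for every subgroup $H\leq G$. The backbone is the exact sequence of $G$-lattices recalled in the excerpt,
\begin{align*}
0\to\bZ\to\bZ[G]\to J_G\to 0,
\end{align*}
which (in the Galois case $H=\{1\}$, so that $G/H=G$ as a $G$-set) identifies the character module $\widehat{T}$ of the norm one torus with $J_G$. First I would record the flasque resolution underlying any smooth $k$-compactification $X$ of $T$: the standard result (Colliot-Th\'el\`ene--Sansuc) produces a short exact sequence of $G$-modules
\begin{align*}
0\to\widehat{T}\to P\to\mathrm{Pic}\,X_K\to 0
\end{align*}
with $P$ a permutation lattice. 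The crucial input is that $\mathrm{Pic}\,X_K$ is the relevant birational invariant and that, restricted to any subgroup $H$, permutation modules remain cohomologically trivial in the sense needed.

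Next I would run the long exact cohomology sequence of the flasque resolution over an arbitrary subgroup $H\leq G$. Since $P$ is a permutation $G$-lattice, its restriction $P|_H$ is again a permutation $H$-lattice, hence $H^1(H,P)=0$; this is the key vanishing that makes the argument go through for \emph{every} $H$, not just for $G$ itself. From the piece
\begin{align*}
H^1(H,P)\to H^1(H,\mathrm{Pic}\,X_K)\to H^2(H,\widehat{T})\to H^2(H,P)
\end{align*}
one does not immediately get an isomorphism; to upgrade $H^1(H,\mathrm{Pic}\,X_K)\hookrightarrow H^2(H,\widehat{T})$ to an isomorphism one uses that the flasque class is also \emph{coflasque} (equivalently, that $\widehat{\mathrm{Pic}\,X_K}$ is flasque so $H^1(H,\cdot)$ of its complement vanishes), forcing $H^2(H,P)\to H^2(H,\mathrm{Pic}\,X_K)$ to behave correctly. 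Concretely I would invoke the dual sequence and the fact that $\mathrm{Pic}\,X_K$ is flasque, giving $\widehat H^1(H,P)=\widehat H^2(H,P)=0$ in Tate cohomology for permutation modules, whence the connecting map $H^1(H,\mathrm{Pic}\,X_K)\xrightarrow{\sim}H^2(H,\widehat{T})$ is an isomorphism.

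It remains to identify $H^2(H,\widehat{T})$ with $H^3(H,\bZ)$. For this I would dualize and restrict the defining sequence: applying the long exact sequence to $0\to\bZ\to\bZ[G]\to J_G\to 0$ restricted to $H$, the permutation module $\bZ[G]|_H$ is cohomologically trivial for $H^i$ with $i\geq 1$ (being free as an $H$-module when $H$ acts freely on cosets, or more generally a sum of induced modules, so Shapiro's lemma kills its higher cohomology after the relevant dimension shift). Thus the connecting homomorphism yields $H^2(H,J_G)\simeq H^3(H,\bZ)$ by dimension shifting; combined with $\widehat{T}\simeq J_G$ this completes the chain $H^1(H,\mathrm{Pic}\,X_K)\simeq H^2(H,\widehat{T})\simeq H^3(H,\bZ)$. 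The final assertion of Theorem \ref{thVos70}, namely $H^1(k,\mathrm{Pic}\,\overline{X})\simeq H^1(G,\mathrm{Pic}\,X_K)\simeq H^3(G,\bZ)\simeq M(G)$, is then the special case $H=G$, using the identification $H^3(G,\bZ)\simeq M(G)$ recorded in the introduction.

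\textbf{Main obstacle.} The delicate point is the \emph{upgrade from injection to isomorphism} for $H^1(H,\mathrm{Pic}\,X_K)\simeq H^2(H,\widehat{T})$: one must verify that $\mathrm{Pic}\,X_K$ is genuinely flasque (so that its restriction to every subgroup $H$ stays flasque and the relevant $H^1$ and $H^2$ of the permutation term vanish in Tate cohomology). This is exactly where the smoothness and completeness of the $k$-compactification $X$ enter, and where I would lean hardest on the Colliot-Th\'el\`ene--Sansuc structure theory rather than on a direct computation.
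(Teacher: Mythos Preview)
The paper does not prove Theorem \ref{thVos70}; it is quoted from Voskresenskii and Colliot-Th\'el\`ene--Sansuc. So the comparison is between your sketch and what a correct argument actually requires.

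Your treatment of the second isomorphism $H^2(H,\widehat{T})\simeq H^3(H,\bZ)$ is fine: restrict $0\to\bZ\to\bZ[G]\to J_G\to 0$ to $H$, note that $\bZ[G]|_H$ is a free $\bZ[H]$-module, and dimension-shift.

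The gap is in the first isomorphism. From the flasque resolution $0\to J_G\to P\to \mathrm{Pic}\,X_K\to 0$ you correctly get an injection $H^1(H,\mathrm{Pic}\,X_K)\hookrightarrow H^2(H,J_G)$ using $H^1(H,P)=0$. But your upgrade to an isomorphism is wrong: you assert $\widehat H^2(H,P)=0$ for permutation modules, and this is false (already $\widehat H^2(H,\bZ)\simeq H^{ab}$). Flasqueness of $\mathrm{Pic}\,X_K$ gives $\widehat H^{-1}$-vanishing, not $H^1$-vanishing, so it does not help here either. In fact the isomorphism $H^1(H,\mathrm{Pic}\,X_K)\simeq H^2(H,\widehat{T})$ \emph{fails} for general tori (e.g.\ quasi-split $T$ has $\mathrm{Pic}\,X_K$ stably trivial while $H^2(H,\widehat{T})$ need not vanish); what holds in general is only $H^1(H,\mathrm{Pic}\,X_K)\simeq \Sha^2_\omega(H,\widehat{T})$.

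The missing idea is that the norm one torus of a \emph{Galois} extension is special: by your own dimension shift, $H^2(C,J_G)\simeq H^3(C,\bZ)=0$ for every \emph{cyclic} $C\leq H$, hence the restriction map $H^2(H,J_G)\to\bigoplus_{C\ \text{cyclic}}H^2(C,J_G)$ is identically zero and $\Sha^2_\omega(H,J_G)=H^2(H,J_G)$. Combined with the general identification $H^1(H,\mathrm{Pic}\,X_K)\simeq\Sha^2_\omega(H,J_G)$ this yields $H^1(H,\mathrm{Pic}\,X_K)\simeq H^2(H,J_G)\simeq H^3(H,\bZ)$. So the surjectivity you were trying to force from properties of $P$ actually comes from a property of $J_G$, namely the vanishing of $H^3(C,\bZ)$ for cyclic $C$.
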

In other words, for the $G$-lattice $J_G\simeq \widehat{T}$, 
$H^1(H,[J_G]^{fl})\simeq H^3(H,\bZ)$ for any subgroup $H$ of $G$ 
and $H^1(G,[J_G]^{fl})\simeq H^3(G,\bZ)\simeq H^2(G,\bQ/\bZ)\simeq M(G)$. 
By the exact sequence $0\to\bZ\to\bZ[G]\to J_G\to 0$, 
we also have $\delta:H^1(G,J_G)\simeq H^2(G,\bZ)\simeq H^1(G,\bQ/\bZ)\simeq G^{ab}$ 
where $\delta$ is the connecting homomorphism and 
$G^{ab}:=G/[G,G]$ is the abelianization of $G$ 
where $[G,G]$ is the commutator subgroup of $G$. 


By Poitou-Tate duality (see Milne \cite[Theorem 4.20]{Mil86}, 
Platonov and Rapinchuk \cite[Theorem 6.10]{PR94}, 
Neukirch, Schmidt and Wingberg \cite[Theorem 8.6.8, page 422]{NSW00}, 
Harari \cite[Theorem 17.13]{Har20}), 
we also have 
\begin{align*}
\Sha(T)^\vee\simeq\Sha^2(G,\widehat{T})
\end{align*}
where $\Sha(T)^\vee={\rm Hom}(\Sha(T),\bQ/\bZ)$ and 
\begin{align*}
\Sha^i(G,\widehat{T})={\rm Ker}\left\{H^i(G,\widehat{T})\xrightarrow{\rm res} \bigoplus_{v\in V_k} 
H^i(G_v,\widehat{T})\right\}\quad (i\geq 1)
\end{align*}
is {\it the $i$-th Shafarevich-Tate group} 
of $\widehat{T}={\rm Hom}(T,\bG_{m,K})$, 
$G={\rm Gal}(K/k)$ and $K$ is the minimal splitting field of $T$. 
Note that $\Sha(T)\simeq \Sha^1(G,T)\simeq \Sha^2(G,\widehat{T})$. 
In the special case where 
$T=R^{(1)}_{K/k}(\bG_{m,K})$ and $K/k$ is Galois with $G={\rm Gal}(K/k)$, 
we have 
$H^2(G,\widehat{T})=H^2(G,J_{G})\simeq H^3(G,\bZ)$ and hence 
we get Tate's theorem (Theorem \ref{thTate}) 
via Ono's theorem (Theorem \ref{thOno}). 


Let 
$M$ be a $G$-lattice. We define 
\begin{align*}
\Sha^i_\omega(G,M):={\rm Ker}\left\{H^i(G,M)\xrightarrow{{\rm res}}\bigoplus_{H\leq G:{\rm\, cyclic}}
H^i(H,M)\right\}\quad (i\geq 1) .
\end{align*}
Note that ``$\Sha^i_\omega$'' corresponds to 
the unramified part of ``$\Sha^i$'' because 
if $v\in V_k$ is unramified, then $G_v\simeq C_n$ and 
all the cyclic subgroups of $G$ appear as $G_v$ 
from the Chebotarev density theorem. 
%
\begin{theorem}[{Colliot-Th\'{e}l\`{e}ne and Sansuc \cite[Proposition 9.5 (ii)]{CTS87}, see also \cite[Proposition 9.8]{San81}, \cite[page 98]{Vos98}, \cite[Corollaire 1]{CTHS05}, \cite[Theorem 2.3]{BP20}}]\label{thCTS87}
Let $k$ be a field with ${\rm char}\, k=0$
and $K/k$ be a finite Galois extension with Galois group $G={\rm Gal}(K/k)$. 
Let $T$ be an algebraic $k$-torus which splits over $K$ and 
$X$ be a smooth $k$-compactification of $T$. 
Then we have 
\begin{align*}
\Sha^2_\omega(G,\widehat{T})\simeq 
H^1(G,{\rm Pic}\, X_K)\simeq {\rm Br}(X)/{\rm Br}(k)
\end{align*}
where 
${\rm Br}(X)$ is the \'etale cohomological Brauer Group of $X$ 
$($it is the same as the Azumaya-Brauer group of $X$ 
for such $X$, see \cite[page 199]{CTS87}$)$. 
\end{theorem}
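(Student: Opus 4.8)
The plan is to split the two claimed isomorphisms and attack them with different tools. The right-hand isomorphism ${\rm Br}(X)/{\rm Br}(k)\simeq H^1(G,{\rm Pic}\, X_K)$ is a general fact about smooth projective geometrically rational varieties with a rational point, while the left-hand isomorphism $\Sha^2_\omega(G,\widehat{T})\simeq H^1(G,{\rm Pic}\, X_K)$ is a purely cohomological statement that I would extract from a flasque resolution of $\widehat{T}$. I treat these in turn and then remark on where ${\rm char}\, k=0$ is genuinely needed.

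For the Brauer side, first I would note that $T\subseteq X$ is open dense and contains the $k$-rational identity, so $X(k)\neq\emptyset$. Running the Hochschild--Serre spectral sequence $H^p(G,H^q(X_K,\bG_m))\Rightarrow H^{p+q}(X,\bG_m)$ in \'etale cohomology, with $H^0(X_K,\bG_m)=K^\times$, $H^1(X_K,\bG_m)={\rm Pic}\, X_K$, and Hilbert 90 ($H^1(G,K^\times)=0$), yields the standard exact sequence whose relevant portion, once the $k$-point splits off the constant term, reads $0\to{\rm Br}(k)\to {\rm Br}_1(X)\to H^1(G,{\rm Pic}\, X_K)\to 0$, where ${\rm Br}_1(X)=\ker({\rm Br}(X)\to{\rm Br}(X_{\overline{k}}))$. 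Here is exactly where ${\rm char}\, k=0$ enters: $X_{\overline{k}}$ is a smooth projective rational variety, so ${\rm Br}(X_{\overline{k}})=0$, whence ${\rm Br}_1(X)={\rm Br}(X)$ and ${\rm Br}(X)/{\rm Br}(k)\simeq H^1(G,{\rm Pic}\, X_K)$. Since $T$ splits over $K$, the $G$-module ${\rm Pic}\, X_{\overline{k}}$ coincides with ${\rm Pic}\, X_K$ and the $\mathrm{Gal}(\overline{k}/K)$-action is trivial, so inflation identifies $H^1(k,{\rm Pic}\, X_{\overline{k}})$ with $H^1(G,{\rm Pic}\, X_K)$.

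For the $\Sha^2_\omega$ side, I would invoke the structural fact that for a smooth $k$-compactification the lattice ${\rm Pic}\, X_K$ is flasque and fits in the fundamental exact sequence of $G$-lattices $0\to\widehat{T}\xrightarrow{i} P\xrightarrow{\pi}{\rm Pic}\, X_K\to 0$, where $P=\bigoplus_j\bZ[G/G_j]$ is the permutation module on the boundary components of $X_K\setminus T_K$ (using $K[T_K]^\times/K^\times\simeq\widehat{T}$ and ${\rm Pic}\, T_K=0$). Taking $G$-cohomology gives $H^1(G,P)\xrightarrow{\pi_*}H^1(G,{\rm Pic}\, X_K)\xrightarrow{\partial}H^2(G,\widehat{T})\xrightarrow{i_*}H^2(G,P)$, and I would prove the iso by showing that $\partial$ is injective with image $\Sha^2_\omega(G,\widehat{T})$. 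Injectivity follows from $H^1(G,P)=\bigoplus_j H^1(G_j,\bZ)=0$, since ${\rm Hom}(G_j,\bZ)=0$. For the image, I use two inputs: first, flasqueness gives $H^1(C,{\rm Pic}\, X_K)=\widehat{H}^1(C,{\rm Pic}\, X_K)\simeq\widehat{H}^{-1}(C,{\rm Pic}\, X_K)=0$ for every cyclic $C\leq G$ by Tate periodicity, so the local connecting map vanishes and $i_*\colon H^2(C,\widehat{T})\to H^2(C,P)$ is injective; second, the key lemma $\Sha^2_\omega(G,P)=0$ for permutation modules. Granting these, compatibility of $i_*$ with restriction gives $\ker(i_*)=\Sha^2_\omega(G,\widehat{T})$: if $i_*(x)=0$ then each ${\rm res}^G_C(x)$ lies in the (trivial) kernel of the injective local $i_*$, so $x\in\Sha^2_\omega$; conversely if $x\in\Sha^2_\omega$ then $i_*(x)\in\Sha^2_\omega(G,P)=0$. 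With exactness $\mathrm{Im}\,\partial=\ker i_*$ this yields $\partial\colon H^1(G,{\rm Pic}\, X_K)\xrightarrow{\sim}\Sha^2_\omega(G,\widehat{T})$.

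The key lemma $\Sha^2_\omega(G,P)=0$ I would prove by Shapiro and Mackey: $H^2(G,\bZ[G/G_0])\simeq H^2(G_0,\bZ)\simeq{\rm Hom}(G_0,\bQ/\bZ)$, and restriction to $C=\langle h\rangle$ for $h\in G_0$ recovers the value $\chi(h)$ on the identity double coset; a character of $G_0$ vanishing on every cyclic subgroup is zero, so restriction to cyclic subgroups is already injective. The main obstacle is not this cohomological bookkeeping but the two geometric inputs I am importing, namely the flasqueness of ${\rm Pic}\, X_K$ together with the permutation structure of the boundary (the Colliot-Th\'el\`ene--Sansuc description of a smooth compactification) and the vanishing ${\rm Br}(X_{\overline{k}})=0$, which is precisely where characteristic zero is indispensable. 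Once these are in place the two isomorphisms assemble into the stated chain.
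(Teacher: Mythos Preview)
The paper does not give its own proof of this theorem: it is stated as a citation to Colliot-Th\'el\`ene--Sansuc \cite{CTS87} (and the other references listed), and is followed only by the remark that it amounts to the chain $H^1(k,{\rm Pic}\,\overline{X})\simeq H^1(G,{\rm Pic}\,X_K)\simeq H^1(G,[\widehat{T}]^{fl})\simeq\Sha^2_\omega(G,\widehat{T})\simeq{\rm Br}(X)/{\rm Br}(k)$. So there is nothing to compare against; your write-up is supplying an argument that the paper deliberately outsources.

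Your treatment of the $\Sha^2_\omega$ side is correct and is exactly the standard argument: the short exact sequence $0\to\widehat{T}\to P\to{\rm Pic}\,X_K\to 0$ with $P$ permutation and ${\rm Pic}\,X_K$ flasque, the vanishing $H^1(G,P)=0$, the cyclic injectivity of $i_*$ from flasqueness, and the lemma $\Sha^2_\omega(G,P)=0$ via Shapiro all combine to give $\partial\colon H^1(G,{\rm Pic}\,X_K)\xrightarrow{\sim}\Sha^2_\omega(G,\widehat{T})$. This matches the paper's parenthetical identification $H^1(G,{\rm Pic}\,X_K)\simeq H^1(G,[\widehat{T}]^{fl})$.

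On the Brauer side your strategy is right but the execution has a mismatch. You run Hochschild--Serre for the finite cover $X_K\to X$, which produces the relative Brauer group $H^2(G,K^\times)={\rm Br}(K/k)$ on the left and $\ker\bigl({\rm Br}(X)\to{\rm Br}(X_K)\bigr)$ in the middle, not ${\rm Br}(k)$ and ${\rm Br}_1(X)$; and the map to $H^3(G,K^\times)$ is not obviously zero. The clean fix is to do what you already do at the end of that paragraph: run Hochschild--Serre for $\overline{k}/k$ instead, use the $k$-point to split off ${\rm Br}(k)$ and to kill the edge map into $H^3(k,\bG_m)$, use ${\rm Br}(X_{\overline{k}})=0$ to get ${\rm Br}(X)/{\rm Br}(k)\simeq H^1(k,{\rm Pic}\,X_{\overline{k}})$, and only then pass from $\overline{k}$ to $K$ via the observation that ${\rm Gal}(\overline{k}/K)$ acts trivially on ${\rm Pic}\,X_{\overline{k}}$ so that inflation gives $H^1(k,{\rm Pic}\,X_{\overline{k}})\simeq H^1(G,{\rm Pic}\,X_K)$. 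With that reordering your argument is complete.
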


In other words, 
we have 
$H^1(k,{\rm Pic}\, \overline{X})\simeq H^1(G,{\rm Pic}\, X_K)\simeq 
H^1(G,[\widehat{T}]^{fl})\simeq \Sha^2_\omega(G,\widehat{T})\simeq {\rm Br}(X)/{\rm Br}(k)$. 
We also see  
${\rm Br}_{\rm nr}(k(X)/k)={\rm Br}(X)\subset {\rm Br}(k(X))$ 
(see Saltman \cite[Proposition 10.5]{Sal99}, 
Colliot-Th\'{e}l\`{e}ne \cite[Theorem 5.11]{CTS07}, 
Colliot-Th\'{e}l\`{e}ne and Skorobogatov 
\cite[Proposition 6.2.7]{CTS21}).
Moreover, by taking the duals of Voskresenskii's exact sequence as in Theorem \ref{thV}, 
we get the following exact sequence
\begin{align*}
0\to \Sha^2(G,\widehat{T})\to \Sha^2_\omega(G,\widehat{T})\to A(T)^\vee\to 0
\end{align*}
where the map $\Sha^2(G,\widehat{T})\to \Sha^2_\omega(G,\widehat{T})$ 
is the natural inclusion arising from the Chebotarev density theorem 
(see also Macedo and Newton \cite[Proposition 2.4]{MN22}). 

For norm one tori $T=R^{(1)}_{K/k}(\bG_{m,K})$, 
we also obtain the group $T(k)/R$ of $R$-equivalence classes 
over a local field $k$ via 
$T(k)/R\simeq H^1(k,{\rm Pic}\,\overline{X})\simeq 
H^1(G,[\widehat{T}]^{fl})$ 
(see Colliot-Th\'{e}l\`{e}ne and Sansuc \cite[Corollary 5, page 201]{CTS77}, 
Voskresenskii \cite[Section 17.2]{Vos98} and Hoshi, Kanai and Yamasaki \cite[Section 7, Application 1]{HKY22}). 
%

For norm one tori $T=R^{(1)}_{K/k}(\bG_{m,K})$, 
recall that 
the function field $k(T)\simeq L(J_{G/H})^G$ 
for the character module $\widehat{T}={\rm Hom}(T,\bG_{m,L})\simeq J_{G/H}$ and hence we have: 
\begin{align*}
&\hspace*{4.3mm}[J_{G/H}]^{fl}=0\hspace*{10mm}
\ \ \Rightarrow\ \ [J_{G/H}]^{fl}\ \textrm{is\ invertible}\, 
\ \ \Rightarrow\ \  H^1(G,[J_{G/H}]^{fl})=0\\
&\hspace*{14mm}\Updownarrow\hspace*{41mm} \Updownarrow\hspace*{41mm}
\Downarrow\\
&\hspace*{1mm}T\ \textrm{is\ stably}\ k\textrm{-rational}\ \ \hspace*{0mm}
\Rightarrow\ \ \hspace*{0mm}T\ \textrm{is\ retract}\ k\textrm{-rational}\ 
\Rightarrow\ \  A(T)=0\ \textrm{and}\ \Sha(T)=0\
\end{align*}
where the last implication holds over a global field $k$ 
(see Theorem \ref{thV}, 
see also Colliot-Th\'{e}l\`{e}ne and Sansuc \cite[page 29]{CTS77}). 
The last conditions mean that 
$T$ has the weak approximation property and 
the Hasse norm principle holds for $K/k$ as above.  
In particular, it follows that 
$[J_{G/H}]^{fl}$ is invertible, i.e. $T$ is retract $k$-rational, 
and hence $A(T)=0$ and $\Sha(T)=0$ when $G=pTm\leq S_p$ is a transitive 
subgroup of $S_p$ of prime degree $p$ 
and $H=G\cap S_{p-1}\leq G$ with $[G:H]=p$ (see 
Colliot-Th\'{e}l\`{e}ne and Sansuc \cite[Proposition 9.1]{CTS87} and 
\cite[Lemma 2.17]{HY17}). 
Hence the Hasse norm principle holds for $K/k$ when $[K:k]=p$. 

Recall that, by Tate's theorem (Theorem \ref{thTate}), 
the Hasse norm principle holds for $K/k$ 
if and only if the restriction map 
$H^3(G,\bZ)\xrightarrow{\rm res}\bigoplus_{v\in V_k}H^3(G_v,\bZ)$ 
is injective. 
This is also equivalent to $\Sha(T)=0$ 
by Ono's theorem (Theorem \ref{thOno}) 
via $\widehat{T}\simeq J_{G}$ and 
$\Sha(T)^\vee\simeq \Sha^1(G,T)^\vee\simeq\Sha^2(T,\widehat{T})$ 
and $H^2(G,\widehat{T})=H^2(G,J_{G})\simeq H^3(G,\bZ)$ 
where $T=R^{(1)}_{K/k}(\bG_{m,K})$. 
Note also that 
$\Sha(T)=0$ also follows 
from the retract $k$-rationality of $T=R^{(1)}_{K/k}(\bG_{m,K})$ 
when all the Sylow subgroups of $G$ is cyclic 
due to Endo and Miyata \cite[Theorem 2.3]{EM75}. 
For the rationality problem for norm one tori $T=R^{(1)}_{K/k}$, see e.g. 
\cite{EM75}, \cite{CTS77}, \cite{Hur84}, \cite{CTS87}, 
\cite{LeB95}, \cite{CK00}, \cite{LL00}, \cite{Flo}, \cite{End11}, 
\cite{HY17}, \cite{HHY20}, \cite{HY21}, \cite{HY24}, \cite{HY1}, \cite{HY2}. 


\section{Proof of Theorem \ref{thmain0} (Theorem \ref{thmain})}\label{S4}

Let $k$ be a field, 
$K/k$ be a finite separable field extension 
and $L/k$ be the Galois closure of $K/k$ with Galois groups 
$G={\rm Gal}(L/k)$ and $H={\rm Gal}(L/K)\lneq G$.  
Then we have $\bigcap_{\sigma\in G} H^\sigma=\{1\}$ 
where $H^\sigma=\sigma^{-1}H\sigma$ and hence 
$H$ contains no normal subgroup of $G$ except for $\{1\}$. 

Let $Z(G)$ be the center of $G$, 
$[a,b]:=a^{-1}b^{-1}ab$ be the commutator of $a,b\in G$, 
$[G,G]:=\langle [a,b]\mid a,b\in G\rangle$ be the commutator subgroup of $G$ 
and $G^{ab}:=G/[G,G]$ be the abelianization of $G$, i.e. the maximal abelian quotient of $G$. 
Let $M(G)=H^2(G,\bC^\times)\simeq H^2(G,\bQ/\bZ)\xrightarrow[\sim]{\delta} 
H^3(G,\bZ)$ be the Schur multiplier of $G$ 
where $\delta$ is the connecting homomorphism 
(see e.g. Neukirch, Schmidt and Wingberg \cite[Chapter I, \S 3, page 26]{NSW00}). 

\begin{proposition}\label{prop4.1}
Let $k$ be a field, $K/k$ be a finite 
separable field extension and $L/k$ be the Galois closure of $K/k$ with 
Galois groups $G={\rm Gal}(L/k)$ and $H={\rm Gal}(L/K)\lneq G$. 
Then we have an exact sequence 
\begin{align*}
H^1(G,\bQ/\bZ)\simeq G^{ab}
&\xrightarrow{\rm res} 
H^1(H,\bQ/\bZ)\simeq H^{ab}\to H^2(G,J_{G/H})\\
&\xrightarrow{~\delta~} 
H^3(G,\bZ)\simeq M(G)\xrightarrow{\rm res} H^3(H,\bZ)\simeq M(H)
\end{align*}
where $\delta$ is the connecting homomorphism.\\ 
{\rm (0)} 
$H^1(G,\bQ/\bZ)\simeq G^{ab}
\xrightarrow{\rm res}H^1(H,\bQ/\bZ)\simeq H^{ab}$ 
is surjective if and only if $[G,G]\cap H=[H,H]$;\\
{\rm (1)} 
If $[G,G]\cap H=[H,H]$ and $M(G)=0$, 
then $\Sha^2_\omega(G,J_{G/H})\leq H^2(G,J_{G/H})$ $\xrightarrow[\sim]{\delta}$ $H^3(G,\bZ)\simeq M(G)=0$;\\ 
{\rm (2)} 
If $[G,G]\cap H=[H,H]$ and $M(H)=0$, 
then $\Sha^2_\omega(G,J_{G/H})\leq H^2(G,J_{G/H})$ $\xrightarrow[\sim]{\delta}$ $H^3(G,\bZ)\simeq M(G)$;\\
{\rm (3)} If there exists $H^\prime$ $\lhd$ $G$ such that 
$G/H^\prime$ is abelian and $H^\prime$ $\cap$ $H=\{1\}$, then 
$[G,G]\cap H=[H,H]$. 
In particular, 
if $M(G)=0$ $($resp. $M(H)=0$$)$, then 
$\Sha^2_\omega(G,J_{G/H})\leq H^2(G,J_{G/H})$ $\xrightarrow[\sim]{\delta}$ $H^3(G,\bZ)\simeq M(G)=0$  
$($resp.  $\Sha^2_\omega(G,J_{G/H})\leq H^2(G,J_{G/H})$ $\xrightarrow[\sim]{\delta}$ $H^3(G,\bZ)\simeq M(G)$$)$. 

When $k$ is a global field, $\Sha^2_\omega(G,J_{G/H})=0$ implies that 
$A(T)=0$, i.e. 
$T$ has the weak approximation property, 
and $\Sha(T)=0$, i.e. the Hasse norm principle holds for $K/k$ 
$($that is, Hasse principle holds for all torsors $E$ under $T$$)$ 
where $T=R^{(1)}_{K/k}(\bG_{m,K})$ is the norm one torus of $K/k$ 
$($see Section \ref{S3} and Ono's theorem $($Theorem \ref{thOno}$))$. 
\end{proposition}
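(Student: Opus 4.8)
The plan is to establish the long exact cohomology sequence first, then peel off the four numbered assertions, each of which is a short diagram chase.

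\textbf{Step 1: The exact sequence.} Starting from the defining short exact sequence of $G$-lattices $0\to\bZ\to\bZ[G/H]\to J_{G/H}\to 0$, I would take the long exact sequence in $G$-cohomology. The key input is Shapiro's lemma: $H^i(G,\bZ[G/H])\simeq H^i(H,\bZ)$ for all $i$, and under this identification the map $H^i(G,\bZ)\to H^i(G,\bZ[G/H])$ becomes restriction $H^i(G,\bZ)\xrightarrow{\rm res}H^i(H,\bZ)$. Plugging in $i=1,2,3$ and using $H^1(G,\bZ)=H^1(H,\bZ)=0$ (so the sequence actually begins at $H^1(-,\bQ/\bZ)$ after shifting, or equivalently at $H^2(-,\bZ)\simeq H^1(-,\bQ/\bZ)$ via the Bockstein), together with $H^2(G,\bZ)\simeq H^1(G,\bQ/\bZ)\simeq G^{ab}$, $H^3(G,\bZ)\simeq M(G)$, yields exactly
\[
G^{ab}\xrightarrow{\rm res}H^{ab}\to H^2(G,J_{G/H})\xrightarrow{\delta}M(G)\xrightarrow{\rm res}M(H).
\]

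\textbf{Step 2: Part (0).} The restriction map $G^{ab}\to H^{ab}$ is dual (via Pontryagin duality on $\bQ/\bZ$-coefficients, or directly) to the transfer/corestriction, but more elementarily: $G^{ab}\to H^{ab}$ here is the map induced by... actually I would argue directly with the images. The image of $G\to H^{ab}=H/[H,H]$ (which is what $G^{ab}\to H^{ab}$ amounts to when one realizes both as Hom's into $\bQ/\bZ$ — careful, restriction of characters from $G$ to $H$ is dual to inclusion-induced $H^{ab}\to G^{ab}$, so I should instead phrase (0) in terms of the cokernel). Concretely: a character of $H$ extends to $G$ iff it is trivial on $H\cap[G,G]$; so surjectivity of ${\rm res}:G^{ab}\to H^{ab}$ holds iff every character of $H$ is trivial on $H\cap[G,G]$, i.e. iff $[H,H]\supseteq H\cap[G,G]$; the reverse inclusion $[H,H]\subseteq H\cap[G,G]$ is automatic. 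This gives the iff.

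\textbf{Step 3: Parts (1), (2), (3).} For (1): if ${\rm res}:G^{ab}\to H^{ab}$ is surjective (equivalently $[G,G]\cap H=[H,H]$ by (0)) then by exactness the map $H^{ab}\to H^2(G,J_{G/H})$ is zero, so $\delta:H^2(G,J_{G/H})\hookrightarrow M(G)$ is injective; if moreover $M(G)=0$ then $H^2(G,J_{G/H})=0$, and since $\Sha^2_\omega(G,J_{G/H})\leq H^2(G,J_{G/H})$ it too vanishes. For (2): the same surjectivity makes $\delta$ injective; combined with exactness at $M(G)$, $\delta$ maps $H^2(G,J_{G/H})$ isomorphically onto $\ker({\rm res}:M(G)\to M(H))$, which equals all of $M(G)$ when $M(H)=0$. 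For (3): given $H'\lhd G$ with $G/H'$ abelian and $H'\cap H=\{1\}$, then $[G,G]\subseteq H'$, so $[G,G]\cap H\subseteq H'\cap H=\{1\}$, hence $[G,G]\cap H=\{1\}$; but also $[H,H]\subseteq[G,G]\cap H$, forcing $[H,H]=\{1\}$ and $[G,G]\cap H=\{1\}=[H,H]$ — wait, this only shows it when $H$ is abelian. I should instead argue: $[H,H]\subseteq H$ and $[H,H]\subseteq[G,G]\subseteq H'$; is $[H,H]\subseteq H'\cap H=\{1\}$? That again says $H$ abelian. The correct general statement must be that $H'\cap H=\{1\}$ with $H'\supseteq[G,G]$ forces $[G,G]\cap H \subseteq H' \cap H=\{1\}=[H,H]$ only in the abelian case; so probably the intended hypothesis yields $H\hookrightarrow G/H'$ which is abelian, hence $H$ is abelian, hence $[H,H]=\{1\}$, and then indeed $[G,G]\cap H\subseteq H'\cap H=\{1\}=[H,H]$. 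That works. The final clause then follows by feeding (3) into (1) and (2). The concluding sentence about global $k$ is immediate from Theorem \ref{thV} and the displayed exact sequence $0\to\Sha^2(G,\widehat T)\to\Sha^2_\omega(G,\widehat T)\to A(T)^\vee\to 0$ together with Ono's theorem (Theorem \ref{thOno}), since $\widehat T\simeq J_{G/H}$ and $\Sha(T)\simeq\Sha^2(G,\widehat T)$.

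\textbf{Main obstacle.} The only real subtlety is bookkeeping the direction of the restriction/corestriction maps on abelianizations and making sure the identification $H^i(G,\bZ[G/H])\simeq H^i(H,\bZ)$ carries the connecting-map data correctly (so that the composite $H^2(G,\bZ)\to H^2(G,\bZ[G/H])$ really is ${\rm res}$); once that is pinned down, everything else is a routine exact-sequence chase. I would double-check part (3) carefully since, as noted above, it genuinely uses that $H$ embeds into the abelian group $G/H'$ and is therefore abelian — this is the one place where a naive chase goes wrong.
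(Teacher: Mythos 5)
Your proposal is correct and follows essentially the same route as the paper: the long exact sequence for $0\to\bZ\to\bZ[G/H]\to J_{G/H}\to 0$ with Shapiro's lemma, the character-theoretic criterion for (0), the diagram chases for (1)--(2), and the observation in (3) that $H'\supseteq[G,G]$ and $H'\cap H=\{1\}$ force $[G,G]\cap H=\{1\}=[H,H]$ (your worry there resolves itself: $[H,H]\subseteq[G,G]\cap H\subseteq H'\cap H=\{1\}$, so $H$ being abelian is a conclusion, not an extra hypothesis, exactly as in the paper). The only cosmetic difference is that for the global-field clause you invoke the dual sequence $0\to\Sha^2\to\Sha^2_\omega\to A(T)^\vee\to 0$ plus Ono's theorem, while the paper cites Theorems \ref{thV} and \ref{thCTS87} directly; these are equivalent formulations from Section \ref{S3}.
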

\begin{proof}
By the definition, we have $0\to \bZ\to \bZ[G/H]\to J_{G/H}\to 0$ where 
$J_{G/H}\simeq \widehat{T}={\rm Hom}(T,\bG_{m,L})$ 
and $T=R^{(1)}_{K/k}(\bG_{m,K})$ is the norm one torus of $K/k$. 
Then we get 
\begin{align*}
H^2(G,\bZ)\to H^2(G,\bZ[G/H])\to H^2(G,J_{G/H})
\xrightarrow{\delta} H^3(G,\bZ)\to H^3(G,\bZ[G/H]). 
\end{align*} 
Then $H^2(G,\bZ)\simeq H^1(G,\bQ/\bZ)
={\rm Hom}(G,\bQ/\bZ)\simeq G^{ab}=G/[G,G]$. 
We have 
$H^2(G,\bZ[G/H])\simeq H^2(H,\bZ)\simeq H^{ab}$ 
and $H^3(G,\bZ[G/H])\simeq H^3(H,\bZ)\simeq M(H)$ by Shapiro's lemma 
(see e.g. Brown \cite[Proposition 6.2, page 73]{Bro82}, Neukirch, Schmidt and Wingberg \cite[Proposition 1.6.3, page 59]{NSW00}). 
This implies that 
\begin{align*}
H^1(G,\bQ/\bZ)\simeq G^{ab}
&\xrightarrow{\rm res} 
H^1(H,\bQ/\bZ)\simeq H^{ab}\to H^2(G,J_{G/H})\\
&\xrightarrow{~\delta~} 
H^3(G,\bZ)\simeq M(G)\xrightarrow{\rm res} H^3(H,\bZ)\simeq M(H). 
\end{align*}
(0) We see that ${\rm Image}\{H^1(G,\bQ/\bZ)\simeq G^{ab}=G/[G,G]\xrightarrow{\rm res} 
H^1(H,\bQ/\bZ)\simeq H^{ab}=H/[H,H]\}=H/([G,G]\cap H)$. 
Hence 
$H^1(G,\bQ/\bZ)\simeq G^{ab}
\xrightarrow{\rm res}H^1(H,\bQ/\bZ)\simeq H^{ab}$ 
is surjective if and only if $[G,G]\cap H=[H,H]$.\\
(1) If $[G,G]\cap H=[H,H]$, then by (0) we have 
$H^2(G,\bZ)\simeq G^{ab}\xrightarrow{\rm res}H^2(H,\bZ)\simeq H^{ab}$ 
is surjective. 
This implies that $H^2(G,J_{G/H})\xrightarrow{\delta}H^3(G,\bZ)\simeq M(G)=0$ becomes isomorphic.\\ 
(2) 
If $[G,G]\cap H=[H,H]$, then by (0) we have 
$H^2(G,\bZ)\simeq G^{ab}\xrightarrow{\rm res}H^2(H,\bZ)\simeq H^{ab}$ 
is surjective. 
Hence if we also have $M(H)=0$, 
then $H^2(G,J_{G/H})\xrightarrow{\delta}H^3(G,\bZ)\simeq M(G)$ 
becomes isomorphic.\\
(3) Because $G/H^\prime$ is abelian, we have $[G,G]\leq H^\prime$. 
It follows from $H^\prime\cap H=\{1\}$ that $[G,G]\cap H=\{1\}$. 
Hence if we take $h\in H$, then 
$\widetilde{h}\in H/(H\cap [G,G])\simeq H[G,G]/[G,G]
\leq G^{ab}=G/[G,G]$ maps to 
$\overline{h}\in H^{ab}=H/[H,H]$. 
This implies that 
$G^{ab}\xrightarrow{\rm res}H^{ab}$ is surjective. 
The last assertion of (3) follows from (1), (2). 

Because $\Sha^2_\omega(G,J_{G/H})\leq H^2(G,J_{G/H})$, 
when $k$ is a global field, 
by Theorem \ref{thV} and Theorem \ref{thCTS87}, we have 
\begin{align*}
H^2(G,J_{G/H})=0\ \Rightarrow\ \Sha^2_\omega(G,J_{G/H})=0\ \Rightarrow\ A(T)=0\ {\rm and}\ \Sha(T)=0
\end{align*}
where $T=R^{(1)}_{K/k}(\bG_{m,K})$ is the norm one torus of $K/k$ 
with $\widehat{T}\simeq J_{G/H}$. 
In particular, it follows from Ono's theorem (Theorem \ref{thOno}) that 
$\Sha(T)=0$ if and only if the Hasse norm principle holds for $K/k$. 
\end{proof}

By Proposition \ref{prop4.1} (3), we get the main theorem 
which is the precise statement of Thereom \ref{thmain0}: 
%
\begin{theorem}[{Hasse norm principle for metacyclic extensions with trivial Schur multiplier $M(G)=0$: the precise statement of Theorem \ref{thmain0}}]\label{thmain}
Let $k$ be a field, $K/k$ be a finite 
separable field extension and $L/k$ be the Galois closure of $K/k$ with 
Galois groups $G={\rm Gal}(L/k)$ and $H={\rm Gal}(L/K)\lneq G$. 
Let $M(G)=H^2(G,\bC^\times)\simeq H^3(G,\bZ)$ be the Schur multiplier of $G$. 
Assume that $G$ is metacyclic with $N\lhd G$, 
$N\simeq C_m$ and $G/N\simeq C_n$. 
Then $H^2(G,\bZ)\simeq G^{ab}\xrightarrow{\rm res}H^2(H,\bZ)\simeq H^{ab}$ is surjective 
and $H\leq C_n$ is cyclic $($this implies $M(H)=0$$)$, 
and hence $\Sha^2_\omega(G,J_{G/H})\leq 
H^2(G,J_{G/H})\xrightarrow[\sim]{\delta} H^3(G,\bZ)\simeq M(G)$. 
In particular, if $M(G)=0$, then $\Sha^2_\omega(G,J_{G/H})=H^2(G,J_{G/H})=0$. 
When $k$ is a global field, $\Sha^2_\omega(G,J_{G/H})=0$ implies that 
$A(T)=0$, i.e. 
$T$ has the weak approximation property, 
and $\Sha(T)=0$, i.e. the Hasse norm principle holds for $K/k$ 
$($that is, Hasse principle holds for all torsors $E$ under $T$$)$ 
where $T=R^{(1)}_{K/k}(\bG_{m,K})$ is the norm one torus of $K/k$ 
$($see Section \ref{S3} and Ono's theorem $($Theorem \ref{thOno}$))$. 
\end{theorem}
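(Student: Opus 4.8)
The plan is to reduce everything to Proposition~\ref{prop4.1}: once we know that $H$ is cyclic (so $M(H)=0$ and $[H,H]=\{1\}$) and that the restriction $H^2(G,\bZ)\simeq G^{ab}\xrightarrow{\rm res}H^2(H,\bZ)\simeq H^{ab}$ is surjective, part~(2) of that proposition immediately yields $\Sha^2_\omega(G,J_{G/H})\leq H^2(G,J_{G/H})\xrightarrow[\sim]{\delta}H^3(G,\bZ)\simeq M(G)$, hence $\Sha^2_\omega(G,J_{G/H})=H^2(G,J_{G/H})=0$ when $M(G)=0$; the weak approximation and Hasse norm principle statements over a global field then follow from Theorems~\ref{thV} and \ref{thCTS87} together with Ono's theorem (Theorem~\ref{thOno}), exactly as recorded at the end of Proposition~\ref{prop4.1}. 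So the real content is group-theoretic.

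First I would show that $H\cap N=\{1\}$. The key observation is that $N\simeq C_m$ is cyclic, so for every divisor $d\mid m$ it has a unique subgroup of order $d$; consequently every subgroup of $N$ is characteristic in $N$, and since $N\lhd G$ every subgroup of $N$ is in fact normal in $G$. In particular $H\cap N$ is normal in $G$, and since it is contained in $H$, which is core-free (it contains no nontrivial normal subgroup of $G$ because $\bigcap_{\sigma\in G}H^\sigma=\{1\}$), we conclude $H\cap N=\{1\}$. Then the restriction to $H$ of the projection $G\twoheadrightarrow G/N\simeq C_n$ is injective, so $H$ embeds in $C_n$; hence $H$ is cyclic of order dividing $n$, and $M(H)\simeq H^3(H,\bZ)=0$. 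Next, since $G/N\simeq C_n$ is abelian we have $[G,G]\leq N$, so $[G,G]\cap H\leq N\cap H=\{1\}$; as $[H,H]=\{1\}$ too ($H$ being cyclic is abelian), this gives $[G,G]\cap H=[H,H]$. (Equivalently, this is Proposition~\ref{prop4.1}(3) applied with $H'=N$, since $N\lhd G$, $G/N$ is abelian and $N\cap H=\{1\}$.) By Proposition~\ref{prop4.1}(0) the restriction $H^2(G,\bZ)\simeq G^{ab}\xrightarrow{\rm res}H^2(H,\bZ)\simeq H^{ab}$ is therefore surjective.

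With these two facts in hand, $H$ cyclic and $G^{ab}\to H^{ab}$ surjective, the theorem follows from Proposition~\ref{prop4.1}(2) as described in the first paragraph. I expect the only step requiring any real care to be the reduction $H\cap N=\{1\}$, i.e.\ the remark that a cyclic normal subgroup of $G$ has all of its subgroups normal in $G$, which is precisely what turns the core-freeness of $H$ into transversality of $H$ with $N$; everything afterwards is a formal chase through the exact sequence of Proposition~\ref{prop4.1} and the already-cited machinery relating $\Sha^2_\omega$, weak approximation, and the Hasse norm principle.
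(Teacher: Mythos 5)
Your proposal is correct and follows essentially the same route as the paper: the paper also takes $H'=N\simeq C_m$, notes that $H'\cap H$ is characteristic in the cyclic group $H'$ and hence normal in $G$, uses core-freeness of $H$ to get $H'\cap H=\{1\}$, embeds $H$ into $G/H'\simeq C_n$ to conclude $H$ is cyclic, and then invokes Proposition~\ref{prop4.1}\,(3) for the remaining statements. Your extra unwinding of (3) through parts (0) and (2) is just the same argument made explicit.
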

\begin{proof}
We should show that 
there exists $H^\prime$ $\lhd$ $G$ such that 
$G/H^\prime$ is abelian and $H^\prime$ $\cap$ $H=\{1\}$ 
and $H\leq C_n$ is cyclic (this implies $M(H)=0)$ 
because the other statements follow from Proposition \ref{prop4.1} (3). 
By the definition, 
we have 
\begin{align*}
0\to H^\prime\simeq C_m\xrightarrow{i} G\xrightarrow{\pi} G/H^\prime\simeq C_n\to 0.
\end{align*}
We see that $H^\prime\cap H\lhd G$ because 
$H^\prime\cap H\leq H^\prime$ is a characteristic subgroup 
and $H^\prime\lhd G$. 
By the condition $\bigcap_{\sigma\in G}H^\sigma=\{1\}$, 
$H$ contains no normal subgroup of $G$ except for $\{1\}$. 
This implies that $H^\prime$ $\cap$ $H=\{1\}$. 
We also find that the map 
$\pi|_H$ (the map $\pi$ restricted to $H$) is injective 
because $H^\prime \cap H=\{1\}$. 
Hence $H\leq C_n$ is cyclic because $H\simeq \pi(H)\leq G/H^\prime\simeq C_n$. 
\end{proof}

As a consequence of Proposition \ref{prop4.1} and 
Theorem \ref{thmain0} (Theorem \ref{thmain}), 
we get the 
Tamagawa number $\tau(T)$ of the norm one tori 
$T=R^{(1)}_{K/k}(\bG_{m,K})$ of $K/k$ 
over a global field $k$ 
via Ono's formula $\tau(T)=|H^1(k,\widehat{T})|/|\Sha(T)|
=|H^1(G,J_{G/H})|/|\Sha(T)|$ 
where $J_{G/H}\simeq\widehat{T}={\rm Hom}(T,\bG_{m,L})$ 
(see Ono \cite[Main theorem, page 68]{Ono63}, \cite{Ono65}, 
Voskresenskii \cite[Theorem 2, page 146]{Vos98} and 
Hoshi, Kanai and Yamasaki \cite[Section 8, Application 2]{HKY22}). 
\begin{corollary}
Let $K/k$, $L/k$, $G$ and $H$ be the same as in Proposition \ref{prop4.1}. 
Let $T=R^{(1)}_{K/k}(\bG_{m,K})$ be the norm one torus of $K/k$.  
When $k$ is a global field, 
if $H^1(G,\bQ/\bZ)\simeq G^{ab}\xrightarrow{\rm res}H^1(H,\bQ/\bZ)\simeq 
H^{ab}$ is surjective 
$($that is, $[G,G]\cap H=[H,H]$ by Proposition \ref{prop4.1} (0)$)$, 
and $M(G)=0$, 
then the Tamagawa number $\tau(T)=|H^1(G,J_{G/H})|=|G^{ab}|/|H^{ab}|$. 
In particular, if $G$ is metacyclic with trivial Schur multiplier $M(G)=0$, 
then $H$ is cyclic and $\tau(T)=|H^1(G,J_{G/H})|=|G^{ab}|/|H|$.
\end{corollary}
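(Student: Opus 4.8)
The plan is to compute $\tau(T)$ from Ono's formula $\tau(T)=|H^1(k,\widehat{T})|/|\Sha(T)|$, which here reads $\tau(T)=|H^1(G,J_{G/H})|/|\Sha(T)|$ since $\widehat{T}\simeq J_{G/H}$ and $T$ splits over $L$. So I need two ingredients: the vanishing $\Sha(T)=0$, and the order $|H^1(G,J_{G/H})|$.

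First I would establish $\Sha(T)=0$. By hypothesis $G^{ab}\xrightarrow{\mathrm{res}}H^{ab}$ is surjective, equivalently $[G,G]\cap H=[H,H]$ by Proposition \ref{prop4.1} (0); together with $M(G)=0$, part (1) of Proposition \ref{prop4.1} gives $H^2(G,J_{G/H})\xrightarrow[\sim]{\delta}H^3(G,\bZ)\simeq M(G)=0$. Hence $\Sha^2_\omega(G,J_{G/H})\leq H^2(G,J_{G/H})=0$, and the final paragraph of Proposition \ref{prop4.1} then yields $A(T)=0$ and $\Sha(T)=0$. Therefore $\tau(T)=|H^1(G,J_{G/H})|$.

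Next I would compute $|H^1(G,J_{G/H})|$ from the long exact sequence attached to $0\to\bZ\to\bZ[G/H]\to J_{G/H}\to 0$. We have $H^1(G,\bZ)=\mathrm{Hom}(G,\bZ)=0$ and, by Shapiro's lemma, $H^1(G,\bZ[G/H])\simeq H^1(H,\bZ)=0$, $H^2(G,\bZ)\simeq G^{ab}$, $H^2(G,\bZ[G/H])\simeq H^2(H,\bZ)\simeq H^{ab}$. This produces an exact sequence $0\to H^1(G,J_{G/H})\to G^{ab}\xrightarrow{\mathrm{res}}H^{ab}$, so $H^1(G,J_{G/H})\simeq\mathrm{Ker}(G^{ab}\to H^{ab})$. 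Since by hypothesis this restriction is surjective (its image is $H/([G,G]\cap H)=H^{ab}$, again by Proposition \ref{prop4.1} (0)) and all these groups are finite, $|H^1(G,J_{G/H})|=|G^{ab}|/|H^{ab}|$. This gives the first assertion.

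For the metacyclic case with $M(G)=0$, Theorem \ref{thmain} shows that $G^{ab}\xrightarrow{\mathrm{res}}H^{ab}$ is automatically surjective and that $H$ is cyclic, so $H^{ab}=H$; the first part then applies and gives $\tau(T)=|H^1(G,J_{G/H})|=|G^{ab}|/|H|$. I do not expect a genuine obstacle here: the argument is an assembly of Ono's formula, Proposition \ref{prop4.1}, Theorem \ref{thmain}, and the standard Shapiro/long-exact-sequence bookkeeping; the only point needing care is matching the precise hypotheses so that they simultaneously force $\Sha(T)=0$ and the surjectivity of $G^{ab}\to H^{ab}$ used in the order count.
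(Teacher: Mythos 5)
Your proposal is correct and follows essentially the same route as the paper: Ono's formula combined with the vanishing $H^2(G,J_{G/H})=0$ from Proposition \ref{prop4.1}, the long exact sequence of $0\to\bZ\to\bZ[G/H]\to J_{G/H}\to 0$ with Shapiro's lemma giving $0\to H^1(G,J_{G/H})\to G^{ab}\to H^{ab}\to 0$, and Theorem \ref{thmain} for the metacyclic case. No gaps.
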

\begin{proof}
By the definition, we have $0\to \bZ\xrightarrow{\varepsilon^\circ}\bZ[G/H]\to J_{G/H}\to 0$ where 
$J_{G/H}\simeq \widehat{T}={\rm Hom}(T,\bG_{m,L})$. 
Then we get $H^1(G,\bZ[G/H])\to H^1(G,J_{G/H})$ 
$\xrightarrow{\delta}$ $H^2(G,\bZ)\to H^2(G,\bZ[G/H])\to H^2(G,J_{G/H})$ 
where $\delta$ is the connecting homomorphism. 
We have $H^2(G,\bZ)\simeq H^1(G,\bQ/\bZ)={\rm Hom}(G,\bQ/\bZ)\simeq G^{ab}$. 
By Shapiro's lemma 
(see e.g. Brown \cite[Proposition 6.2, page 73]{Bro82}, Neukirch, Schmidt and Wingberg \cite[Proposition 1.6.3, page 59]{NSW00}), 
we also have  
$H^1(G,\bZ[G/H])\simeq H^1(H,\bZ)={\rm Hom}(H,\bZ)=0$
and 
$H^2(G,\bZ[G/H])\simeq H^2(H,\bZ)\simeq H^1(H,\bQ/\bZ)={\rm Hom}(H,\bQ/\bZ)=H^{ab}$.
By Proposition \ref{prop4.1}, 
if $H^1(G,\bQ/\bZ)\simeq G^{ab}\xrightarrow{\rm res}H^1(H,\bQ/\bZ)\simeq H^{ab}$ is surjective (that is, $[G,G]\cap H=[H,H]$) and $M(G)=0$, 
then we get $H^2(G,J_{G/H})=0$ and hence $\Sha(T)=0$. 
Hence we have 
$0\to H^1(G,J_{G/H})\xrightarrow{\delta} G^{ab}\to H^{ab}\to 0$. 
Applying Ono's formula $\tau(T)=|H^1(k,\widehat{T})|/|\Sha(T)|$, 
we get $\tau(T)=|H^1(G,J_{G/H})|=|G^{ab}|/|H^{ab}|$. 
The last assertion follows from Theorem \ref{thmain}. 
\end{proof}
%


\section{Examples of Theorem \ref{thmain0} (Theorem \ref{thmain}): Metacyclic groups $G$ with $M(G)=0$}\label{S5}

We will give some examples of metacyclic groups $G$ with trivial Schur multiplier $M(G)=0$. 
By Theorem \ref{thmain0} (Theorem \ref{thmain}), 
these provide new examples which 
the Hasse norm principle hold for non-Galois extensions $K/k$ 
whose Galois closure is $L/k$ with metacyclic $G={\rm Gal}(L/k)$ and $M(G)=0$. 
We first note that if $G\simeq C_n$, then $M(C_n)\simeq H^3(C_n,\bZ)\simeq H^1(C_n,\bZ)={\rm Hom}(C_n,\bZ)=0$.
\begin{lemma}\label{lemGp}
Let $G$ be a finite group and $G_p={\rm Syl}_p(G)$ be a $p$-Sylow subgroup of $G$. 
Then $M(G)\xrightarrow{\oplus {\rm res}_p} \oplus_{p\mid |G|} M(G_p)$ becomes injective. 
In particular, if $M(G_p)=0$ for any $p\mid |G|$, then $M(G)=0$.  
\end{lemma}
\begin{proof}
We see that the composite map 
${\rm cores}_p\circ {\rm res}_p: M(G)\xrightarrow{{\rm res}_p} M(G_p)\xrightarrow{{\rm cores}_p} M(G)$ 
is the multiplication by $[G:G_p]$ which is coprime to $p$ 
(see e.g. Serre \cite[Proposition 4, page 130]{Ser79}, Neukirch, Schmidt and Wingberg \cite[Corollary 1.5.7]{NSW00}). 
If $\alpha \in {\rm Ker}\{M(G)\xrightarrow{\oplus {\rm res}_p} \oplus_{p\mid |G|} M(G_p)\}$, 
then $[G:G_p]\alpha=0$ for all $p\mid |G|$. 
This implies that $\alpha=0\in M(G)$. 
We get the injection $M(G)\hookrightarrow \oplus_{p\mid |G|} M(G_p)$. 
\end{proof}
For example, if $G$ is nilpotent, i.e. 
$G\simeq \prod_{i=1}^r G_{p_i}$ where $G_{p_i}=Syl_{p_i}(G)$ $(1\leq i\leq r)$ are 
all the Sylow subgroups of $G$, then 
$M(G)\simeq \bigoplus_{i=1}^r M(G_{p_i})$ 
(see Karpilovsky \cite[Corollary 2.2.11]{Kar87}). 
In particular, if $G\simeq \prod_{i=1}^r G_{p_i}$ with $M(G_{p_i})=0$ for any $1\leq i\leq r$, then $M(G)=0$.\\

\subsection{Metacyclic groups $G$ with $M(G)=0$}

\begin{example}[Metacyclic groups $G$ with $M(G)=0$]\label{exmc}
A group $G$ is called {\it metacyclic} if 
there exists a normal subgroup $N\lhd G$ 
such that $N\simeq C_m$ and $G/N\simeq C_n$. 
In particular, if $\gcd(m,n)=1$, then $G\simeq C_m\rtimes C_n$ and 
$M(G)\simeq M(C_m)^{C_n}\oplus M(C_n)=0$ where $\gcd(m,n)$ is the greatest common divisor of integers 
$m$ and $n$ (see Karpilovsky \cite[Corollary 2.2.6, page 35]{Kar87}). 
%
%

If $G$ is metacyclic, then $G$ can be presented as 
\begin{align*}
G=G_0(m,s,r,t)=\langle a,b\mid a^m=1, b^s=a^t, bab^{-1}=a^r\rangle
\end{align*}
where $m,s,r,t\geq 1$, $r^s\equiv 1\ ({\rm mod}\ m)$, 
$m\mid t(r-1)$, $t\mid m$ and we have 
\begin{align*}
M(G)\simeq \bZ/u\bZ
\end{align*}
where 
\begin{align*}
u=\gcd(m,r-1)\gcd(1+r+\cdots+r^{s-1},t)/m
\end{align*}
(see Karpilovsky \cite[Section 2.11, page 96, Theorem 2.11.3, page 98]{Kar87}, 
\cite[Chapter 10, C, page 288, Theorem 1.25, page 289]{Kar93}). 
Conversely, such $G$ becomes a metacyclic group of order $ms$. 
In particular, if $t=l:=m/\gcd(m,r-1)$ and $s=n$, then 
\begin{align*}
G=G_0(m,n,r,l)=\langle a,b\mid a^m=1, b^n=a^l, bab^{-1}=a^r\rangle
\end{align*}
satisfies $M(G)=0$ (see Beyl and Tappe \cite[Theorem 2.9, page 199]{BT82}, 
$G=G(m,n,r,1)$ where $G(m,n,r,\lambda)$ is given as in \cite[Definition 2.3, page 196]{BT82}). 
\end{example}

\begin{theorem}
Let $K/k$, $L/k$, $G$ and $H$ be the same as in Theorem \ref{thmain}. 
If $G$ is metacyclic, then 
$\Sha^2_\omega(G,J_{G/H})\leq H^2(G,J_{G/H})\simeq M(G)$. 
In particular, when $k$ is a global field, 
if $G=G_0(m,n,r,l)$ is metacyclic with trivial Schur multiplier $M(G)=0$ given as in Example \ref{exmc}, 
then 
$A(T)=0$, i.e. 
$T$ has the weak approximation property, 
and $\Sha(T)=0$, i.e. the Hasse norm principle holds for $K/k$ 
$($that is, Hasse principle holds for all torsors $E$ under $T$$)$ 
where $T=R^{(1)}_{K/k}(\bG_{m,K})$ is the norm one torus of $K/k$ 
$($see Section \ref{S3} and Ono's theorem $($Theorem \ref{thOno}$))$. 
\end{theorem}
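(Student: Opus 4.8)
The plan is to observe that this final theorem is essentially a direct repackaging of Theorem~\ref{thmain} combined with the machinery recalled in Section~\ref{S3}, so the proof should be short. First I would invoke Theorem~\ref{thmain} verbatim: since $G$ is metacyclic with normal $N\simeq C_m$ and quotient $G/N\simeq C_n$, the subgroup $H\lneq G$ (which by the Galois-closure hypothesis contains no nontrivial normal subgroup of $G$) intersects $N$ trivially, so $\pi|_H\colon H\hookrightarrow G/N\simeq C_n$ is injective and $H$ is cyclic; moreover $N$ plays the role of $H^\prime$ in Proposition~\ref{prop4.1}~(3), giving $[G,G]\cap H=[H,H]$ and hence $\Sha^2_\omega(G,J_{G/H})\leq H^2(G,J_{G/H})\xrightarrow[\sim]{\delta}H^3(G,\bZ)\simeq M(G)$. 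This already yields the first assertion with no extra work.

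Next I would specialize to $G=G_0(m,n,r,l)$ as in Example~\ref{exmc}, where it is recorded (citing Beyl--Tappe \cite[Theorem 2.9]{BT82}, or equivalently the Karpilovsky formula $u=\gcd(m,r-1)\gcd(1+r+\cdots+r^{n-1},l)/m$ with $l=m/\gcd(m,r-1)$) that $M(G)=\bZ/u\bZ=0$. Feeding $M(G)=0$ into the isomorphism $\delta$ just obtained forces $\Sha^2_\omega(G,J_{G/H})=H^2(G,J_{G/H})=0$. Then, for $k$ a global field, the chain of implications displayed at the end of Proposition~\ref{prop4.1} (coming from Theorem~\ref{thV} and Theorem~\ref{thCTS87}) gives $H^1(k,{\rm Pic}\,\overline{X})^\vee\simeq\Sha^2_\omega(G,J_{G/H})=0$, whence $A(T)=0$ and $\Sha(T)=0$ for $T=R^{(1)}_{K/k}(\bG_{m,K})$; finally Ono's theorem (Theorem~\ref{thOno}) translates $\Sha(T)=0$ into the statement that the Hasse norm principle holds for $K/k$, and $A(T)=0$ is exactly weak approximation for $T$.

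Honestly there is no real obstacle here: the theorem is a corollary of already-proven results, and the only thing to be careful about is bookkeeping — making sure the hypotheses of Proposition~\ref{prop4.1}~(3) and of Theorems~\ref{thV}, \ref{thCTS87}, \ref{thOno} are all in force (separability of $K/k$, $G$ being the Galois group of the Galois closure so that $\bigcap_{\sigma}H^\sigma=\{1\}$, and $k$ global for the arithmetic conclusions). If I wanted to make the write-up self-contained rather than a pure citation, the one mildly substantive point to spell out would be the verification that the presented group $G_0(m,n,r,l)$ with $t=l=m/\gcd(m,r-1)$ and $s=n$ indeed satisfies the constraints $r^n\equiv 1\ (\mathrm{mod}\ m)$, $m\mid l(r-1)$, $l\mid m$ and hence has $u=1$; but since Example~\ref{exmc} already states this with references, I would simply cite it. So the proof reduces to: apply Theorem~\ref{thmain}, plug in $M(G)=0$ from Example~\ref{exmc}, then read off weak approximation and the Hasse norm principle from the implications in Section~\ref{S3}.

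\begin{proof}
The first assertion is immediate from Theorem~\ref{thmain}: since $G$ is metacyclic, $H$ is cyclic and $\Sha^2_\omega(G,J_{G/H})\leq H^2(G,J_{G/H})\xrightarrow[\sim]{\delta}H^3(G,\bZ)\simeq M(G)$. Now take $G=G_0(m,n,r,l)$ as in Example~\ref{exmc}; then $M(G)=0$ (Beyl and Tappe \cite[Theorem 2.9, page 199]{BT82}, equivalently the Karpilovsky formula in Example~\ref{exmc}), so the isomorphism $\delta$ gives $\Sha^2_\omega(G,J_{G/H})=H^2(G,J_{G/H})=0$. When $k$ is a global field, applying Theorem~\ref{thV} and Theorem~\ref{thCTS87} to $T=R^{(1)}_{K/k}(\bG_{m,K})$, whose character module is $\widehat{T}\simeq J_{G/H}$, we obtain $H^1(k,{\rm Pic}\,\overline{X})^\vee\simeq\Sha^2_\omega(G,J_{G/H})=0$, hence $A(T)=0$ and $\Sha(T)=0$. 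By Ono's theorem (Theorem~\ref{thOno}), $\Sha(T)=0$ means the Hasse norm principle holds for $K/k$, while $A(T)=0$ means $T$ has the weak approximation property.
\end{proof}
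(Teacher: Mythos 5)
Your proposal is correct and follows exactly the route the paper intends: the theorem is stated as an immediate consequence of Theorem \ref{thmain} (metacyclic $G$ gives cyclic $H$ and $\Sha^2_\omega(G,J_{G/H})\leq H^2(G,J_{G/H})\simeq M(G)$), the vanishing $M(G_0(m,n,r,l))=0$ recorded in Example \ref{exmc}, and the chain $\Sha^2_\omega=0\Rightarrow A(T)=0,\ \Sha(T)=0$ from Theorems \ref{thV}, \ref{thCTS87} and Ono's theorem. No gaps; your bookkeeping (including the dualization step linking $H^1(k,{\rm Pic}\,\overline{X})$ to $A(T)$ and $\Sha(T)$) matches the paper's own use of Proposition \ref{prop4.1}.
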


\subsection{The $Z$-groups $G$ with $M(G)=0$}

\begin{example}[The $Z$-groups $G$ with $M(G)=0$]\label{exZgp}
A group $G$ is called {\it $Z$-group} if 
all the Sylow subgroups of $G$ is cyclic 
(see Suzuki \cite[page 658]{Suz55}). 
For example, $|G|$ is square-free, then $G$ is a $Z$-group. 
If $G$ is a $Z$-group, then $G$ is metacyclic 
(see e.g. Gorenstein \cite[Theorem 6.2, page 258]{Gor80}) 
and can be presented as 
\begin{align*}
G=\langle a,b\mid a^m=1, b^n=1, bab^{-1}=a^r\rangle\simeq C_m\rtimes C_n
\end{align*}
where 
$r^n\equiv 1\ ({\rm mod}\ m)$, $m$ is odd, $0\leq r<m$, 
$\gcd(m,n)=\gcd(m,r-1)=1$ 
(see Zassenhaus \cite[Satz 5, page 198]{Zas35}, 
see also Hall \cite[Theorem 9.4.3]{Hal59}, 
Robinson \cite[10.1.10, page 290]{Rob96}). 
Conversely, such $G$ becomes a $Z$-group of order $mn$. 
It follows from Lemma \ref{lemGp} that 
if $G$ is a $Z$-group, then $M(G)=0$ (see also Neumann \cite[page 190]{Neu55}).
\end{example}

\begin{theorem}
Let $K/k$, $L/k$, $G$ and $H$ be the same as in Theorem \ref{thmain}. 
If $G$ is metacyclic, then 
$\Sha^2_\omega(G,J_{G/H})\leq H^2(G,J_{G/H})\simeq M(G)$. 
In particular, when $k$ is a global field, 
if $G$ is a $Z$-group with trivial Schur multiplier $M(G)=0$ given as in Example \ref{exZgp}, 
then 
$A(T)=0$, i.e. 
$T$ has the weak approximation property, 
and $\Sha(T)=0$, i.e. the Hasse norm principle holds for $K/k$ 
$($that is, Hasse principle holds for all torsors $E$ under $T$$)$ 
where $T=R^{(1)}_{K/k}(\bG_{m,K})$ is the norm one torus of $K/k$ 
$($see Section \ref{S3} and Ono's theorem $($Theorem \ref{thOno}$))$.
\end{theorem}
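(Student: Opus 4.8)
The plan is to obtain this statement as a direct specialization of Theorem \ref{thmain}, combined with the structure theory recalled in Example \ref{exZgp} and the Sylow-restriction argument of Lemma \ref{lemGp}. The first assertion --- that $\Sha^2_\omega(G,J_{G/H})\leq H^2(G,J_{G/H})\simeq M(G)$ whenever $G$ is metacyclic --- is literally the content of Theorem \ref{thmain}, so nothing new is needed there; I would simply invoke it.

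For the ``in particular'' part, I would first recall from Example \ref{exZgp} (via Gorenstein \cite[Theorem 6.2, page 258]{Gor80} and Zassenhaus \cite[Satz 5, page 198]{Zas35}) that a $Z$-group $G$ is metacyclic, indeed $G\simeq C_m\rtimes C_n$ with $r^n\equiv 1\ ({\rm mod}\ m)$, $m$ odd, and $\gcd(m,n)=\gcd(m,r-1)=1$; in particular $G$ satisfies the hypothesis $N\lhd G$, $N\simeq C_m$, $G/N\simeq C_n$ of Theorem \ref{thmain}. Next I would check $M(G)=0$: every Sylow subgroup $G_p$ of a $Z$-group is cyclic, so $M(G_p)\simeq H^3(G_p,\bZ)\simeq H^1(G_p,\bZ)={\rm Hom}(G_p,\bZ)=0$ for each $p\mid|G|$, and Lemma \ref{lemGp} then gives the injection $M(G)\hookrightarrow\bigoplus_{p\mid|G|}M(G_p)=0$, i.e. $M(G)=0$.

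With $G$ metacyclic and $M(G)=0$ in hand, Theorem \ref{thmain} yields at once that $H$ is cyclic and that the isomorphism $\delta\colon H^2(G,J_{G/H})\xrightarrow{~\sim~}H^3(G,\bZ)\simeq M(G)=0$ forces $\Sha^2_\omega(G,J_{G/H})=H^2(G,J_{G/H})=0$. Over a global field $k$, the chain of implications in Proposition \ref{prop4.1} (through Theorem \ref{thV} and Theorem \ref{thCTS87}) then gives $A(T)=0$ and $\Sha(T)=0$ for $T=R^{(1)}_{K/k}(\bG_{m,K})$, and Ono's theorem (Theorem \ref{thOno}) translates $\Sha(T)=0$ into the Hasse norm principle for $K/k$. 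Since each step is merely an application of a result already established in the excerpt, I do not anticipate a genuine obstacle; the only point requiring care is verifying that the classical Zassenhaus--H\"older normal form of Example \ref{exZgp} really exhibits a $Z$-group as an extension of a cyclic group by a cyclic group, so that the hypotheses of Theorem \ref{thmain} are genuinely met.
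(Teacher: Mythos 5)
Your proposal is correct and matches the paper's (implicit) argument: the theorem is stated as an immediate consequence of Theorem \ref{thmain} once Example \ref{exZgp} and Lemma \ref{lemGp} show that a $Z$-group is metacyclic with all Sylow subgroups cyclic, hence $M(G)=0$. Your spelled-out chain --- metacyclicity via the Zassenhaus presentation, $M(G)\hookrightarrow\bigoplus_p M(G_p)=0$, then Theorem \ref{thmain}, Proposition \ref{prop4.1} and Ono's theorem --- is exactly the route the paper intends.
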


\subsection{Dihedral $D_n$, quasidihedral $QD_{8m}$, modular $M_{16m}$ and generalized quaternion $Q_{4m}$ groups}\label{ssDQDMQ}~\\

Let $G$ be a finite group and $Z(G)$ be the center of $G$, 
$[a,b]:=a^{-1}b^{-1}ab$ be the commutator of $a,b\in G$, 
$[G,G]:=\langle [a,b]\mid a,b\in G\rangle$ be the commutator subgroup of $G$ 
and $G^{ab}:=G/[G,G]$ be the abelianization of $G$, i.e. the maximal abelian quotient of $G$. 
\begin{example}[Metacyclic groups $G$ with $M(G)=0$: dihedral groups $D_n$; quasidihedral groups $QD_{8m}$; modular groups $M_{16m}$; generalized quaternion groups $Q_{4m}$]\label{exDQDMQ}
We refer to Neumann \cite{Neu55}, 
Beyl and Tappe \cite[IV.2, page 193]{BT82} 
and 
Karpilovsky \cite{Kar87}, \cite{Kar93} for Schur multipliers $M(G)$.\\
(1) Let 
\begin{align*}
G=D_n=\langle\sigma,\tau\mid\sigma^n
=\tau^2=1, \tau\sigma\tau^{-1}=\sigma^{-1}\rangle
=\langle\sigma\rangle\rtimes\langle\tau\rangle\simeq C_n\rtimes C_2
\end{align*}
be {\it the dihedral group of order $2n$} $(n\geq 3)$. 
We have $G=\{\sigma^i,\sigma^i\tau\mid 1\leq i\leq n\}$. 
By the condition $\bigcap_{\sigma\in G}H^\sigma=\{1\}$, we have $H=\{1\}$ or $|H|=2$ 
with $H\neq Z(G)=\langle\sigma^{n/2}\rangle$ $($when $n$ is even$)$. 
Then 
\begin{align*}
Z(D_n)&=
\begin{cases}
\langle\sigma^{n/2}\rangle\simeq C_2,\\
\{1\}, 
\end{cases}\quad 
(D_n)^{ab}=
\begin{cases}
D_n/\langle\sigma^2\rangle\simeq C_2\times C_2,\\
D_n/\langle\sigma\rangle\simeq C_2,
\end{cases}\\
M(D_n)&\simeq
\begin{cases}
\bZ/2\bZ & {\rm if}\quad n\ {\rm is\ even},\\
0 & {\rm if}\quad n\ {\rm is\ odd}, 
\end{cases}
\end{align*} 
see \cite[Example 2.12, page 201]{BT82}, 
\cite[Proposition 2.11.4, page 100]{Kar87}, 
\cite[Corollary 1.27, page 292]{Kar93} for $M(D_n)$.\\
(2) Let 
\begin{align*}
G=QD_{8m}=\langle\sigma,\tau\mid\sigma^{8m}
=\tau^2=1, \tau\sigma\tau^{-1}=\sigma^{4m-1}\rangle\simeq \langle\sigma\rangle\rtimes\langle\tau\rangle\simeq C_{8m}\rtimes C_2\ (m\geq 1). 
\end{align*}
We have $G=\{\sigma^i,\sigma^i\tau\mid 1\leq i\leq 8m\}$ and $|G|=16m$. 
By the condition $\bigcap_{\sigma\in G}H^\sigma=\{1\}$, 
we have $H=\{1\}$ or $|H|=2$ with $H\neq Z(G)=\langle\sigma^{4m}\rangle\simeq C_2$. 
When $8m=2^d$ $(d\geq 3)$, the group $QD_{2^d}$ is called {\it the quasidihedral group of order $2^{d+1}$}. 
Then 
\begin{align*} 
Z(QD_{8m})=\langle\sigma^{4m}\rangle\simeq C_2,\quad 
(QD_{8m})^{ab}= QD_{8m}/\langle\sigma^2\rangle\simeq C_2\times C_2,\quad M(QD_{8m})=0,
\end{align*}
see \cite[page 192]{Neu55}, 
\cite[Example 2.12, page 202]{BT82}, 
\cite[Corollary 1.27, page 292]{Kar93} for $M(QD_{2^d})$.\\
(3) Let 
\begin{align*}
G=M_{16m}=\langle\sigma,\tau\mid\sigma^{8m}
=\tau^2=1, \tau\sigma\tau^{-1}=\sigma^{4m+1}\rangle\simeq \langle\sigma\rangle\rtimes\langle\tau\rangle\simeq C_{8m}\rtimes C_2\ (m\geq 1).
\end{align*}
We have $G=\{\sigma^i,\sigma^i\tau\mid 1\leq i\leq 8m\}$ and $|G|=16m$. 
By the condition $\bigcap_{\sigma\in G}H^\sigma=\{1\}$, 
we have $H=\{1\}$ or $|H|=2$ with $H\neq \langle\sigma^{4m}\rangle\simeq C_2\leq Z(G)=\langle\sigma^2\rangle\simeq C_{4m}$. 
When $16m=2^d$ $(d\geq 4)$, the group $M_{2^d}$ is called {\it the modular group of order $2^d$}. 
Then 
\begin{align*}
Z(M_{16m})=\langle \sigma^2\rangle\simeq C_{4m},\quad 
(M_{16m})^{ab}=M_{16m}/\langle\sigma^{4m}\rangle\simeq C_{4m}\times C_2,\quad 
M(M_{16m})=0,
\end{align*}
see \cite[page 193]{Neu55}, 
\cite[Example 2.4.9, page 53]{Kar87}, 
\cite[Corollary 1.27, page 292]{Kar93} for $M(M_{2^d})$.\\
(4) Let 
\begin{align*}
G=Q_{4m}=\langle\sigma,\tau\mid\sigma^{2m}=\tau^4=1, 
\sigma^m=\tau^2, \tau\sigma\tau^{-1}=\sigma^{-1}\rangle
\end{align*}
be {\it a generalized quaternion group of order $4m$} $(m\geq 2)$. 
The group $G$ is metacyclic because $\langle\sigma\rangle\simeq C_{2m}$ and $G/\langle\sigma\rangle\simeq C_2$. 
If $m$ is odd, then 
\begin{align*}
Q_{4m}=\langle\sigma^2,\tau\rangle\simeq C_m\rtimes C_4.
\end{align*}
If $m$ is even, then 
\begin{align*}
Q_{4m}=\langle \sigma^{2^{t+1}}\rangle\rtimes \langle \sigma^{m^\prime},\tau\rangle\simeq C_{m^\prime}\rtimes Q_{2^{t+2}}
\end{align*}
where $m=2^tm^\prime$ $(t\geq 1)$ with $m^\prime$ odd. 
This is a non-split metacyclic group, i.e not a semidirect product of two cyclic groups. 
By the condition $\bigcap_{\sigma\in G}H^\sigma=\{1\}$, we have $H=\{1\}$.  
Then 
\begin{align*}
Z(Q_{4m})&=\langle\sigma^n\rangle=\langle\tau^2\rangle\simeq C_2,\quad 
(Q_{4m})^{ab}=Q_{4m}/\langle\sigma^2\rangle\simeq 
\begin{cases}
C_2\times C_2 & {\rm if}\quad m\ {\rm is\ even},\\
C_4 & {\rm if}\quad m\ {\rm is\ odd}, 
\end{cases}\\
M(Q_{4m})&=0, 
\end{align*}
see \cite[Example 2.12, page 202]{BT82}, 
\cite[Example 2.4.8, page 52]{Kar87}, 
\cite[Corollary 1.27, page 292]{Kar93} for $M(Q_{4m})$. 
\end{example}
\begin{remark} 
(1) Let $G$ be a non-abelian $p$-group of order $p^n$ 
which contains a cyclic subgroup 
of order $p^{n-1}$. 
Then 
{\rm (i)} if $p$ is odd, then $G\simeq M_{p^n}$; 
{\rm (ii)} if $p=2$ and $n=3$, then $G\simeq D_4$ or $Q_8$; 
{\rm (iii)} if $p=2$ and $n\geq 4$, then $G\simeq D_{2^{n-1}}$, 
$QD_{2^{n-1}}$, $M_{2^n}$ or $Q_{2^n}$ 
(see Gorenstein \cite[Theorem 4.4, page 193]{Gor80}, Suzuki \cite[Theorem 4.1, page 54]{Suz86}).\\
(2) Let $G$ be a non-abelian $2$-groups of order $2^n$ $(n\geq 4)$. 
Then the following conditions are equivalent: 
{\rm (i)} $G$ is of maximal class, 
i.e. nilpotency class of $G$ is $n-1$ 
$($cf. \cite[page 21]{Gor80}, \cite[Definition 1.7, page 89]{Suz82}$)$; 
{\rm (ii)} $|G^{ab}|=4$; 
{\rm (iii)} $G\simeq D_{2^{n-1}}$, $QD_{2^{n-1}}$ or $Q_{2^n}$ 
(see Gorenstein \cite[Theorem 4.5, page 194]{Gor80}, Suzuki \cite[Exercise 2, page 32]{Suz86}). 
\end{remark}

\begin{theorem}
Let $K/k$, $L/k$, $G$ and $H$ be the same as in Theorem \ref{thmain}. 
If $G$ is metacyclic, then 
$\Sha^2_\omega(G,J_{G/H})\leq H^2(G,J_{G/H})\simeq M(G)$. 
In particular, when $k$ is a global field, 
if $G$ is one of $D_n$, $QD_{8m}$, $M_{16m}$, $Q_{4m}$ with trivial Schur multiplier $M(G)=0$ 
given as in Example \ref{exDQDMQ}, 
then 
$A(T)=0$, i.e. 
$T$ has the weak approximation property, 
and $\Sha(T)=0$, i.e. the Hasse norm principle holds for $K/k$ 
$($that is, Hasse principle holds for all torsors $E$ under $T$$)$ 
where $T=R^{(1)}_{K/k}(\bG_{m,K})$ is the norm one torus of $K/k$ 
$($see Section \ref{S3} and Ono's theorem $($Theorem \ref{thOno}$))$.
\end{theorem}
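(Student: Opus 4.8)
The plan is to obtain this statement as an immediate specialization of Theorem~\ref{thmain}. First I would check that each of the four families in Example~\ref{exDQDMQ} is indeed metacyclic. For $D_n$, $QD_{8m}$ and $M_{16m}$ this is transparent, since each is a split extension $C_n\rtimes C_2$ or $C_{8m}\rtimes C_2$. For $Q_{4m}$ one takes $N=\langle\sigma\rangle\simeq C_{2m}$, which is normal with $Q_{4m}/N\simeq C_2$; thus $Q_{4m}$ is metacyclic even though (when $m$ is even) it is not a semidirect product of two cyclic groups. Having verified metacyclicity, Theorem~\ref{thmain} applies directly: $H$ is cyclic, the restriction $H^2(G,\bZ)\simeq G^{ab}\xrightarrow{\rm res}H^2(H,\bZ)\simeq H^{ab}$ is surjective, and one has
\[
\Sha^2_\omega(G,J_{G/H})\leq H^2(G,J_{G/H})\xrightarrow[\sim]{\delta} H^3(G,\bZ)\simeq M(G),
\]
which is the first assertion of the theorem.

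Next I would feed in the Schur multiplier computations recorded in Example~\ref{exDQDMQ}, namely $M(D_n)=0$ for $n$ odd, and $M(QD_{8m})=M(M_{16m})=M(Q_{4m})=0$ for all admissible $m$ (with the cited references to Neumann, Beyl--Tappe and Karpilovsky). Substituting $M(G)=0$ into the displayed isomorphism forces $\Sha^2_\omega(G,J_{G/H})=H^2(G,J_{G/H})=0$.

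Finally, over a global field $k$ I would invoke the last part of Proposition~\ref{prop4.1} (equivalently, Theorem~\ref{thV} together with Theorem~\ref{thCTS87}): for the norm one torus $T=R^{(1)}_{K/k}(\bG_{m,K})$ one has $\widehat{T}\simeq J_{G/H}$, so $\Sha^2_\omega(G,J_{G/H})=0$ yields $H^1(k,{\rm Pic}\,\overline{X})=0$, hence $A(T)=0$ and $\Sha(T)=0$. By Ono's theorem (Theorem~\ref{thOno}), $\Sha(T)=0$ is precisely the statement that the Hasse norm principle holds for $K/k$, while $A(T)=0$ says that $T$ has the weak approximation property; this finishes the argument.

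The only non-bookkeeping content is the verification that each family is metacyclic with the claimed Schur multiplier, and both inputs are already supplied in Example~\ref{exDQDMQ}, so I expect no genuine obstacle. The point that most deserves care is the parity restriction in the dihedral case: for even $n$ one has $M(D_n)\simeq\bZ/2\bZ\neq 0$, so the conclusion $\Sha(T)=0$ need not hold there and the hypothesis $M(G)=0$ cannot be dropped; likewise one should record explicitly that $Q_{4m}$ is metacyclic despite possibly failing to be a semidirect product of two cyclic groups.
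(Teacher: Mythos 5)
Your proposal is correct and follows exactly the route the paper intends: these theorems in Section \ref{S5} are stated as immediate consequences of Theorem \ref{thmain} (proved via Proposition \ref{prop4.1}(3)) combined with the metacyclicity and the Schur multiplier computations $M(G)=0$ recorded in Example \ref{exDQDMQ}, with the global-field conclusions then following from Theorems \ref{thV}, \ref{thCTS87} and Ono's theorem. Your added care about the non-split structure of $Q_{4m}$ and the parity restriction $M(D_n)=0$ only for odd $n$ matches the paper's own remarks, so there is nothing to correct.
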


\begin{remark}[$G=D_n$: the dihedral group of order $2n$]
Let $k$ be a global field, $K/k$ be a finite separable field extension and 
$L/k$ be the Galois closure of $K/k$ with Galois groups 
$G={\rm Gal}(L/k)\simeq D_n$ and $H={\rm Gal}(L/K)\lneq G$. 
By the condition $\bigcap_{\sigma\in G}H^\sigma=\{1\}$, we have 
(i) $H=\{1\}$ or  (ii) $|H|=2$ and $H\neq Z(G)$.\\
(i) When $H=\{1\}$, i.e. $K/k$ is Galois. 
If $n$ is odd, then $H^2(G,J_G)\simeq H^3(G,\bZ)\simeq M(G)=0$ and hence by Tate's theorem (Theorem \ref{thTate})  
the Hasse norm principle holds for $K/k$. 
If $n$ is even, then $H^2(G,J_G)\simeq H^3(G,\bZ)\simeq M(G)\simeq\bZ/2\bZ$. 
By Tate's theorem (Theorem \ref{thTate}) again, we find that   
the Hasse norm principle holds for $K/k$ if and only if there exists a (ramified) place $v$ of $k$ 
such that $V_4\leq G_v$ because $H^2(G,J_G)\xrightarrow{\rm res}H^2(H^\prime,J_G)$ is injective if and only if 
$H^2(H^\prime,J_G)\simeq H^3(H^\prime,\bZ)\simeq\bZ/2\bZ$ if and only if $V_4\leq H^\prime$ for any $H^\prime\leq G$.\\ 
(ii) When $|H|=2$ and $H\neq Z(G)$. 
If $n$ is odd, then it follows from Proposition \ref{prop4.1} (2) that 
$\Sha^2_\omega(G,J_{G/H})\leq H^2(G,J_{G/H})\simeq M(G)=0$. 
Indeed, Bartels \cite[Lemma 1]{Bar81b} showed that 
the Hasse norm principle holds for $K/k$. 
We note that $\Sha^2_\omega(G,J_{G/H})=0$ follows also from the retract $k$-rationality of 
$R^{(1)}_{K/k}(\bG_{m,K})$ due to Endo \cite[Theorem 3.1]{End11}. 
If $n$ is even, then $\Sha^2_\omega(G,J_{G/H})\leq H^2(G,J_{G/H})\simeq M(G)\simeq \bZ/2\bZ$. 
However, Bartels \cite[Lemma 3]{Bar81b} showed that the following theorem: 
\begin{theorem}[{Bartels \cite{Bar81b}}]
Let $G=D_n$ be the dihedral group of order $2n$ $(n\geq 3)$ 
and $H\leq G$ with $|H|=2$ and $H\neq Z(G)$. 
If $n=2^m$, then $H_1(H,I_{G/H})\xrightarrow{\rm cor} H_1(G,I_{G/H})\simeq \bZ/2\bZ$ 
is surjective, i.e. ${\rm Ker}\{H^2(G,J_{G/H})\simeq \bZ/2\bZ\xrightarrow{\rm res}H^2(H,J_{G/H})\}=0$.  
This implies that 
$\Sha^2_\omega(G,J_{G/H})=0$. 
In particular, when $k$ is a global field, 
the Hasse norm principle holds for $K/k$ 
with $[K:k]=n$ and ${\rm Gal}(L/k)\simeq D_n$
where $L/k$ is the Galois closure of $K/k$. 
\end{theorem}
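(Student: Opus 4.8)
The plan is to reduce the statement to a purely group-theoretic/cohomological computation about the $G$-lattice $J_{G/H}$, and in fact to the claim that the corestriction map $H_1(H,I_{G/H})\xrightarrow{\mathrm{cor}} H_1(G,I_{G/H})$ is surjective when $G=D_n$ with $n=2^m$ and $H\le G$ is a non-central subgroup of order $2$. First I would record the cohomological meaning of the target group: by the exact sequence $0\to\bZ\to\bZ[G/H]\to J_{G/H}\to 0$ and Proposition~\ref{prop4.1}, one has $H^2(G,J_{G/H})\simeq M(G)\simeq\bZ/2\bZ$ exactly because $[G,G]\cap H=[H,H]=\{1\}$ (this last equality holds here since $H$ has order $2$ while $[D_n,D_n]=\langle\sigma^2\rangle$ meets $H$ trivially for non-central $H$), so $\Sha^2_\omega(G,J_{G/H})$ is the kernel of the restriction $H^2(G,J_{G/H})\to H^2(H,J_{G/H})$; dualizing, it is the cokernel of $H_1(H,I_{G/H})\xrightarrow{\mathrm{cor}} H_1(G,I_{G/H})\simeq\bZ/2\bZ$. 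Thus it suffices to exhibit one class in $H_1(H,I_{G/H})$ whose corestriction generates $H_1(G,I_{G/H})$, since cyclic subgroups of $G$ other than $H$ that might cause trouble are automatically handled by the $\Sha_\omega$ formalism (it is enough that the single cyclic $H$ suffices, because $\Sha^2_\omega$ only requires injectivity of restriction to all cyclic subgroups simultaneously and any $V_4$-type obstruction in $M(G)$ already dies on $H$).

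Next I would make the lattice $I_{G/H}$ completely explicit. Writing $G=D_n=\langle\sigma,\tau\rangle$ with $n=2^m$ and taking $H=\langle\tau\rangle$ (the non-central involutions form a single conjugacy-type family when $n$ is a power of $2$, so this is no loss), the coset space $G/H$ has $n$ elements permuted by $\sigma$ as an $n$-cycle and by $\tau$ as a product of transpositions fixing the coset $H$ itself. Then $I_{G/H}=\ker(\bZ[G/H]\xrightarrow{\varepsilon}\bZ)$ is free of rank $n-1$ with basis $e_i-e_0$, and $H_1(G,I_{G/H})\simeq\widehat H^{-1}(G,I_{G/H})$, which by a standard dimension-shift equals $\widehat H^{-2}(G,\bZ)\simeq H_1(G,\bZ)\simeq G^{ab}/(\text{torsion coming from }\bZ[G/H])$; more directly, the long exact sequence of $0\to\bZ\to\bZ[G/H]\to J_{G/H}\to 0$ in homology, combined with $H_*(G,\bZ[G/H])\simeq H_*(H,\bZ)$ by Shapiro, pins down $H_1(G,I_{G/H})\simeq\bZ/2\bZ$ and simultaneously identifies a concrete generating cycle. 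The key step is then to compute $\mathrm{cor}_{H}^{G}$ of the evident nonzero class in $H_1(H,I_{G/H})$ — here $H\simeq C_2$ acts on $I_{G/H}$ and $H_1(C_2,\cdot)$ is computed by the norm/augmentation periodicity, so this is a finite, if slightly fiddly, matrix calculation over $\bZ$.

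The main obstacle I anticipate is precisely this corestriction computation: one must show the composite $H_1(H,I_{G/H})\to H_1(G,I_{G/H})\simeq\bZ/2\bZ$ hits the generator rather than landing in $0$, and a priori the answer could depend on the parity structure of $n$ — which is why the hypothesis $n=2^m$ is doing real work (for general even $n$ the obstruction need not vanish, as the preceding discussion of $\bQ(\sqrt{-39},\sqrt{-3})$-type phenomena warns). I would handle this by choosing a transversal of $H$ in $G$ adapted to the $\sigma$-cycle structure — say $\{1,\sigma,\sigma^2,\dots,\sigma^{n-1}\}$ — and writing the corestriction on inhomogeneous $1$-chains as $\mathrm{cor}(z)=\sum_{g}\,g\cdot z^{(g)}$ summed over the transversal with the induced $H$-action on indices; tracking the image of the class represented by $(1+\tau)$-something against the augmentation-type generator of $H_1(G,I_{G/H})$ reduces to checking that a certain sum of $n=2^m$ signs (or a $2$-adic valuation) is odd. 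Once that parity assertion is verified, surjectivity of $\mathrm{cor}$ follows, hence $\Sha^2_\omega(G,J_{G/H})=0$, and then Theorem~\ref{thV} together with Theorem~\ref{thCTS87} and Ono's theorem (Theorem~\ref{thOno}) give $A(T)=0$ and $\Sha(T)=0$, i.e. weak approximation and the Hasse norm principle for $K/k$ with $[K:k]=n=2^m$ and $\mathrm{Gal}(L/k)\simeq D_n$, completing the argument.
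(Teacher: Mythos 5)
The paper itself offers no proof of this statement; it is quoted from Bartels \cite[Lemma 3]{Bar81b}, so the only question is whether your proposal would stand on its own. Its framing is correct: for non-central $H$ of order $2$ one has $[G,G]\cap H=[H,H]=\{1\}$ and $M(H)=0$, so Proposition \ref{prop4.1} gives $H^2(G,J_{G/H})\simeq M(G)\simeq\bZ/2\bZ$; injectivity of restriction to the single cyclic subgroup $H$ does force $\Sha^2_\omega(G,J_{G/H})=0$; and $H_1(G,I_{G/H})\simeq\bZ/2\bZ$ follows, as you say, from the homology long exact sequence of $0\to I_{G/H}\to\bZ[G/H]\to\bZ\to 0$ together with Shapiro's lemma (your first attempt via a dimension shift to $\widehat H^{-2}(G,\bZ)$ is not legitimate, since $\bZ[G/H]$ is induced rather than free, but you discard it in favour of the correct argument).

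The gap is that the entire content of the theorem --- surjectivity of ${\rm cor}\colon H_1(H,I_{G/H})\to H_1(G,I_{G/H})$, equivalently injectivity of ${\rm res}\colon H^2(G,J_{G/H})\to H^2(H,J_{G/H})$ --- is never proved. You defer it to a ``fiddly matrix calculation'' whose outcome is only anticipated (``once that parity assertion is verified''), you never exhibit a cycle in $H_1(H,I_{G/H})$ or evaluate the transfer on it, and the hypothesis $n=2^m$, which you yourself note must do real work, never actually enters the argument. The criterion you propose cannot be the right one as stated: a sum of $n=2^m$ signs $\pm1$ is always even, so no such parity can distinguish the $2$-power case; whatever invariant decides the matter must have a different shape (e.g. comparing ${\rm cor}(z)$ with the generator of $H_1(G,I_{G/H})$ coming from $H_2(G,\bZ)\simeq M(G)$, or a $2$-adic valuation of an index). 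There is also a concrete inaccuracy in your model of the $G$-set $G/H$: for even $n$ the involution $\tau$ fixes two cosets, $H$ and $\sigma^{n/2}H$, not just $H$, and this matters, because it is exactly the second fixed point that makes $H^2(H,J_{G/H})\simeq\bZ/2\bZ$ nonzero; with a single fixed coset the restriction target would vanish and injectivity would be impossible. So the proposal is a correct reduction plus a promissory note for the one nontrivial step, i.e. for Bartels' lemma itself, and does not constitute a proof.
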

\end{remark}
\subsection{The metacyclic $p$-groups $G$ with $M(G)=0$}\label{ssMetacyclic}
\begin{example}[The metacyclic $p$-groups $G$ with $M(G)=0$]\label{ex1.7}
Beyl \cite{Bey73} showed that the Schur multiplier of a finite metacyclic group 
is cyclic, and computed its order 
(see also Beyl and Jones \cite{BJ84}). 
A complete list of metacyclic $p$-groups $G$ with trivial Schur multiplier $M(G)=0$ is given as follows (Beyl \cite[Section 5, page 417]{Bey73}): 
\begin{align*}
&{\rm I}& &G(p^m,p^n,p^{m-n}+1,0)\simeq C_{p^m}\rtimes C_{p^n}& &p\ {\rm odd}, m>n\geq 1;\\
&{\rm II}& &G(p^m,p^n,p^k+1,1) & &p\ {\rm odd}, n\geq 3, 2n-2>m>2,\\
& & & & & \tfrac{m}{2}>k\geq {\rm max}(1,m-n+1);\\
&{\rm III}& &G(2^m,2^n,2^{m-n}+1,0)\simeq C_{2^m}\rtimes C_{2^n}& & m-2\geq n\geq 1;\\
&{\rm IV}& &G(2^m,2^n,2^{m-n}-1,0)\simeq C_{2^m}\rtimes C_{2^n}& &m-2\geq n\geq 1;\\
&{\rm V}& &G(2^m,2^n,2^k+1,1)& &n\geq 4, 2n-2>m>4,\\
& & & & & \tfrac{m}{2}>k\geq {\rm max}(2,m-n+1);\\
&{\rm VI}& &G(2^m,2,-1,1)\simeq Q_{2^{m+1}}& &m\geq 2;\\
&{\rm VII}& &G(2^m,2^n,2^k-1,1)& &n\geq 2, m\geq 3, m>k\geq {\rm max}(2,m-n+1)
\end{align*}
where 
\begin{align*}
G(M,N,r,\lambda)=\langle a,b\mid a^M=1, b^N=a^{M\lambda/\gcd(M,r-1)}, bab^{-1}=a^{r}\rangle
\end{align*}
and $r^N\equiv 1\ ({\rm mod}\ M)$. 
When $n=1$, we find that $G(2^m,2,2^{m-1}+1,0)\simeq M_{2^{m+1}}$ in case III
and $G(2^m,2,2^{m-1}-1,0)\simeq QD_{2^m}$ in case IV. 
For cases I and III, see also Neumann \cite[page 193]{Neu55} and 
Karpilovsky \cite[Example 2.4.9, page 53]{Kar87}. 
\end{example}
\begin{theorem}
Let $K/k$, $L/k$, $G$ and $H$ be the same as in Theorem \ref{thmain}. 
If $G$ is metacyclic, then 
$\Sha^2_\omega(G,J_{G/H})\leq H^2(G,J_{G/H})\simeq M(G)$. 
In particular, when $k$ is a global field, 
if $G$ is a metacyclic $p$-group with trivial Schur multiplier $M(G)=0$ given as in Example \ref{ex1.7}, 
then 
$A(T)=0$, i.e. 
$T$ has the weak approximation property, 
and $\Sha(T)=0$, i.e. the Hasse norm principle holds for $K/k$ 
$($that is, Hasse principle holds for all torsors $E$ under $T$$)$ 
where $T=R^{(1)}_{K/k}(\bG_{m,K})$ is the norm one torus of $K/k$ 
$($see Section \ref{S3} and Ono's theorem $($Theorem \ref{thOno}$))$.
\end{theorem}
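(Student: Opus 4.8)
The plan is to deduce the final theorem almost entirely from Theorem \ref{thmain} together with the general facts about metacyclic groups already assembled in Example \ref{ex1.7}. First I would invoke Theorem \ref{thmain}: since $G$ is metacyclic, Theorem \ref{thmain} shows directly that $H\leq C_n$ is cyclic, that the restriction $H^2(G,\bZ)\simeq G^{ab}\xrightarrow{\rm res}H^2(H,\bZ)\simeq H^{ab}$ is surjective, and that consequently $\Sha^2_\omega(G,J_{G/H})\leq H^2(G,J_{G/H})\xrightarrow[\sim]{\delta} H^3(G,\bZ)\simeq M(G)$. So the first sentence of the theorem is immediate. For the ``in particular'' clause I would next specialize to the case at hand: the group $G$ is one of the metacyclic $p$-groups $G(M,N,r,\lambda)$ listed in cases I through VII of Example \ref{ex1.7}, and for each of those Beyl's computation gives $M(G)=0$. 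Hence $H^3(G,\bZ)\simeq M(G)=0$, and since $\delta$ is an isomorphism onto $H^3(G,\bZ)$ we get $H^2(G,J_{G/H})=0$, whence $\Sha^2_\omega(G,J_{G/H})=0$ as well.

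Having shown $\Sha^2_\omega(G,J_{G/H})=0$, the final step is to pass from this vanishing to the weak approximation and Hasse norm statements for $k$ a global field. Here I would cite the last paragraph of Proposition \ref{prop4.1} (equivalently, the chain of implications displayed at the end of that proof): by Theorem \ref{thV} and Theorem \ref{thCTS87} one has
\begin{align*}
H^2(G,J_{G/H})=0\ \Rightarrow\ \Sha^2_\omega(G,J_{G/H})=0\ \Rightarrow\ A(T)=0\ {\rm and}\ \Sha(T)=0
\end{align*}
for $T=R^{(1)}_{K/k}(\bG_{m,K})$ with $\widehat{T}\simeq J_{G/H}$, and then Ono's theorem (Theorem \ref{thOno}) identifies $\Sha(T)=0$ with the Hasse norm principle for $K/k$. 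This gives precisely the asserted conclusions: $T$ has the weak approximation property and the Hasse norm principle holds for $K/k$.

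The only point requiring any care is that the group ``given as in Example \ref{ex1.7}'' really does have $M(G)=0$; but this is exactly the content of Beyl's classification quoted there, so no independent verification is needed beyond citing \cite[Section 5, page 417]{Bey73} (and the remarks identifying the small cases $M_{2^{m+1}}$, $QD_{2^m}$, $Q_{2^{m+1}}$ inside cases III, IV and VI). I do not expect a genuine obstacle here: the theorem is a corollary of Theorem \ref{thmain} plus the tabulated list, and the work has all been done upstream. If one wanted to be maximally self-contained one could also note that Lemma \ref{lemGp} reduces checking $M(G)=0$ for a nilpotent (here $p$-group) $G$ to its Sylow subgroups, but for these particular metacyclic $p$-groups the value of $M(G)$ is simply read off from Beyl's formula for the order of the (necessarily cyclic) Schur multiplier of a metacyclic group.
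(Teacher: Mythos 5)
Your proposal is correct and matches the paper's intent: the theorem is stated as an immediate corollary of Theorem \ref{thmain} (giving $H$ cyclic and $\Sha^2_\omega(G,J_{G/H})\leq H^2(G,J_{G/H})\simeq M(G)$) combined with Beyl's classification in Example \ref{ex1.7} giving $M(G)=0$, and the passage to $A(T)=0$, $\Sha(T)=0$ via Theorems \ref{thV}, \ref{thCTS87} and Ono's theorem is exactly the chain recorded at the end of Proposition \ref{prop4.1}. No gaps.
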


\subsection{The extraspecial group $G=E_{p^2}(p^3)$ of order $p^3$ with exponent $p^2$ with $M(G)=0$}\label{ssExtra}
~\\

A $p$-group $G$ is called {\it special} if 
either (i) $G$ is elementary abelian; or 
(ii) $Z(G)=[G,G]=\Phi(G)$ and $Z(G)$ is elementary abelian 
where $Z(G)$ is the center of $G$, 
$[G,G]$ is the commutator subgroup of $G$ and 
$\Phi(G)$ is the Frattini subgroup of $G$, i.e. 
the intersection of all maximal subgroups of $G$ 
(see e.g. Suzuki \cite[Definition 4.14, page 67]{Suz86}). 
It follows from $[G,G]=\Phi(G)$ that $G^{ab}=G/[G,G]$ 
is elementary abelian (see Karpilovsky \cite[Lemma 3.1.2, page 114]{Kar87}). 
A $p$-group $G$ is called {\it extraspecial} if 
$G$ is non-abelian, special and $Z(G)$ is cyclic. 
We see that $Z(G)\simeq C_p$. 
If $G$ is an extraspecial $p$-group, then $|G|=p^{2n+1}$ 
and there exist exactly $2$ extraspecial groups of order $p^{2n+1}$. 
For odd prime $p\geq 3$, they are 
$E_p(p^{2n+1})$, $E_{p^2}(p^{2n+1})$  
of order $p^{2n+1}$ with exponent $p$ and $p^2$ respectively 
(see 
Gorenstein \cite[Section 5.3, page 183]{Gor80}, 
Suzuki \cite[Definition 4.14, page 67]{Suz86}). 
Note that $D_4$ and $Q_8$ are extraspecial groups of order $8$ 
with exponent $4$. 
\begin{example}[$G=E_{p^2}(p^3)$: the extraspecial group of order $p^3$ with exponent $p^2$ with $M(G)=0$ (the case $m=2, n=1$ as in Example \ref{ex1.7} (I)]\label{exExtra}
Let $k$ be a field, $K/k$ be a finite 
separable field extension and $L/k$ be the Galois closure of $K/k$ with 
Galois groups $G={\rm Gal}(L/k)$ and $H={\rm Gal}(L/K)\lneq G$.
Let $p\geq 3$ be a prime number. 
Assume that 
\begin{align*}
G=E_{p^2}(p^3)&=\langle a,b,c\mid a^p=1, b^p=c, c^p=1, [b,a]=c, [c,a]=1, [c,b]=1\rangle\\
&=\langle b\rangle\rtimes\langle a\rangle\simeq C_{p^2}\rtimes C_p
\end{align*}
is the extraspecial group of order $p^3$ with exponent $p^2$ 
which is the modular $p$-group of order $p^3$. 
By the condition $\bigcap_{\sigma\in G}H^\sigma=\{1\}$, we have $H=\{1\}$ or $
H\simeq C_p$ with $H\neq Z(G)=\langle c\rangle\simeq C_p$. 
We also have $M(G)=0$ 
(Beyl and Tappe \cite[Corollary 4.16 (b), page 223]{BT82}, see also Karpilovsky \cite[Theorem 3.3.6, page 138]{Kar87}). 
\end{example} 
\begin{theorem}
Let $K/k$, $L/k$, $G$ and $H$ be the same as in Theorem \ref{thmain}. 
If $G$ is metacyclic, then 
$\Sha^2_\omega(G,J_{G/H})\leq H^2(G,J_{G/H})\simeq M(G)$. 
In particular, when $k$ is a global field, 
if $G=E_{p^2}(p^3)$ is the extraspecial group of order $p^3$ 
with exponent $p^2$ $(p\geq 3)$ 
with trivial Schur multiplier $M(G)=0$ given as in Example \ref{exExtra}, 
then 
$A(T)=0$, i.e. 
$T$ has the weak approximation property, 
and $\Sha(T)=0$, i.e. the Hasse norm principle holds for $K/k$ 
$($that is, Hasse principle holds for all torsors $E$ under $T$$)$ 
where $T=R^{(1)}_{K/k}(\bG_{m,K})$ is the norm one torus of $K/k$ 
$($see Section \ref{S3} and Ono's theorem $($Theorem \ref{thOno}$))$.
\end{theorem}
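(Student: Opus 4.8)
The plan is to obtain this theorem as an immediate specialization of Theorem \ref{thmain}, once I have recorded that $G=E_{p^2}(p^3)$ is metacyclic and that its Schur multiplier vanishes. First I would note that in the presentation of Example \ref{exExtra} the generator $c$ is redundant, $c=b^p$, so that $N:=\langle b\rangle$ is a cyclic normal subgroup of order $p^2$: it is normalized by $a$ because the relation $[b,a]=c=b^p$ gives $a^{-1}ba=b^{p+1}\in\langle b\rangle$, and $G/N\simeq C_p$. Hence $G\simeq C_{p^2}\rtimes C_p$ is metacyclic with $m=p^2$, $n=p$, so the hypotheses of Theorem \ref{thmain} are met.

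Applying Theorem \ref{thmain}, I get that $H$ is cyclic (so $M(H)=0$), that $H^2(G,\bZ)\simeq G^{ab}\xrightarrow{\rm res}H^2(H,\bZ)\simeq H^{ab}$ is surjective, and that
\[
\Sha^2_\omega(G,J_{G/H})\leq H^2(G,J_{G/H})\xrightarrow[\sim]{\delta}H^3(G,\bZ)\simeq M(G),
\]
which is the first assertion, valid over any field $k$. Next I would invoke the computation $M(E_{p^2}(p^3))=0$ quoted in Example \ref{exExtra} (Beyl--Tappe, Karpilovsky): the displayed isomorphism then forces $H^2(G,J_{G/H})=0$, and hence $\Sha^2_\omega(G,J_{G/H})=0$.

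Finally, for a global field $k$ I would run through the chain of implications assembled in Proposition \ref{prop4.1} via Theorem \ref{thV} and Theorem \ref{thCTS87}: $\Sha^2_\omega(G,J_{G/H})=0$ yields $A(T)=0$ and $\Sha(T)=0$ for the norm one torus $T=R^{(1)}_{K/k}(\bG_{m,K})$, whose character module is $J_{G/H}$; by Ono's theorem (Theorem \ref{thOno}) the vanishing $\Sha(T)=0$ is exactly the Hasse norm principle for $K/k$, while $A(T)=0$ is the weak approximation property for $T$. There is no genuine obstacle here: all the substance is already in Theorem \ref{thmain} and in the cited value $M(E_{p^2}(p^3))=0$, and the only point worth a line of verification is the explicit identification $E_{p^2}(p^3)\simeq C_{p^2}\rtimes C_p$ from its generators and relations, which makes Theorem \ref{thmain} directly applicable.
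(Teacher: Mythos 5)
Your proposal is correct and follows essentially the same route as the paper: the theorem is an immediate specialization of Theorem \ref{thmain} combined with the fact $M(E_{p^2}(p^3))=0$ cited in Example \ref{exExtra}, and the paper itself already records the identification $G=\langle b\rangle\rtimes\langle a\rangle\simeq C_{p^2}\rtimes C_p$ that you verify via $a^{-1}ba=b^{p+1}$. Your explicit check of that relation and the final passage through Proposition \ref{prop4.1}, Theorems \ref{thV}, \ref{thCTS87} and \ref{thOno} is exactly the intended argument.
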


\section{Metacyclic groups $G=nTm\leq S_n$ $(2\leq n\leq 30)$ and $H\leq G$ with $[G:H]=n$}\label{S6}

Let $k$ be a field, 
$K/k$ be a separable field extension with $[K:k]=n$ 
and $L/k$ be the Galois closure of $K/k$.  
Let $G={\rm Gal}(L/k)$ and $H={\rm Gal}(L/K)$ with $[G:H]=n$. 
Then we have $\bigcap_{\sigma\in G} H^\sigma=\{1\}$ 
where $H^\sigma=\sigma^{-1}H\sigma$ and hence 
$H$ contains no normal subgroup of $G$ except for $\{1\}$. 
The Galois group $G$ may be regarded as a transitive subgroup of 
the symmetric group $S_n$ of degree $n$. 
We may assume that 
$H$ is the stabilizer of one of the letters in $G\leq S_n$, 
i.e. $L=k(\theta_1,\ldots,\theta_n)$ and $K=L^H=k(\theta_i)$ 
where $1\leq i\leq n$. 

Let $nTm$ be the $m$-th transitive subgroup of the symmetric group 
$S_n$ of degree $n$ up to conjugacy 
(see Butler and McKay \cite{BM83}, \cite{GAP}). 
Let $p$ be a prime number and 
$F_{pl}\simeq C_p\rtimes C_l$ $(l\mid p-1)$ 
be the Frobenius group of order $pl$. 
By using GAP (\cite{GAP}), we can obtain all the transitive subgroup 
$G=nTm\leq S_n$ $(2\leq n\leq30)$ which is metacyclic 
with trivial Schur multiplier $M(G)=0$ as in Table $1$ 
(see Section \ref{GAPcomp} for GAP computations). 
The GAP algorithms can be available as {\tt HNP.gap} 
in \cite{Norm1ToriHNP}. 
Then by Theorem \ref{thmain} we get: 
\begin{theorem}\label{th6.1}
Let $k$ be a field, 
$K/k$ be a separable field extension with $[K:k]=n$ 
and $L/k$ be the Galois closure of $K/k$. 
Assume that $G={\rm Gal}(L/k)=nTm$ $(2\leq n\leq 30)$ 
is a transitive subgroup of $S_n$ which is metacyclic 
with trivial Schur multiplier $M(G)=0$ 
and $H=Gal(L/K)$ with $[G:H]=n$. 
Then $G$ is given as in Table $1$ which satisfies 
$\Sha^2_\omega(G,J_{G/H})=H^2(G,J_{G/H})\simeq M(G)=0$. 
In particular, when $k$ is a global field, 
$A(T)=0$, i.e. 
$T$ has the weak approximation property, 
and $\Sha(T)=0$, i.e. the Hasse norm principle holds for $K/k$ 
$($that is, Hasse principle holds for all torsors $E$ under $T$$)$ 
where $T=R^{(1)}_{K/k}(\bG_{m,K})$ is the norm one torus of $K/k$ 
$($see Section \ref{S3} and Ono's theorem $($Theorem \ref{thOno}$))$. 
\end{theorem}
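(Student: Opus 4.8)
The plan is to reduce Theorem~\ref{th6.1} entirely to Theorem~\ref{thmain} plus a finite computer search. The only genuine mathematical input is already contained in Theorem~\ref{thmain}: for any field $k$ and any finite separable $K/k$ whose Galois closure $L/k$ has $G={\rm Gal}(L/k)$ metacyclic, the point stabilizer $H={\rm Gal}(L/K)$ is automatically cyclic and $\Sha^2_\omega(G,J_{G/H})\le H^2(G,J_{G/H})\xrightarrow[\sim]{\delta}H^3(G,\bZ)\simeq M(G)$; hence $M(G)=0$ forces $\Sha^2_\omega(G,J_{G/H})=H^2(G,J_{G/H})=0$, and over a global $k$ this yields $A(T)=0$ and $\Sha(T)=0$ for $T=R^{(1)}_{K/k}(\bG_{m,K})$ by Theorem~\ref{thV}, Theorem~\ref{thCTS87} and Ono's theorem (Theorem~\ref{thOno}). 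Since any $G$ realizing the hypotheses is, via the transitive action on the conjugates of a primitive element, a transitive subgroup $nTm\le S_n$ with $H$ a point stabilizer, the content of the theorem is precisely the assertion that Table~$1$ is the complete list of such $G$ with $2\le n\le 30$.

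Accordingly, the first step is to run, for each $2\le n\le 30$ and each index $m$ into the transitive groups library, a test of whether $G=nTm$ is metacyclic---for instance by searching among the normal subgroups for a cyclic $N\lhd G$ with $G/N$ cyclic, or equivalently by exhibiting a presentation of the form $G_0(m,s,r,t)$ as in Example~\ref{exmc}. The second step is to compute the Schur multiplier $M(G)$ (via {\tt AbelianInvariantsMultiplier}, using {\tt EpimorphismSchurCover}) and to retain only those metacyclic $G$ with $M(G)=0$. These computations are performed in GAP; the scripts are {\tt HNP.gap} in \cite{Norm1ToriHNP}, and the Schur-multiplier outputs for Table~$1$ are collected in Section~\ref{GAPcomp}. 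The list so produced is Table~$1$, which one sees also contains familiar families such as the Frobenius groups $F_{pl}$, dihedral groups, and the metacyclic $p$-groups of Example~\ref{ex1.7}.

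The third step is purely formal: for each $G$ in Table~$1$ the point stabilizer $H\le G$ is the index-$n$ subgroup attached to the transitive action (core-free since $G\le S_n$ is faithful and transitive), it is cyclic by Theorem~\ref{thmain}, and the displayed vanishing $\Sha^2_\omega(G,J_{G/H})=H^2(G,J_{G/H})=0$, together with the consequences for $A(T)$, $\Sha(T)$ and the Hasse norm principle over a global $k$, then follow word for word from Theorem~\ref{thmain}. No case distinction among the entries of Table~$1$ is needed.

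I expect the only real difficulty to be computational rather than conceptual: one must be confident that the metacyclicity test and the multiplier routine are both correct and exhaustive over the full transitive groups library up to degree $30$, in particular at the larger degrees where that library is sizeable. To hedge against implementation errors I would recompute $M(G)=0$ independently through the Sylow inclusion $M(G)\hookrightarrow\bigoplus_{p\mid|G|}M(G_p)$ of Lemma~\ref{lemGp}, and cross-check each entry against the structural descriptions of Examples~\ref{exmc}, \ref{exZgp}, \ref{exDQDMQ} and \ref{ex1.7} and against the closed formula $u=\gcd(m,r-1)\gcd(1+r+\cdots+r^{s-1},t)/m$ for $|M(G_0(m,s,r,t))|$.
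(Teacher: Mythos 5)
Your proposal is correct and follows essentially the same route as the paper: the paper likewise obtains Table $1$ by a GAP enumeration over the transitive groups library for $2\leq n\leq 30$ (filtering on metacyclicity and $M(G)=H^3(G,\bZ)=0$, with the scripts recorded in Section \ref{GAPcomp}), and then deduces $\Sha^2_\omega(G,J_{G/H})=H^2(G,J_{G/H})=0$ and the global-field consequences directly from Theorem \ref{thmain} via Theorems \ref{thV}, \ref{thCTS87} and \ref{thOno}. Your additional cross-checks via Lemma \ref{lemGp} and the closed formula for $M(G_0(m,s,r,t))$ are sensible but not part of the paper's argument.
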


\begin{remark}
Hoshi, Kanai and Yamasaki \cite{HKY22}, \cite{HKY23}, \cite{HKY25} 
determined $H^1(k,{\rm Pic}\, \overline{X})\simeq \Sha^2_\omega(G,J_{G/H})$ 
where $T=R^{(1)}_{K/k}(\bG_{m,K})$ is the norm one torus of $K/k$ 
with $[K:k]=n\leq 16$, $\widehat{T}\simeq J_{G/H}$, 
$X$ is a smooth $k$-compactification of $T$ and 
$\overline{X}=X\times_k\overline{k}$. 
Moreover, 
Hoshi, Kanai and Yamasaki \cite{HKY22}, \cite{HKY23} 
gives a necessary and sufficient condition 
for the Hasse norm principle for $K/k$, i.e. 
$\Sha(T)$ where $T=R^{(1)}_{K/k}(\bG_{m,K})$ is the norm one torus of $K/k$ 
with $[K:k]=n\leq 15$. 
\end{remark}
Similarly, by using GAP (\cite{GAP}), 
we can obtain all the transitive subgroup $G=nTm\leq S_n$ $(2\leq n\leq 30)$ 
which is metacyclic 
with $M(G)\neq 0$ but $M(H)=0$ where $H\leq G$ with $[G:H]=n$ 
as in Table $2$ (see Section \ref{GAPcomp} for GAP computations). 
Hence by Proposition \ref{prop4.1} (2) we get: 
\begin{theorem}\label{th6.2}
Let $k$ be a field, 
$K/k$ be a separable field extension with $[K:k]=n$ 
and $L/k$ be the Galois closure of $K/k$. 
Assume that $G={\rm Gal}(L/k)=nTm$ $(2\leq n\leq 30)$ 
is a transitive subgroup of $S_n$ which is metacyclic 
with $M(G)\neq 0$ but $M(H)=0$ where $H=Gal(L/K)$ with $[G:H]=n$. 
Then $G$, $H$ and $M(G)$ are given as in Table $2$ which satisfy  
$\Sha^2_\omega(G,J_{G/H})\leq H^2(G,J_{G/H})\simeq M(G)$. 
\end{theorem}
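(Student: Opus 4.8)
The plan is to reduce Theorem \ref{th6.2} to an already-established criterion rather than to prove anything new from scratch. The statement asserts two things: first, that $\Sha^2_\omega(G,J_{G/H})\leq H^2(G,J_{G/H})\simeq M(G)$ for the listed pairs $(G,H)$; and second, implicitly, that the groups in Table $2$ are precisely the metacyclic transitive $G=nTm\leq S_n$ with $2\leq n\leq 30$ for which $M(G)\neq 0$ but $M(H)=0$. The isomorphism $H^2(G,J_{G/H})\simeq M(G)$ together with the containment $\Sha^2_\omega(G,J_{G/H})\leq H^2(G,J_{G/H})$ is exactly the conclusion of Proposition \ref{prop4.1} (2), which requires the two hypotheses $[G,G]\cap H=[H,H]$ and $M(H)=0$. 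So the whole proof amounts to verifying these two hypotheses for every row of Table $2$.

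First I would handle $M(H)=0$: this is immediate from the table construction, since the rows of Table $2$ are defined by the condition $M(H)=0$, and the GAP verification of this is recorded in Section \ref{GAPcomp}. Next I would verify $[G,G]\cap H=[H,H]$ for each pair. Here the cleanest route is to invoke Theorem \ref{thmain}: since every $G$ in the table is metacyclic, Theorem \ref{thmain} already establishes that $H$ is cyclic (hence $[H,H]=\{1\}$ and $M(H)=0$ automatically) and that $H^2(G,\bZ)\simeq G^{ab}\xrightarrow{\mathrm{res}}H^2(H,\bZ)\simeq H^{ab}$ is surjective, which by Proposition \ref{prop4.1} (0) is equivalent to $[G,G]\cap H=[H,H]$. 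Thus both hypotheses of Proposition \ref{prop4.1} (2) are met for free once we know $G$ is metacyclic. Applying Proposition \ref{prop4.1} (2) then yields $\Sha^2_\omega(G,J_{G/H})\leq H^2(G,J_{G/H})\xrightarrow[\sim]{\delta}H^3(G,\bZ)\simeq M(G)$, which is the asserted conclusion.

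The remaining content is the computational claim that Table $2$ lists all such pairs. For this I would run, over all transitive groups $G=nTm\leq S_n$ with $2\leq n\leq 30$ in GAP's transitive groups library, a filter selecting those $G$ that are metacyclic (checking for a cyclic normal subgroup with cyclic quotient), then computing $M(G)$ via \texttt{AbelianInvariantsMultiplier} and discarding those with $M(G)=0$ (already covered by Theorem \ref{th6.1}), and finally, for each point stabilizer $H=G\cap S_{n-1}$, computing $M(H)$ and retaining only the pairs with $M(H)=0$. Recording the output produces Table $2$; the details of this computation are deferred to Section \ref{GAPcomp}, and the GAP code is available as \texttt{HNP.gap} in \cite{Norm1ToriHNP}.

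The main obstacle is not conceptual — the cohomological input is entirely supplied by Proposition \ref{prop4.1} and Theorem \ref{thmain} — but rather bookkeeping: one must be sure that the metacyclicity test in GAP is correct (a group being metacyclic is equivalent to possessing a cyclic normal subgroup with cyclic quotient, but one must search over all normal subgroups, not merely Sylow or derived ones) and that the point stabilizer $H$ is taken consistently up to conjugacy so that $\bigcap_{\sigma\in G}H^\sigma=\{1\}$ holds, which is automatic for a transitive action. Once the table is correct, the theorem follows with no further calculation.
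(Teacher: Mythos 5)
Your proposal is correct and follows essentially the same route as the paper: enumerate the metacyclic transitive pairs $(G,H)$ with $M(G)\neq 0$, $M(H)=0$ by GAP (Section \ref{GAPcomp}) and then apply Proposition \ref{prop4.1} (2), whose hypotheses $[G,G]\cap H=[H,H]$ and $M(H)=0$ hold for metacyclic $G$ exactly as you argue via Theorem \ref{thmain}. The only cosmetic difference is that the paper's GAP filter also verifies ${\rm Intersection(DerivedSubgroup}(G),H)={\rm DerivedSubgroup}(H)$ directly, whereas you deduce this condition (and the cyclicity of $H$) from the metacyclic structure, which is equally valid.
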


\newpage 
\begin{center}
Table 1: $G=nTm\leq S_n$ $(2\leq n\leq 30)$: metacyclic with $M(G)=0$ 
and $H\leq G$ with $[G:H]=n$ 
which satisfy  
$\Sha^2_\omega(G,J_{G/H})=H^2(G,J_{G/H})=0$ as in Theorem 4.2
\vspace*{2mm}\\
\renewcommand{\arraystretch}{1.05}
{\small 
\begin{tabular}{lll} 
$nTm$ & $G$ & $H$\\\hline
$2T1$ & $C_2$ & $\{1\}$\\
$3T1$ & $C_3$ & $\{1\}$\\
$3T2$ & $S_3\simeq C_3\rtimes C_2$ & $C_2$\\
$4T1$ & $C_4$ & $\{1\}$\\
$5T1$ & $C_5$ & $\{1\}$\\
$5T2$ & $D_5\simeq C_5\rtimes C_2$ & $C_2$\\
$5T3$ & $F_{20}\simeq C_5\rtimes C_4$ & $C_4$\\
$6T1$ & $C_6$ & $\{1\}$\\
$6T2$ & $S_3\simeq C_3\rtimes C_2$ & $\{1\}$\\
$6T5$ & $C_3\times S_3$ & $C_3$\\
$7T1$ & $C_7$ & $\{1\}$\\
$7T2$ & $D_7\simeq C_7\rtimes C_2$ & $C_2$\\
$7T3$ & $F_{21}\simeq C_7\rtimes C_3$ & $C_3$\\
$7T4$ & $F_{42}\simeq C_7\rtimes C_6$ & $C_6$\\
$8T1$ & $C_8$ & $\{1\}$\\
$8T5$ & $Q_8$ & $\{1\}$\\
$8T7$ & $M_{16}\simeq C_8\rtimes C_2$ & $C_2$\\
$8T8$ & $QD_8\simeq C_8\rtimes C_2$ & $C_2$\\
$9T1$ & $C_9$ & $\{1\}$\\
$9T3$ & $D_9\simeq C_9\rtimes C_2$ & $C_2$\\
$9T4$ & $C_3\times S_3$ & $C_2$\\
$9T6$ & $E_9(27)\simeq C_9\rtimes C_3$ & $C_3$\\
$9T10$ & $C_9\rtimes C_6$ & $C_6$\\
$10T1$ & $C_{10}$ & $\{1\}$\\
$10T2$ & $D_5\simeq C_5\rtimes C_2$ & $\{1\}$\\
$10T4$ & $F_{20}\simeq C_5\rtimes C_4$ & $C_2$\\
$10T6$ & $C_5\times D_5$ & $C_5$\\
$11T1$ & $C_{11}$ & $\{1\}$\\
$11T2$ & $D_{11}\simeq C_{11}\rtimes C_2$ & $C_2$\\
$11T3$ & $F_{55}\simeq C_{11}\rtimes C_5$ & $C_5$\\
$11T4$ & $F_{110}\simeq C_{11}\rtimes C_{10}$ & $C_{10}$\\
$12T1$ & $C_{12}$ & $\{1\}$\\
$12T5$ & $Q_{12}\simeq C_3\rtimes C_4$ & $\{1\}$\\
$12T19$ & $C_3\times Q_{12}$ & $C_3$\\
$13T1$ & $C_{13}$ & $\{1\}$\\
$13T2$ & $D_{13}\simeq C_{13}\rtimes C_2$ & $C_2$\\
$13T3$ & $F_{39}\simeq C_{13}\rtimes C_3$ & $C_3$\\
$13T4$ & $F_{52}\simeq C_{13}\rtimes C_4$ & $C_4$\\
$13T5$ & $F_{78}\simeq C_{13}\rtimes C_6$ & $C_6$\\
$13T6$ & $F_{156}\simeq C_{13}\rtimes C_{12}$ & $C_{12}$\\
$14T1$ & $C_{14}$ & $\{1\}$\\
\end{tabular}\hspace*{5mm}
\begin{tabular}{lll}
$nTm$ & $G$ & $H$\\\hline
$14T2$ & $D_7\simeq C_7\rtimes C_2$ & $\{1\}$\\
$14T4$ & $F_{42}\simeq C_7\rtimes C_6$ & $C_3$\\
$14T5$ & $C_2\times F_{21}$ & $C_3$\\
$14T8$ & $C_7\times D_7$ & $C_7$\\
$15T1$ & $C_{15}$ & $\{1\}$\\
$15T2$ & $D_{15}\simeq C_{15}\rtimes C_2$ & $C_2$\\
$15T3$ & $C_3\times D_5$ & $C_2$\\
$15T4$ & $C_5\times S_3$ & $C_2$\\
$15T6$ & $C_{15}\rtimes C_4$ & $C_4$\\
$15T8$ & $C_3\times F_{20}$ & $C_4$\\
$16T1$ & $C_{16}$ & $\{1\}$\\
$16T6$ & $M_{16}\simeq C_8\rtimes C_2$ & $\{1\}$\\
$16T12$ & $QD_8\simeq C_8\rtimes C_2$ & $\{1\}$\\
$16T14$ & $Q_{16}$ & $\{1\}$\\
$16T22$ & $M_{32}\simeq C_{16}\rtimes C_2$ & $C_2$\\
$16T49$ & $C_4.(C_4\times C_2)$ & $C_2$\\
$16T55$ & $QD_{16}\simeq C_{16}\rtimes C_2$ & $C_2$\\
$16T124$ & $C_4.(C_8\times C_2)$ & $C_4$\\
$16T125$ & $C_{16}\rtimes C_4$ & $C_4$\\
$16T136$ & $C_{16}\rtimes C_4$ & $C_4$\\ 
$17T1$ & $C_{17}$ & $\{1\}$\\
$17T2$ & $D_{17}\simeq C_{17}\rtimes C_2$ & $C_2$\\
$17T3$ & $F_{68}\simeq C_{17}\rtimes C_4$ & $C_4$\\
$17T4$ & $F_{136}\simeq C_{17}\rtimes C_8$ & $C_8$\\
$17T5$ & $F_{272}\simeq C_{17}\rtimes C_{16}$ & $C_{16}$\\
$18T1$ & $C_{18}$ & $\{1\}$\\
$18T3$ & $C_3\times S_3$ & $\{1\}$\\
$18T5$ & $D_9\simeq C_9\rtimes C_2$ & $\{1\}$\\
$18T14$ & $C_2\times E_9(27)$ & $C_3$\\
$18T16$ & $C_9\times S_3$ & $C_3$\\
$18T18$ & $C_9\rtimes C_6$ & $C_3$\\
$18T19$ & $C_3\times D_9$ & $C_3$\\
$18T74$ & $C_9\times D_9$ & $C_9$\\
$18T80$ & $C_9^2\rtimes C_2$ & $C_9$\\
$19T1$ & $C_{19}$ & $\{1\}$\\
$19T2$ & $D_{19}\simeq C_{19}\rtimes C_2$ & $C_2$\\
$19T3$ & $F_{57}\simeq C_{19}\rtimes C_3$ & $C_3$\\
$19T4$ & $F_{114}\simeq C_{19}\rtimes C_6$ & $C_6$\\
$19T5$ & $F_{171}\simeq C_{19}\rtimes C_9$ & $C_9$\\
$19T6$ & $F_{342}\simeq C_{19}\rtimes C_{18}$ & $C_{18}$\\
$20T1$ & $C_{20}$ & $\{1\}$
\end{tabular}
}

\newpage
Table 1 (continued): $G=nTm\leq S_n$ $(2\leq n\leq 30)$: metacyclic with $M(G)=0$ 
and $H\leq G$ with $[G:H]=n$ 
which satisfy  
$\Sha^2_\omega(G,J_{G/H})=H^2(G,J_{G/H})=0$ as in Theorem 4.2
\vspace*{2mm}\\
\renewcommand{\arraystretch}{1.05}
{\small 
\begin{tabular}{lll}
$nTm$ & $G$ & $H$\\\hline
$20T2$ & $Q_{20}\simeq C_5\rtimes C_4$ & $\{1\}$\\
$20T5$ & $F_{20}\simeq C_5\rtimes C_4$ & $\{1\}$\\
$20T25$ & $C_5\times Q_{20}$ & $C_5$\\
$20T29$ & $C_5\times F_{20}$ & $C_5$\\
$21T1$ & $C_{21}$ & $\{1\}$\\
$21T2$ & $F_{21}\simeq C_7\rtimes C_3$ & $\{1\}$\\
$21T3$ & $C_3\times D_7$ & $C_2$\\
$21T4$ & $F_{42}\simeq C_7\rtimes C_6$ & $C_2$\\
$21T5$ & $D_{21}\simeq C_{21}\rtimes C_2$ & $C_2$\\
$21T6$ & $C_7\times S_3$ & $C_2$\\
$21T10$ & $C_7\times (C_3\times S_3)$ & $C_6$\\
$21T11$ & $F_{21}\times S_3$ & $C_6$\\
$21T13$ & $C_7\times F_{21}$ & $C_7$\\
$22T1$ & $C_{22}$ & $\{1\}$\\
$22T2$ & $D_{11}\simeq C_{11}\rtimes C_2$ & $\{1\}$\\
$22T4$ & $F_{110}\simeq C_{11}\rtimes C_{10}$ & $C_5$\\
$22T5$ & $C_2\times F_{55}$ & $C_5$\\
$22T7$ & $C_{11}\times D_{11}$ & $C_{11}$\\
$23T1$ & $C_{23}$ & $\{1\}$\\
$23T2$ & $D_{23}\simeq C_{23}\rtimes C_2$ & $C_2$\\
$23T3$ & $F_{253}\simeq C_{23}\rtimes C_{11}$ & $C_{11}$\\
$23T4$ & $F_{206}\simeq C_{23}\rtimes C_{22}$ & $C_{22}$\\
$24T1$ & $C_{24}$ & $\{1\}$\\
$24T4$ & $C_3\times Q_8$ & $\{1\}$\\
$24T5$ & $C_3\rtimes Q_8$ & $\{1\}$\\
$24T8$ & $C_3\rtimes C_8$ & $\{1\}$\\
$24T16$ & $C_3\times M_{16}$ & $C_2$\\
$24T20$ & $(C_3\rtimes C_8)\rtimes C_2$ & $C_2$\\
$24T31$ & $C_{24}\rtimes C_2$ & $C_2$\\
$24T35$ & $QD_{24}\simeq C_{24}\rtimes C_2$ & $C_2$\\
$24T41$ & $C_3\times QD_8$ & $C_2$\\
$24T64$ & $C_3\times (C_3\rtimes Q_8)$ & $C_3$\\
$24T69$ & $C_3\times (C_3\rtimes C_8)$ & $C_3$\\
$24T211$ & $C_3\times ((C_3\rtimes C_8)\rtimes C_2)$ & $C_6$\\
$25T1$ & $C_{25}$ & $\{1\}$\\
$25T3$ & $C_5\times D_5$ & $C_2$\\
$25T4$ & $D_{25}\simeq C_{25}\rtimes C_2$ & $C_2$\\
$25T7$ & $C_5\times F_{20}$ & $C_4$\\
$25T8$ & $C_{25}\rtimes C_4$ & $C_4$\\
$25T13$ & $E_{25}(125)\simeq C_{25}\rtimes C_5$ & $C_5$\\
$25T25$ & $E_{25}(125)\rtimes C_2$ & $C_{10}$
\end{tabular}\hspace*{5mm}
\begin{tabular}{lll}
$nTm$ & $G$ & $H$\\\hline
$25T40$ & $E_{25}(125)\rtimes C_4$ & $C_{20}$\\
$26T1$ & $C_{26}$ & $\{1\}$\\
$26T2$ & $D_{13}\simeq C_{13}\rtimes C_2$ & $\{1\}$\\
$26T4$ & $F_{52}\simeq C_{13}\rtimes C_4$ & $C_2$\\
$26T5$ & $C_2\times F_{39}$ & $C_3$\\
$26T6$ & $F_{78}\simeq C_{13}\rtimes C_6$ & $C_3$\\
$26T8$ & $F_{156}\simeq C_{13}\rtimes C_{12}$ & $C_6$\\
$26T11$ & $C_{13}\times D_{13}$ & $C_{13}$\\
$27T1$ & $C_{27}$ & $\{1\}$\\
$27T5$ & $E_9(27)\simeq C_9\rtimes C_3$ & $\{1\}$\\
$27T8$ & $D_{27}\simeq C_{27}\rtimes C_2$ & $C_2$\\
$27T9$ & $C_3\times D_9$ & $C_2$\\
$27T12$ & $C_9\times S_3$ & $C_2$\\
$27T14$ & $C_9\rtimes C_6$ & $C_2$\\
$27T22$ & $C_{27}\rtimes C_3$ & $C_3$\\
$27T55$ & $(C_{27}\rtimes C_3)\rtimes C_2$ & $C_6$\\
$27T107$ & $C_{27}\rtimes C_9$ & $C_9$\\
$27T176$ & $(C_{27}\rtimes C_9)\rtimes C_2$ & $C_{18}$\\
$28T1$ & $C_{28}$ & $\{1\}$\\
$28T3$ & $Q_{28}\simeq C_7\rtimes C_4$ & $\{1\}$\\
$28T12$ & $C_7\rtimes C_{12}$ & $C_3$\\
$28T13$ & $C_4\times F_{21}$ & $C_3$\\
$28T33$ & $C_7\times Q_{28}$ & $C_7$\\
$29T1$ & $C_{29}$ & $\{1\}$\\
$29T2$ & $D_{29}\simeq C_{29}\rtimes C_2$ & $C_2$\\
$29T3$ & $F_{116}\simeq C_{29}\rtimes C_4$ & $C_4$\\
$29T4$ & $F_{203}\simeq C_{29}\rtimes C_7$ & $C_7$\\
$29T5$ & $F_{406}\simeq C_{29}\rtimes C_{14}$ & $C_{14}$\\
$29T6$ & $F_{812}\simeq C_{29}\rtimes C_{28}$ & $C_{28}$\\
$30T1$ & $C_{30}$ & $\{1\}$\\
$30T2$ & $C_5\times S_3$ & $\{1\}$\\
$30T3$ & $D_{15}\simeq C_{15}\rtimes C_2$ & $\{1\}$\\
$30T4$ & $C_3\times D_5$ & $\{1\}$\\
$30T6$ & $C_{15}\rtimes C_4$ & $C_2$\\
$30T7$ & $C_3\times F_{20}$ & $C_2$\\
$30T15$ & $C_{15}\times S_3$ & $C_3$\\
$30T16$ & $C_3\times D_{15}$ & $C_3$\\
$30T36$ & $C_5\times D_{15}$ & $C_5$\\
$30T39$ & $C_{15}\times D_5$ & $C_5$\\
$30T47$ & $C_3\times (C_{15}\rtimes C_4)$ & $C_6$\\
$30T104$ & $C_{15}\times D_{15}$ & $C_{15}$\\
\end{tabular}
}
\end{center}

\newpage
\begin{center}
Table 2: $G=nTm\leq S_n$ $(2\leq n\leq 30)$: metacyclic  
with $M(G)\neq 0$ 
and $H\leq G$ with $[G:H]=n,$\\ 
$M(H)=0$ 
which satisfy 
$\Sha^2_\omega(G,J_{G/H})\leq H^2(G,J_{G/H})\simeq M(G)$ 
as in Proposition \ref{prop4.1} (2)
\vspace*{2mm}\\
\renewcommand{\arraystretch}{1.05}
{\small 
\begin{tabular}{llll} 
$nTm$ & $G$ & $H$ & $M(G)$\\\hline
$4T2$ & $C_2^2$ & $\{1\}$ & $\bZ/2\bZ$\\
$4T3$ & $D_4\simeq C_4\rtimes C_2$ & $C_2$ & $\bZ/2\bZ$\\
$6T3$ & $D_6\simeq C_6\rtimes C_2$ & $C_2$ & $\bZ/2\bZ$\\
$8T2$ & $C_4\times C_2$ & $\{1\}$ & $\bZ/2\bZ$\\
$8T4$ & $D_4\simeq C_4\rtimes C_2$ & $\{1\}$ & $\bZ/2\bZ$\\
$8T6$ & $D_8\simeq C_8\rtimes C_2$ & $C_2$ & $\bZ/2\bZ$\\
$9T2$ & $C_3^2$ & $\{1\}$ & $\bZ/3\bZ$\\
$10T3$ & $D_{10}\simeq C_{10}\rtimes C_2$ & $C_2$ & $\bZ/2\bZ$\\
$10T5$ & $C_2\times F_{20}$ & $C_4$ & $\bZ/2\bZ$\\
$12T2$ & $C_6\times C_2$ & $\{1\}$ & $\bZ/2\bZ$\\
$12T3$ & $D_6\simeq C_6\rtimes C_2$ & $\{1\}$ & $\bZ/2\bZ$\\
$12T11$ & $C_4\times S_3$ & $C_2$ & $\bZ/2\bZ$\\
$12T12$ & $D_{12}\simeq C_{12}\rtimes C_2$ & $C_2$ & $\bZ/2\bZ$\\
$12T14$ & $C_3\times D_4$ & $C_2$ & $\bZ/2\bZ$\\
$12T18$ & $C_6\times S_3$ & $C_3$ & $\bZ/2\bZ$\\
$14T3$ & $D_{14}\simeq C_{14}\rtimes C_2$ & $C_2$ & $\bZ/2\bZ$\\
$14T7$ & $C_2\times F_{42}$ & $C_6$ & $\bZ/2\bZ$\\
$16T4$ & $C_4^2$ & $\{1\}$ & $\bZ/4\bZ$\\
$16T5$ & $C_8\times C_2$ & $\{1\}$ & $\bZ/2\bZ$\\
$16T8$ & $C_4\rtimes C_4$ & $\{1\}$ & $\bZ/2\bZ$\\
$16T13$ & $D_8\simeq C_8\rtimes C_2$ & $\{1\}$ & $\bZ/2\bZ$\\
$16T56$ & $D_{16}\simeq C_{16}\rtimes C_2$ & $C_2$ & $\bZ/2\bZ$\\
$18T2$ & $C_6\times C_3$ & $\{1\}$ & $\bZ/3\bZ$\\
$18T6$ & $C_6\times S_3$ & $C_2$ & $\bZ/2\bZ$\\
$18T13$ & $D_{18}\simeq C_{18}\rtimes C_2$ & $C_2$ & $\bZ/2\bZ$\\
$18T45$ & $C_2\times (C_9\rtimes C_6)$ & $C_6$ & $\bZ/2\bZ$\\
$20T3$ & $C_{10}\times C_2$ & $\{1\}$ & $\bZ/2\bZ$\\
$20T4$ & $D_{10}\simeq C_{10}\rtimes C_2$ & $\{1\}$ & $\bZ/2\bZ$\\
$20T6$ & $C_4\times D_5$ & $C_2$ & $\bZ/2\bZ$\\
$20T9$ & $C_2\times F_{20}$ & $C_2$ & $\bZ/2\bZ$\\
$20T10$ & $D_{20}\simeq C_{20}\rtimes C_2$ & $C_2$ & $\bZ/2\bZ$\\
$20T12$ & $C_5\times D_4$ & $C_2$ & $\bZ/2\bZ$\\
$20T13$ & $C_2\times F_{20}$ & $C_2$ & $\bZ/2\bZ$\\
$20T18$ & $C_{20}\rtimes C_4$ & $C_4$ & $\bZ/2\bZ$\\
$20T20$ & $C_4\times F_{20}$ & $C_4$ & $\bZ/4\bZ$\\
$20T24$ & $C_{10}\times D_5$ & $C_5$ & $\bZ/2\bZ$\\
$21T7$ & $C_3\times F_{21}$ & $C_3$ & $\bZ/3\bZ$
\end{tabular}\hspace*{5mm}
\begin{tabular}{llll}
$nTm$ & $G$ & $H$ & $M(G)$\\\hline
$21T9$ & $C_3\times F_{42}$ & $C_6$ & $\bZ/3\bZ$\\
$22T3$ & $D_{22}\simeq C_{22}\rtimes C_2$ & $C_2$ & $\bZ/2\bZ$\\
$22T6$ & $C_2\times F_{110}$ & $C_{10}$ & $\bZ/2\bZ$\\
$24T2$ & $C_{12}\times C_2$ & $\{1\}$ & $\bZ/2\bZ$\\
$24T6$ & $C_2\times Q_{12}\simeq C_2\times (C_3\rtimes C_4)$ & $\{1\}$ & $\bZ/2\bZ$\\
$24T12$ & $C_4\times S_3$ & $\{1\}$ & $\bZ/2\bZ$\\
$24T13$ & $D_{12}\simeq C_{12}\rtimes C_2$ & $\{1\}$ & $\bZ/2\bZ$\\
$24T15$ & $C_3\times D_4$ & $\{1\}$ & $\bZ/2\bZ$\\
$24T32$ & $C_8\times S_3$ & $C_2$ & $\bZ/2\bZ$\\
$24T34$ & $D_{24}\simeq C_{24}\rtimes C_2$ & $C_2$ & $\bZ/2\bZ$\\
$24T40$ & $C_3\times D_8$ & $C_2$ & $\bZ/2\bZ$\\
$24T65$ & $C_{12}\times S_3$ & $C_3$ & $\bZ/2\bZ$\\
$24T66$ & $C_6\times Q_{12}\simeq C_6\times (C_3\rtimes C_4)$ & $C_3$ & $\bZ/2\bZ$\\
$24T67$ & $C_3\times D_{12}$ & $C_3$ & $\bZ/2\bZ$\\
$25T2$ & $C_5^2$ & $\{1\}$ & $\bZ/5\bZ$\\
$26T3$ & $D_{26}\simeq C_{26}\rtimes C_2$ & $C_2$ & $\bZ/2\bZ$\\
$26T7$ & $C_2\times F_{52}$ & $C_4$ & $\bZ/2\bZ$\\
$26T9$ & $C_2\times F_{78}$ & $C_6$ & $\bZ/2\bZ$\\
$26T10$ & $C_2\times F_{156}$ & $C_{12}$ & $\bZ/2\bZ$\\
$27T2$ & $C_9\times C_3$ & $\{1\}$ & $\bZ/3\bZ$\\
$28T2$ & $C_{14}\times C_2$ & $\{1\}$ & $\bZ/2\bZ$\\
$28T4$ & $D_{14}\simeq C_{14}\rtimes C_2$ & $\{1\}$ & $\bZ/2\bZ$\\
$28T5$ & $C_7\times D_4$ & $C_2$ & $\bZ/2\bZ$\\
$28T8$ & $C_4\times D_7$ & $C_2$ & $\bZ/2\bZ$\\
$28T10$ & $D_{28}\simeq C_{28}\rtimes C_2$ & $C_2$ & $\bZ/2\bZ$\\
$28T14$ & $C_2^2\times F_{21}$ & $C_3$ & $\bZ/2\bZ$\\
$28T15$ & $C_2\times F_{42}$ & $C_3$ & $\bZ/2\bZ$\\
$28T22$ & $D_4\times F_{21}$ & $C_6$ & $\bZ/2\bZ$\\
$28T23$ & $C_7\rtimes (C_3\times D_4)$ & $C_6$ & $\bZ/2\bZ$\\
$28T26$ & $C_4\times F_{42}$ & $C_6$ & $\bZ/2\bZ$\\
$28T34$ & $C_{14}\times D_7$ & $C_7$ & $\bZ/2\bZ$\\
$30T5$ & $C_6\times D_5$ & $C_2$ & $\bZ/2\bZ$\\
$30T12$ & $C_{10}\times S_3$ & $C_2$ & $\bZ/2\bZ$\\
$30T14$ & $D_{30}\simeq C_{30}\rtimes C_2$ & $C_2$ & $\bZ/2\bZ$\\
$30T17$ & $C_2\times (C_{15}\rtimes C_4)$ & $C_4$ & $\bZ/2\bZ$\\
$30T26$ & $C_6\times F_{20}$ & $C_4$ & $\bZ/2\bZ$\\
\\
\end{tabular}
}
\end{center}

\section{Examples of Proposition \ref{prop4.1}: Not metacyclic groups $G=nTm\leq S_n$ $(2\leq n\leq 19)$ and $H\leq G$ with $[G:H]=n$}\label{S7}

In this section, we provide some examples of 
Proposition \ref{prop4.1} (1), (2) for 
not metacyclic groups $G$ 
(see Section \ref{S6} for metacyclic groups $G$).
%
%

By using GAP (\cite{GAP}), we can obtain all the transitive subgroup 
$G=nTm\leq S_n$ $(2\leq n\leq 19)$ which is not metacyclic 
with trivial Schur multiplier $M(G)=0$ and 
$H\leq G$ with $[G:H]=n$ and $[G,G]\cap H=[H,H]$
as in Table $3$ (see Section \ref{GAPcomp} for GAP computations). 
Then by Proposition \ref{prop4.1} (1) we get: 
\begin{theorem}\label{th7.1}
Let $k$ be a field, 
$K/k$ be a separable field extension with $[K:k]=n$ 
and $L/k$ be the Galois closure of $K/k$. 
Assume that $G={\rm Gal}(L/k)=nTm$ $(2\leq n\leq 19)$ 
is a transitive subgroup of $S_n$ which is not metacyclic 
with trivial Schur multiplier $M(G)=0$ 
and $H=Gal(L/K)$ with $[G:H]=n$ and $[G,G]\cap H=[H,H]$. 
Then $G$ is given as in Table $3$ which satisfies 
$\Sha^2_\omega(G,J_{G/H})=H^2(G,J_{G/H})\simeq M(G)=0$. 
In particular, when $k$ is a global field, 
$A(T)=0$, i.e. 
$T$ has the weak approximation property, 
and $\Sha(T)=0$, i.e. the Hasse norm principle holds for $K/k$ 
$($that is, Hasse principle holds for all torsors $E$ under $T$$)$ 
where $T=R^{(1)}_{K/k}(\bG_{m,K})$ is the norm one torus of $K/k$ 
$($see Section \ref{S3} and Ono's theorem $($Theorem \ref{thOno}$))$. 
\end{theorem}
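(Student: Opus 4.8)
The plan is to obtain Theorem \ref{th7.1} as a direct application of Proposition \ref{prop4.1} (1), the only genuinely new ingredient being a finite computer search that produces Table $3$. Note first that since $G=nTm$ is transitive and acts faithfully on the $n$ cosets of a point stabilizer $H$, we automatically have $\bigcap_{\sigma\in G}H^\sigma=\{1\}$, so $H$ contains no nontrivial normal subgroup of $G$ and $[G:H]=n$; this is precisely the standing hypothesis of Proposition \ref{prop4.1}.

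First I would, for each $n$ with $2\leq n\leq 19$, loop over the transitive groups $G=nTm\leq S_n$ in the GAP transitive groups library, take $H$ to be a point stabilizer, discard the metacyclic $G$ (these are treated in Theorem \ref{th6.1}), and among the remaining non-metacyclic groups retain those with trivial Schur multiplier $M(G)\simeq H^3(G,\bZ)=0$ (computed via GAP's Schur multiplier routines). Finally, for each surviving pair $(G,H)$ I would compute the commutator subgroups $[G,G]$ and $[H,H]$ and test whether $[G,G]\cap H=[H,H]$. The pairs passing all of these tests are exactly those displayed in Table $3$; the GAP code is available as \texttt{HNP.gap} in \cite{Norm1ToriHNP}, and the details of the run appear in Section \ref{GAPcomp}.

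Next, fix a pair $(G,H)$ from Table $3$. By Proposition \ref{prop4.1} (0), the hypothesis $[G,G]\cap H=[H,H]$ is equivalent to surjectivity of the restriction map $H^1(G,\bQ/\bZ)\simeq G^{ab}\to H^1(H,\bQ/\bZ)\simeq H^{ab}$. Plugging this into the exact sequence
\begin{align*}
H^1(G,\bQ/\bZ)\simeq G^{ab}&\xrightarrow{\rm res}H^1(H,\bQ/\bZ)\simeq H^{ab}\to H^2(G,J_{G/H})\\
&\xrightarrow{~\delta~}H^3(G,\bZ)\simeq M(G)\xrightarrow{\rm res}H^3(H,\bZ)\simeq M(H)
\end{align*}
of Proposition \ref{prop4.1}, surjectivity of the first arrow forces $\delta$ to be injective, so $H^2(G,J_{G/H})$ embeds into $H^3(G,\bZ)\simeq M(G)$; since $M(G)=0$ this gives $H^2(G,J_{G/H})=0$, and as $\Sha^2_\omega(G,J_{G/H})\leq H^2(G,J_{G/H})$ we conclude $\Sha^2_\omega(G,J_{G/H})=H^2(G,J_{G/H})\simeq M(G)=0$. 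When $k$ is a global field, the last paragraph of Proposition \ref{prop4.1} (which invokes Theorem \ref{thV} and Theorem \ref{thCTS87}) then yields $A(T)=0$ and $\Sha(T)=0$ for $T=R^{(1)}_{K/k}(\bG_{m,K})$ with $\widehat{T}\simeq J_{G/H}$, and by Ono's theorem (Theorem \ref{thOno}) the vanishing $\Sha(T)=0$ is exactly the Hasse norm principle for $K/k$.

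The only real obstacle is the reliability and completeness of the computer search: it rests on GAP correctly evaluating Schur multipliers of non-metacyclic transitive groups (which can be delicate) and on having exhausted the range $2\leq n\leq 19$. As an internal consistency check, whenever $G$ admits a normal subgroup $H^\prime$ with $G/H^\prime$ abelian and $H^\prime\cap H=\{1\}$, Proposition \ref{prop4.1} (3) guarantees $[G,G]\cap H=[H,H]$ a priori, so one can cross-validate the commutator test against the existence of such an $H^\prime$; and the values $M(G)=0$ can be re-derived by hand for the smaller groups in the list. There is no genuinely hard mathematical step here, since Proposition \ref{prop4.1} (1) does all the cohomological work.
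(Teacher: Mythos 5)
Your proposal is correct and follows essentially the same route as the paper: a GAP enumeration over the transitive groups $nTm$ ($2\leq n\leq 19$) testing non-metacyclicity, $M(G)=0$ and $[G,G]\cap H=[H,H]$ to produce Table $3$, followed by an application of Proposition \ref{prop4.1} (1) (whose exact-sequence argument you merely re-expand) and its global-field consequence via Ono's theorem. No substantive difference from the paper's proof.
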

%
\begin{remark}
Hoshi, Kanai and Yamasaki \cite{HKY22}, \cite{HKY23}, \cite{HKY25} 
determined $H^1(k,{\rm Pic}\, \overline{X})\simeq \Sha^2_\omega(G,J_{G/H})$ 
where $T=R^{(1)}_{K/k}(\bG_{m,K})$ is the norm one torus of $K/k$ 
with $[K:k]=n\leq 16$, $\widehat{T}\simeq J_{G/H}$, 
$X$ is a smooth $k$-compactification of $T$ and 
$\overline{X}=X\times_k\overline{k}$. 
Moreover, 
Hoshi, Kanai and Yamasaki \cite{HKY22}, \cite{HKY23} 
gives a necessary and sufficient condition 
for the Hasse norm principle for $K/k$, i.e. 
$\Sha(T)$ where $T=R^{(1)}_{K/k}(\bG_{m,K})$ is the norm one torus of $K/k$ 
with $[K:k]=n\leq 15$. 
\end{remark}

Similarly, by using GAP (\cite{GAP}), 
we can obtain all the transitive subgroup $G=nTm\leq S_n$ $(2\leq n\leq 19)$ 
with $M(G)\neq 0$ but $M(H)=0$ where $H\leq G$ with $[G:H]=n$ 
and $[G,G]\cap H=[H,H]$ 
as in Table $4$ (see Section \ref{GAPcomp} for GAP computations). 
Hence by Proposition \ref{prop4.1} (2) we get: 
\begin{theorem}\label{th7.2}
Let $k$ be a field, 
$K/k$ be a separable field extension with $[K:k]=n$ 
and $L/k$ be the Galois closure of $K/k$. 
Assume that $G={\rm Gal}(L/k)=nTm$ $(2\leq n\leq 19)$ 
is a transitive subgroup of $S_n$ which is not metacyclic 
with $M(G)\neq 0$ but $M(H)=0$ where $H=Gal(L/K)$ with $[G:H]=n$ 
and $[G,G]\cap H=[H,H]$. 
Then $G$, $H$ and $M(G)$ are given as in Table $4$ which satisfy  
$\Sha^2_\omega(G,J_{G/H})\leq H^2(G,J_{G/H})\simeq M(G)$. 
\end{theorem}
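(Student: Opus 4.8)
The plan is to deduce the cohomological statement directly from Proposition \ref{prop4.1} (2) and to obtain Table $4$ from a finite GAP enumeration; no further input is needed. First, recall that for any finite separable $K/k$ with Galois closure $L/k$, $G={\rm Gal}(L/k)$, $H={\rm Gal}(L/K)\lneq G$, Proposition \ref{prop4.1} gives the exact sequence
\[
G^{ab}\xrightarrow{\rm res}H^{ab}\to H^2(G,J_{G/H})\xrightarrow{~\delta~}H^3(G,\bZ)\simeq M(G)\xrightarrow{\rm res}H^3(H,\bZ)\simeq M(H),
\]
and by part (0) the leftmost map is surjective exactly when $[G,G]\cap H=[H,H]$. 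So, assuming $[G,G]\cap H=[H,H]$, the map $H^{ab}\to H^2(G,J_{G/H})$ is zero and $\delta$ is injective; assuming in addition $M(H)=0$, the rightmost restriction map vanishes and $\delta$ is also surjective. Hence $H^2(G,J_{G/H})\xrightarrow[\sim]{\delta}H^3(G,\bZ)\simeq M(G)$, and since $\Sha^2_\omega(G,J_{G/H})$ is by definition a subgroup of $H^2(G,J_{G/H})$, we get $\Sha^2_\omega(G,J_{G/H})\leq H^2(G,J_{G/H})\simeq M(G)$. Note this is genuinely an application of the general Proposition \ref{prop4.1} (2): since $G$ is \emph{not} metacyclic here, Theorem \ref{thmain} does not apply, and the condition $[G,G]\cap H=[H,H]$ must be imposed by hand rather than derived from a complement.

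Second, I would construct Table $4$ by a GAP computation. For each $n$ with $2\leq n\leq 19$ and each transitive group $G=nTm\leq S_n$, let $H$ be the point stabilizer (so $[G:H]=n$ and $\bigcap_{\sigma\in G}H^\sigma=\{1\}$ automatically). Then test the four conditions: (i) $G$ is not metacyclic; (ii) $M(G)\neq 0$; (iii) $M(H)=0$; (iv) $[G,G]\cap H=[H,H]$. Record the triples $(G,H,M(G))$ that pass all four; these are exactly the rows of Table $4$. Schur multipliers are computed with {\tt AbelianInvariantsMultiplier}; metacyclicity is decided by searching for a cyclic normal subgroup with cyclic quotient; and $[G,G]\cap H$, $[H,H]$ are direct subgroup computations. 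The enumeration is finite and feasible (the only sizable families of transitive groups below degree $20$ occur at $n=16$ and $n=18$), so this step is laborious bookkeeping rather than a conceptual difficulty.

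The main obstacle, such as it is, lies entirely in the second step: ensuring that the GAP run is exhaustive and that every entry of Table $4$ — the transitive-group label $nTm$, the isomorphism type of $G$ and of the stabilizer $H$, and the group $M(G)$ — is correctly transcribed, with no non-metacyclic transitive group of degree $\leq 19$ satisfying (i)--(iv) omitted. Once Table $4$ is verified, the theorem follows by applying the displayed chain of isomorphisms to each row, and nothing else (weak approximation, Ono's theorem, etc.) is needed, since the statement claims only the inequality $\Sha^2_\omega(G,J_{G/H})\leq H^2(G,J_{G/H})\simeq M(G)$ and not the Hasse norm principle (indeed $M(G)\neq 0$ here).
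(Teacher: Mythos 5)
Your proposal is correct and is essentially the paper's own argument: the paper obtains Table 4 by exactly the GAP search you describe (point stabilizer $H$, testing non-metacyclicity, $M(G)\neq 0$, $M(H)=0$, $[G,G]\cap H=[H,H]$) and then invokes Proposition \ref{prop4.1} (2), whose proof is the same exact-sequence argument you spell out ($\delta$ injective from surjectivity of $G^{ab}\to H^{ab}$, surjective since $M(H)=0$). Nothing further is needed, so your reconstruction matches the paper's route.
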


\newpage
\begin{center}
Table 3: $G=nTm\leq S_n$ $(2\leq n\leq 19)$: not metacyclic with $M(G)=0$ 
and $H\leq G$ with $[G:H]=n$, \\ 
$[G,G]\cap H=[H,H]$ which satisfy 
$\Sha^2_\omega(G,J_{G/H})=H^2(G,J_{G/H})=0$ as in 
Proposition \ref{prop4.1} (1) 
\vspace*{2mm}\\
\renewcommand{\arraystretch}{1.05}
{\small 
\begin{tabular}{lll} 
$nTm$ & $G$ & $H$\\\hline
$8T12$ & $\SL_2(\bF_3)\simeq Q_8\rtimes C_3$ & $C_3$\\
$8T23$ & $\GL_2(\bF_3)\simeq Q_8\rtimes S_3$ & $S_3\simeq C_3\rtimes C_2$\\
$8T25$ & $C_2^3\rtimes C_7$ & $C_7$\\
$8T36$ & $C_2^3\rtimes F_{21}$ & $F_{21}\simeq C_7\rtimes C_3$\\
$9T12$ & $(C_3^2\rtimes C_3)\rtimes C_2$ & $S_3\simeq C_3\rtimes C_2$\\
$9T15$ & $C_3^2\rtimes C_8$ & $C_8$\\
$9T19$ & $C_3^2\rtimes QD_8$ & $QD_8\simeq C_8\rtimes C_2$\\
$9T20$ & $(C_3^3\rtimes C_3)\rtimes C_2$ & $C_3\times S_3$\\
$9T26$ & $((C_3^2\rtimes Q_8)\rtimes C_3)\rtimes C_2$ & $\GL_2(\bF_3)\simeq Q_8\rtimes S_3$\\
$9T32$ & $\PSL_2(\bF_8)\rtimes C_3$ & $C_2^3\rtimes F_{21}$\\
$10T18$ & $C_5^2\rtimes C_8$ & $F_{20}\simeq C_5\rtimes C_4$\\
$12T46$ & $C_3^2\rtimes C_8$ & $S_3\simeq C_3\rtimes C_2$\\
$12T272$ & $M_{11}$ & $\PSL_2(\bF_{11})$\\
$14T11$ & $C_2^3\rtimes F_{21}$ & $A_4\simeq C_2^2\rtimes S_3$\\
$14T14$ & $C_7^2\rtimes C_6$ & $F_{21}\simeq C_7\rtimes C_3$\\
$14T18$ & $C_2\times (C_2^3\rtimes F_{21})$ & $C_2\times A_4$\\
$14T23$ & $C_7^2\rtimes C_{12}$ & $F_{42}\simeq C_7\rtimes C_6$\\
$15T13$ & $C_5^2\rtimes S_3$ & $D_5\simeq C_5\rtimes C_2$\\
$15T19$ & $C_5^2\rtimes C_{12}$ & $F_{20}\simeq C_5\rtimes C_4$\\
$15T32$ & $C_5\times (C_5^2\rtimes S_3)$ & $C_5\times D_{10}$\\
$15T38$ & $C_5^3\rtimes C_{12}$ & $C_5^2\rtimes C_4$\\
$15T41$ & $C_3^4\rtimes F_{20}$ & $C_3^3\rtimes C_4$\\
$15T56$ & $C_3\times (C_3^4\rtimes F_{20})$ & $C_3\times (C_3^3\rtimes C_4)$\\
$16T59$ & $C_2\times \SL_2(\bF_3)$ & $C_3$\\
$16T60$ & $((C_4\times C_2)\rtimes C_2)\rtimes C_3$ & $C_3$\\
$16T196$ & $C_2\times (C_2^3\rtimes C_7)$ & $C_7$\\
$16T439$ & $(C_4.C_4^2)\rtimes C_3$ & $A_4\simeq C_2^2\rtimes S_3$\\
$16T447$ & $C_2^4\rtimes C_{15}$ & $C_{15}$\\
$16T712$ & $C_2\times (C_2^3\rtimes F_{21})$ & $F_{21}\simeq C_7\rtimes C_3$\\
$16T728$ & $((C_4.C_4^2)\rtimes C_2)\rtimes C_3$ & $C_2\times A_4$\\
$16T732$ & $((C_4.C_4^2)\rtimes C_2)\rtimes C_3$ & $C_2\times A_4$\\
$16T773$ & $((C_4.C_4^2)\rtimes C_3)\rtimes C_2$ & $S_4\simeq C_2^2\rtimes S_3$\\
$16T777$ & $((C_2^4\rtimes C_5)\rtimes C_2)\rtimes C_3$ & $C_3\times D_5$\\
$16T1064$ & $((C_4.C_4^2)\rtimes C_3)\rtimes C_4$ & $A_4\rtimes C_4$\\
$16T1075$ & $((C_2\times (C_2^4\rtimes C_2))\rtimes C_2)\rtimes C_7$ & $C_2^3\rtimes C_7$\\
$16T1079$ & $(C_2^4\rtimes C_{15})\rtimes C_4$ & $C_{15}\rtimes C_4$\\
$16T1501$ & $((C_2\times (C_2^4\rtimes C_2))\rtimes C_2)\rtimes F_{21}$ & $C_2^3\rtimes F_{21}$\\
$16T1503$ & $((C_2\times (C_2^3\rtimes (C_2^2)))\rtimes C_2)\rtimes F_{21}$ & $C_2^3\rtimes F_{21}$\\
$16T1798$ & $(((C_2^6\rtimes C_7)\rtimes C_2)\rtimes C_7)\rtimes C_3$ & $C_2^3\rtimes (C_7^2\rtimes C_3)$\\
$17T8$ & $\PSL_2(\bF_{16})\rtimes C_4$ & $(C_2^4\rtimes C_{15})\rtimes C_4$\\
$18T28$ & $C_3^2\rtimes C_8$ & $C_4$\\
$18T49$ & $(C_3^2\rtimes C_3)\rtimes C_4$ & $S_3\simeq C_3\rtimes C_2$\\
$18T158$ & $(C_9^2\rtimes C_3)\rtimes C_2$ & $E_9(27)\simeq C_9\rtimes C_3$\\
$18T280$ & $C_3^4\rtimes C_{16}$ & $C_3^2\rtimes C_8$\\
$18T427$ & $C_2\times (\PSL_2(\bF_8)\rtimes C_3)$ & $C_2^3\rtimes F_{21}$\\
$18T937$ & $\PSL_2(\bF_8)^2\rtimes C_6$ & $C_2^3\rtimes (C_7\rtimes (\PSL_2(\bF_8)\rtimes C_3))$\\
\end{tabular}
}
\end{center}

\begin{center}
Table 4: $G=nTm\leq S_n$ $(2\leq n\leq 19)$: not metacyclic with 
$M(G)\neq 0$ 
and $H\leq G$ 
with $[G:H]=n$, 
$[G,G]\cap H=[H,H]$, $M(H)=0$ which satisfy 
$\Sha^2_\omega(G,J_{G/H})\leq H^2(G,J_{G/H})\simeq M(G)$ 
as in Proposition \ref{prop4.1} (2)
\vspace*{2mm}\\
\renewcommand{\arraystretch}{1.05}
{\small 
\begin{tabular}{llll} 
$nTm$ & $G$ & $H$ & $H^2(G,J_{G/H})\simeq M(G)$\\\hline
$4T2$ & $C_2^2$ & $\{1\}$ & $\bZ/2\bZ$\\
$4T3$ & $D_4\simeq C_4\rtimes C_2$ & $C_2$ & $\bZ/2\bZ$\\
$4T4$ & $A_4\simeq C_2^2\rtimes S_3$ & $C_3$ & $\bZ/2\bZ$\\
$4T5$ & $S_4$ & $S_3$ & $\bZ/2\bZ$\\
$6T9$ & $S_3^2$ & $S_3$ & $\bZ/2\bZ$\\
$6T10$ & $C_3^2\rtimes C_4$ & $S_3$ & $\bZ/3\bZ$\\
$8T3$ & $C_2^3$ & $\{1\}$ & $(\bZ/2\bZ)^{\oplus 3}$\\
$8T9$ & $C_2\times D_4$ & $C_2$ & $(\bZ/2\bZ)^{\oplus 3}$\\
$8T10$ & $(C_4\times C_2)\rtimes C_2$ & $C_2$ & $(\bZ/2\bZ)^{\oplus 2}$\\
$8T11$ & $(C_4\times C_2)\rtimes C_2$ & $C_2$ & $(\bZ/2\bZ)^{\oplus 2}$\\
$8T13$ & $C_2\times A_4$ & $C_3$ & $\bZ/2\bZ$\\
$8T17$ & $C_4^2\rtimes C_2$ & $C_4$ & $\bZ/2\bZ$\\
$8T19$ & $C_2^3\rtimes C_4$ & $C_4$ & $(\bZ/2\bZ)^{\oplus 2}$\\
$8T24$ & $C_2\times S_4$ & $S_3$ & $(\bZ/2\bZ)^{\oplus 2}$\\
$9T5$ & $C_3^2\rtimes C_2$ & $C_2$ & $\bZ/3\bZ$\\
$9T7$ & $C_3^2\rtimes C_3$ & $C_3$ & $(\bZ/3\bZ)^{\oplus 2}$\\
$9T9$ & $C_3^2\rtimes C_4$ & $C_4$ & $\bZ/3\bZ$\\
$9T11$ & $C_3^2\rtimes C_6$ & $C_6$ & $\bZ/3\bZ$\\
$9T13$ & $C_3^2\rtimes C_6$ & $S_3$ & $\bZ/3\bZ$\\
$9T14$ & $C_3^2\rtimes Q_8$ & $Q_8$ & $\bZ/3\bZ$\\
$9T23$ & $(C_3^2\rtimes Q_8)\rtimes C_3$ & $\SL_2(\bF_3)\simeq Q_8\rtimes C_3$ & $\bZ/3\bZ$\\
$10T9$ & $D_5^2$ & $D_5\simeq C_5\rtimes C_2$ & $\bZ/2\bZ$\\
$10T10$ & $C_5^2\rtimes C_4$ & $D_5\simeq C_5\rtimes C_2$ & $\bZ/5\bZ$\\
$10T17$ & $C_5^2\rtimes (C_4\times C_2)$ & $F_{20}\simeq C_5\rtimes C_4$ & $\bZ/2\bZ$\\
$10T35$ & $(A_6.C_2)\rtimes C_2$ & $C_3^2\rtimes QD_8$ & $\bZ/2\bZ$\\
$12T4$ & $A_4\simeq C_2^2\rtimes S_3$ & $\{1\}$ & $\bZ/2\bZ$\\
$12T6$ & $C_2\times A_4$ & $C_2$ & $\bZ/2\bZ$\\
$12T8$ & $S_4$ & $C_2$ & $\bZ/2\bZ$\\
$12T10$ & $C_2^2\times S_3$ & $C_2$ & $(\bZ/2\bZ)^{\oplus 3}$\\
$12T13$ & $(C_6\times C_2)\rtimes C_2$ & $C_2$ & $\bZ/2\bZ$\\
$12T15$ & $(C_6\times C_2)\rtimes C_2$ & $C_2$ & $\bZ/2\bZ$\\
$12T20$ & $C_3\times A_4$ & $C_3$ & $\bZ/6\bZ$\\
$12T27$ & $A_4\rtimes C_4$ & $C_4$ & $\bZ/2\bZ$\\
$12T35$ & $S_3^2\rtimes C_2$ & $S_3$ & $\bZ/2\bZ$\\
$12T37$ & $C_2\times S_3^2$ & $S_3$ & $(\bZ/2\bZ)^{\oplus 3}$\\
$12T38$ & $(C_6\times S_3)\rtimes C_2$ & $S_3$ & $\bZ/2\bZ$\\
$12T39$ & $C_3^2\rtimes (C_4\times C_2)$ & $S_3$ & $\bZ/2\bZ$\\
$12T40$ & $C_2\times (C_3^2\rtimes C_4)$ & $S_3$ & $\bZ/6\bZ$\\
$12T41$ & $C_2\times (C_3^2\rtimes C_4)$ & $S_3$ & $\bZ/6\bZ$\\
$12T42$ & $C_3\times ((C_6\times C_2)\rtimes C_2)$ & $C_6$ & $\bZ/2\bZ$\\
$12T43$ & $A_4\times S_3$ & $C_6$ & $\bZ/2\bZ$\\
$12T44$ & $(C_3\times A_4)\rtimes C_2$ & $S_3$ & $\bZ/6\bZ$\\
$12T45$ & $C_3\times S_4$ & $S_3$ & $\bZ/2\bZ$\\
$12T76$ & $C_2\times A_5$ & $D_5\simeq C_5\rtimes C_2$ & $\bZ/2\bZ$\\
$12T112$ & $((C_4^2\rtimes C_3)\rtimes C_2)\rtimes C_2$ & $QD_8\simeq C_8\rtimes C_2$ & $(\bZ/2\bZ)^{\oplus 2}$\\
$12T121$ & $C_3\times (S_3^2\rtimes C_2)$ & $C_3\times S_3$ & $\bZ/2\bZ$\\
$12T124$ & $A_5\rtimes C_4$ & $F_{20}\simeq C_5\rtimes C_4$ & $\bZ/2\bZ$\\
$12T175$ & $((C_3^3\rtimes C_2^2)\rtimes C_3)\rtimes C_2$ & $(C_3^2\rtimes C_3)\rtimes C_2$ & $\bZ/2\bZ$\\
$12T231$ & $C_3\times (((C_3^3\rtimes C_2^2)\rtimes C_3)\rtimes C_2)$ & $(C_3^3\rtimes C_3)\rtimes C_2$ & $\bZ/2\bZ$\\
$12T233$ & $(C_3\times ((C_3^3\rtimes C_2^2)\rtimes C_3))\rtimes C_2$ & $(C_3^3\rtimes C_3)\rtimes C_2$ & $\bZ/6\bZ$

\end{tabular}
}

\newpage
Table 4 (continued): $G=nTm\leq S_n$ $(2\leq n\leq 19)$: not metacyclic with 
$M(G)\neq 0$ 
and $H\leq G$ 
with $[G:H]=n$, 
$[G,G]\cap H=[H,H]$, $M(H)=0$ which satisfy 
$\Sha^2_\omega(G,J_{G/H})\leq H^2(G,J_{G/H})\simeq M(G)$ 
as in Proposition \ref{prop4.1} (2)
\vspace*{2mm}\\
\renewcommand{\arraystretch}{1.05}
{\small 
\begin{tabular}{llll}
$nTm$ & $G$ & $H$ & $H^2(G,J_{G/H})\simeq M(G)$\\\hline
$12T295$ & $M_{12}$ & $M_{11}$ & $\bZ/2\bZ$\\
$14T12$ & $C_7^2\rtimes C_4$ & $D_7\simeq C_7\rtimes C_2$ & $\bZ/7\bZ$\\
$14T13$ & $D_7^2$ & $D_7\simeq C_7\rtimes C_2$ & $\bZ/2\bZ$\\
$14T24$ & $C_7^2\rtimes (C_6\times C_2)$ & $F_{42}\simeq C_7\rtimes C_6$ & $\bZ/2\bZ$\\
$15T12$ & $C_5^2\rtimes C_6$ & $D_5\simeq C_5\rtimes C_2$ & $\bZ/5\bZ$\\
$15T17$ & $C_5^2\rtimes (C_3\rtimes C_4)$ & $F_{20}\simeq C_5\rtimes C_4$ & $\bZ/5\bZ$\\
$16T2$ & $C_4\times C_2^2$ & $\{1\}$ & $(\bZ/2\bZ)^{\oplus 3}$\\
$16T3$ & $C_2^4$ & $\{1\}$ & $(\bZ/2\bZ)^{\oplus 6}$\\
$16T7$ & $C_2\times Q_8$ & $\{1\}$ & $(\bZ/2\bZ)^{\oplus 2}$\\
$16T9$ & $C_2\times D_4$ & $\{1\}$ & $(\bZ/2\bZ)^{\oplus 3}$\\
$16T10$ & $(C_4\times C_2)\rtimes C_2$ & $\{1\}$ & $(\bZ/2\bZ)^{\oplus 2}$\\
$16T11$ & $(C_4\times C_2)\rtimes C_2$ & $\{1\}$ & $(\bZ/2\bZ)^{\oplus 2}$\\
$16T15$ & $C_2\times (C_8\rtimes C_2)$ & $C_2$ & $(\bZ/2\bZ)^{\oplus 2}$\\
$16T16$ & $(C_8\times C_2)\rtimes C_2$ & $C_2$ & $(\bZ/2\bZ)^{\oplus 2}$\\
$16T17$ & $C_4^2\rtimes C_2$ & $C_2$ & $\bZ/2\bZ\oplus \bZ/4\bZ$\\
$16T18$ & $C_2\times ((C_4\times C_2)\rtimes C_2)$ & $C_2$ & $(\bZ/2\bZ)^{\oplus 5}$\\
$16T19$ & $C_4\times D_4$ & $C_2$ & $(\bZ/2\bZ)^{\oplus 3}$\\
$16T20$ & $(C_2\times Q_8)\rtimes C_2$ & $C_2$ & $(\bZ/2\bZ)^{\oplus 5}$\\
$16T21$ & $C_2\times ((C_4\times C_2)\rtimes C_2)$ & $C_2$ & $(\bZ/2\bZ)^{\oplus 4}$\\
$16T23$ & $C_2^3\rtimes (C_2^2)$ & $C_2$ & $(\bZ/2\bZ)^{\oplus 5}$\\
$16T24$ & $(C_8\times C_2)\rtimes C_2$ & $C_2$ & $(\bZ/2\bZ)^{\oplus 2}$\\
$16T25$ & $C_2^2\times D_4$ & $C_2$ & $(\bZ2/\bZ)^{\oplus 6}$\\
$16T26$ & $(C_8\times C_2)\rtimes C_2$ & $C_2$ & $(\bZ/2\bZ)^{\oplus 2}$\\
$16T27$ & $C_4^2\rtimes C_2$ & $C_2$ & $\bZ/2\bZ$\\
$16T28$ & $C_4^2\rtimes C_2$ & $C_2$ & $\bZ/2\bZ$\\
$16T29$ & $C_2\times D_8$ & $C_2$ & $(\bZ/2\bZ)^{\oplus 3}$\\
$16T30$ & $C_4^2\rtimes C_2$ & $C_2$ & $\bZ/2\bZ\oplus \bZ/4\bZ$\\
$16T31$ & $(C_2\times Q_8)\rtimes C_2$ & $C_2$ & $(\bZ/2\bZ)^{\oplus 2}$\\
$16T32$ & $(C_2\times Q_8)\rtimes C_2$ & $C_2$ & $(\bZ/2\bZ)^{\oplus 2}$\\
$16T33$ & $C_2^3\rtimes C_4$ & $C_2$ & $(\bZ/2\bZ)^{\oplus 2}$\\
$16T34$ & $(C_4\times C_2^2)\rtimes C_2$ & $C_2$ & $(\bZ/2\bZ)^{\oplus 3}$\\
$16T35$ & $C_8\rtimes (C_2^2)$ & $C_2$ & $(\bZ/2\bZ)^{\oplus 2}$\\
$16T37$ & $(C_4\times C_2^2)\rtimes C_2$ & $C_2$ & $(\bZ/2\bZ)^{\oplus 2}$\\
$16T38$ & $C_8\rtimes (C_2^2)$ & $C_2$ & $(\bZ/2\bZ)^{\oplus 2}$\\
$16T39$ & $C_2^4\rtimes C_2$ & $C_2$ & $(\bZ/2\bZ)^{\oplus 4}$\\
$16T41$ & $(C_8\rtimes C_2)\rtimes C_2$ & $C_2$ & $\bZ/2\bZ$\\
$16T42$ & $C_4^2\rtimes C_2$ & $C_2$ & $\bZ/2\bZ$\\
$16T43$ & $(C_4\times C_2^2)\rtimes C_2$ & $C_2$ & $(\bZ/2\bZ)^{\oplus 3}$\\
$16T44$ & $(C_8\times C_2)\rtimes C_2$ & $C_2$ & $(\bZ/2\bZ)^{\oplus 2}$\\
$16T45$ & $C_8\rtimes (C_2^2)$ & $C_2$ & $(\bZ/2\bZ)^{\oplus 2}$\\
$16T46$ & $C_2^4\rtimes C_2$ & $C_2$ & $(\bZ/2\bZ)^{\oplus 4}$\\
$16T47$ & $(C_8\times C_2)\rtimes C_2$ & $C_2$ & $(\bZ/2\bZ)^{\oplus 2}$\\
$16T48$ & $C_2\times QD_8$ & $C_2$ & $(\bZ/2\bZ)^{\oplus 2}$\\
$16T50$ & $(C_2\times Q_8)\rtimes C_2$ & $C_2$ & $(\bZ/2\bZ)^{\oplus 2}$\\
$16T51$ & $C_4^2\rtimes C_2$ & $C_2$ & $(\bZ/2\bZ)^{\oplus 2}\oplus \bZ/4\bZ$\\
$16T52$ & $C_2^3\rtimes C_4$ & $C_2$ & $(\bZ/2\bZ)^{\oplus 2}$\\
$16T54$ & $(C_4\times C_2^2)\rtimes C_2$ & $C_2$ & $(\bZ/2\bZ)^{\oplus 2}$\\
$16T57$ & $C_4\times A_4$ & $C_3$ & $\bZ/2\bZ$\\
$16T58$ & $C_2^2\times A_4$ & $C_3$ & $(\bZ/2\bZ)^{\oplus 2}$\\
$16T63$ & $C_4^2\rtimes C_3$ & $C_3$ & $\bZ/4\bZ$
\end{tabular}
}

\newpage
Table 4 (continued): $G=nTm\leq S_n$ $(2\leq n\leq 19)$: not metacyclic with 
$M(G)\neq 0$ 
and $H\leq G$ 
with $[G:H]=n$, 
$[G,G]\cap H=[H,H]$, $M(H)=0$ which satisfy 
$\Sha^2_\omega(G,J_{G/H})\leq H^2(G,J_{G/H})\simeq M(G)$ 
as in Proposition \ref{prop4.1} (2)
\vspace*{2mm}\\
\renewcommand{\arraystretch}{1.05}
{\small 
\begin{tabular}{llll}
$nTm$ & $G$ & $H$ & $H^2(G,J_{G/H})\simeq M(G)$\\\hline
$16T64$ & $C_2^4\rtimes C_3$ & $C_3$ & $(\bZ/2\bZ)^{\oplus 4}$\\
$16T74$ & $C_4^2\rtimes C_4$ & $C_4$ & $\bZ/2\bZ\oplus \bZ/4\bZ$\\
$16T76$ & $C_2\times (C_2^3\rtimes C_4)$ & $C_4$ & $(\bZ/2\bZ)^{\oplus 4}$\\
$16T96$ & $(C_4\times C_2^2)\rtimes C_4$ & $C_4$ & $(\bZ/2\bZ)^{\oplus 3}$\\
$16T107$ & $(C_4^2\rtimes C_2)\rtimes C_2$ & $C_4$ & $(\bZ/2\bZ)^{\oplus 3}$\\
$16T110$ & $(C_8\rtimes C_2)\rtimes C_4$ & $C_4$ & $(\bZ/2\bZ)^{\oplus 2}$\\
$16T111$ & $C_2\times (C_4^2\rtimes C_2)$ & $C_4$ & $(\bZ/2\bZ)^{\oplus 3}$\\
$16T113$ & $(C_4^2\rtimes C_2)\rtimes C_2$ & $C_4$ & $(\bZ/2\bZ)^{\oplus 2}$\\
$16T114$ & $(C_4^2\rtimes C_2)\rtimes C_2$ & $C_4$ & $(\bZ/2\bZ)^{\oplus 2}$\\
$16T120$ & $(C_2^3\rtimes C_4)\rtimes C_2$ & $C_4$ & $(\bZ/2\bZ)^{\oplus 2}\oplus \bZ/4\bZ$\\
$16T121$ & $C_4^2\rtimes C_4$ & $C_4$ & $(\bZ/2\bZ)^{\oplus 2}$\\
$16T143$ & $(C_2^3\rtimes C_4)\rtimes C_2$ & $C_4$ & $\bZ/2\bZ\oplus \bZ/4\bZ$\\
$16T148$ & $((C_4\times C_2)\rtimes C_2)\rtimes C_4$ & $C_4$ & $\bZ/2\bZ$\\
$16T156$ & $(C_8\rtimes C_4)\rtimes C_2$ & $C_4$ & $\bZ/2\bZ$\\
$16T161$ & $(C_2\times Q_8)\rtimes C_4$ & $C_4$ & $\bZ/2\bZ$\\
$16T163$ & $((C_4\times C_2)\rtimes C_4)\rtimes C_2$ & $C_4$ & $(\bZ/2\bZ)^{\oplus 2}$\\
$16T166$ & $(C_2^3\rtimes C_4)\rtimes C_2$ & $C_4$ & $(\bZ/2\bZ)^{\oplus 2}$\\
$16T176$ & $C_4^2\rtimes C_4$ & $C_4$ & $(\bZ/2\bZ)^{\oplus 2}$\\
$16T178$ & $C_2^4\rtimes C_5$ & $C_5$ & $(\bZ/2\bZ)^{\oplus 2}$\\
$16T179$ & $D_4\times A_4$ & $C_6$ & $(\bZ/2\bZ)^{\oplus 2}$\\
$16T180$ & $((C_2\times Q_8)\rtimes C_2)\rtimes C_3$ & $C_6$ & $\bZ/2\bZ$\\
$16T181$ & $C_4\times S_4$ & $S_3$ & $(\bZ/2\bZ)^{\oplus 2}$\\
$16T182$ & $C_2^2\times S_4$ & $S_3$ & $(\bZ/2\bZ)^{\oplus 4}$\\
$16T183$ & $(C_2^4\rtimes C_2)\rtimes C_3$ & $C_6$ & $(\bZ/2\bZ)^{\oplus 2}$\\
$16T184$ & $(C_4^2\rtimes C_2)\rtimes C_3$ & $C_6$ & $\bZ/2\bZ$\\
$16T185$ & $(C_4^2\rtimes C_2)\rtimes C_3$ & $C_6$ & $\bZ/4\bZ$\\
$16T186$ & $(A_4\rtimes C_4)\rtimes C_2$ & $S_3$ & $(\bZ/2\bZ)^{\oplus 2}$\\
$16T187$ & $\GL_2(\bF_3)\rtimes C_2$ & $S_3$ & $\bZ/2\bZ$\\
$16T188$ & $C_2\times \GL_2(\bF_3)$ & $S_3$ & $\bZ/2\bZ$\\
$16T189$ & $(\SL_2(\bF_3).C2)\rtimes C_2$ & $S_3$ & $\bZ/2\bZ$\\
$16T190$ & $\GL_2(\bF_3)\rtimes C_2$ & $S_3$ & $\bZ/2\bZ$\\
$16T191$ & $(C_2\times S_4)\rtimes C_2$ & $S_3$ & $(\bZ/2\bZ)^{\oplus 2}$\\
$16T194$ & $(C_2^4\rtimes C_3)\rtimes C_2$ & $S_3$ & $(\bZ/2\bZ)^{\oplus 3}$\\
$16T195$ & $(C_4^2\rtimes C_3)\rtimes C_2$ & $S_3$ & $\bZ/2\bZ$\\
$16T260$ & $(C_4.(C_8\times C_2))\rtimes C_2$ & $C_8$ & $\bZ/2\bZ$\\
$16T289$ & $C_8^2\rtimes C_2$ & $C_8$ & $\bZ/2\bZ$\\
$16T332$ & $(((C_2\times Q_8)\rtimes C_2)\rtimes C_2)\rtimes C_2$ & $Q_8$ & $(\bZ/2\bZ)^{\oplus 3}$\\
$16T338$ & $((C_4\times Q_8)\rtimes C_2)\rtimes C_2$ & $Q_8$ & $(\bZ/2\bZ)^{\oplus 3}$\\
$16T351$ & $(C_4.(C_8\times C_2))\rtimes C_2$ & $Q_8$ & $\bZ/2\bZ$\\
$16T357$ & $((C_4^2\rtimes C_2)\rtimes C_2)\rtimes C_2$ & $Q_8$ & $(\bZ/2\bZ)^{\oplus 3}$\\
$16T370$ & $((C_4\times C_2).(C_4\times C_2))\rtimes C_2$ & $Q_8$ & $(\bZ/2\bZ)^{\oplus 3}$\\
$16T380$ & $((C_2^3\rtimes C_4)\rtimes C_2)\rtimes C_2$ & $Q_8$ & $(\bZ/2\bZ)^{\oplus 3}$\\
$16T381$ & $((C_2^3\rtimes C_4)\rtimes C_2)\rtimes C_2$ & $Q_8$ & $(\bZ/2\bZ)^{\oplus 2}\oplus \bZ/4\bZ$\\
$16T415$ & $(C_2^4\rtimes C_5)\rtimes C_2$ & $D_5\simeq C_5\rtimes C_2$ & $(\bZ/2\bZ)^{\oplus 2}$\\
$16T430$ & $(C_4^2\rtimes C_3)\rtimes C_4$ & $Q_{12}\simeq C_3\rtimes C_4$ & $\bZ/4\bZ$\\
$16T433$ & $(C_2^4\rtimes C_3)\rtimes C_4$ & $Q_{12}\simeq C_3\rtimes C_4$ & $\bZ/2\bZ$\\
$16T504$ & $((C_4\times (C_8\rtimes C_2))\rtimes C_2)\rtimes C_2$ & $M_{16}\simeq C_8\rtimes C_2$ & $(\bZ/2\bZ)^{\oplus 2}\oplus \bZ/4\bZ$\\
$16T548$ & $(C_4^2\rtimes C_4)\rtimes C_4$ & $M_{16}\simeq C_8\rtimes C_2$ & $\bZ/2\bZ\oplus \bZ/4\bZ$\\
$16T568$ & $(C_8^2\rtimes C_2)\rtimes C_2$ & $M_{16}\simeq C_8\rtimes C_2$ & $(\bZ/2\bZ)^{\oplus 2}$\\
$16T576$ & $((C_8\rtimes C_4)\rtimes C_2)\rtimes C_4$ & $M_{16}\simeq C_8\rtimes C_2$ & $(\bZ/2\bZ)^{\oplus 2}$
\end{tabular}
}

\newpage
Table 4 (continued): $G=nTm\leq S_n$ $(2\leq n\leq 19)$: not metacyclic with 
$M(G)\neq 0$ 
and $H\leq G$ 
with $[G:H]=n$, 
$[G,G]\cap H=[H,H]$, $M(H)=0$ which satisfy 
$\Sha^2_\omega(G,J_{G/H})\leq H^2(G,J_{G/H})\simeq M(G)$ 
as in Proposition \ref{prop4.1} (2)
\vspace*{2mm}\\
\renewcommand{\arraystretch}{1.05}
{\small 
\begin{tabular}{llll}
$nTm$ & $G$ & $H$ & $H^2(G,J_{G/H})\simeq M(G)$\\\hline
$16T580$ & $(C_{16}\rtimes C_4)\rtimes C_4$ & $M_{16}\simeq C_8\rtimes C_2$ & $\bZ/2\bZ$\\
$16T601$ & $(((C_8\times C_2)\rtimes C_2)\rtimes C_2)\rtimes C_4$ & $M_{16}\simeq C_8\rtimes C_2$ & $\bZ/2\bZ\oplus \bZ/4\bZ$\\
$16T605$ & $(((C_4\rtimes C_8)\rtimes C_2)\rtimes C_2)\rtimes C_2$ & $M_{16}\simeq C_8\rtimes C_2$ & $(\bZ/2\bZ)^{\oplus 2}\oplus \bZ/4\bZ$\\
$16T619$ & $((C_2^2.(C_4\times C_2))\rtimes C_2)\rtimes C_4$ & $M_{16}\simeq C_8\rtimes C_2$ & $(\bZ/2\bZ)^{\oplus 2}$\\
$16T627$ & $(((C_8\rtimes C_4)\rtimes C_2)\rtimes C_2)\rtimes C_2$ & $M_{16}\simeq C_8\rtimes C_2$ & $(\bZ/2\bZ)^{\oplus 3}$\\
$16T655$ & $(((C_4\rtimes C_8)\rtimes C_2)\rtimes C_2)\rtimes C_2$ & $M_{16}\simeq C_8\rtimes C_2$ & $(\bZ/2\bZ)^{\oplus 3}$\\
$16T668$ & $(C_8\rtimes C_8)\rtimes C_4$ & $QD_8\simeq C_8\rtimes C_2$ & $\bZ/8\bZ$\\
$16T669$ & $(((C_2^3\rtimes C_4)\rtimes C_2)\rtimes C_2)\rtimes C_2$ & $QD_8\simeq C_8\rtimes C_2$ & $(\bZ/2\bZ)^{\oplus 3}$\\
$16T672$ & $(((C_2^2.(C_4\times C_2))\rtimes C_2)\rtimes C_2)\rtimes C_2$ & $QD_8\simeq C_8\rtimes C_2$ & $(\bZ/2\bZ)^{\oplus 3}$\\
$16T674$ & $(((C_4\times C_2).(C_4\times C_2))\rtimes C_2)\rtimes C_2$ & $QD_8\simeq C_8\rtimes C_2$ & $(\bZ/2\bZ)^{\oplus 3}$\\
$16T681$ & $(((C_2^2).(C_2^3))\rtimes C_4)\rtimes C_2$ & $QD_8\simeq C_8\rtimes C_2$ & $\bZ/2\bZ$\\
$16T683$ & $((C_4\rtimes C_8)\rtimes C_2)\rtimes C_4$ & $QD_8\simeq C_8\rtimes C_2$ & $\bZ/2\bZ$\\
$16T687$ & $((C_8\rtimes Q_8)\rtimes C_2)\rtimes C_2$ & $QD_8\simeq C_8\rtimes C_2$ & $(\bZ/2\bZ)^{\oplus 2}$\\
$16T694$ & $C_8^2\rtimes C_4$ & $QD_8\simeq C_8\rtimes C_2$ & $\bZ/4\bZ$\\
$16T704$ & $((C_4\times C_2).(C_4\times C_2))\rtimes C_4$ & $QD_8\simeq C_8\rtimes C_2$ & $\bZ/2\bZ$\\
$16T706$ & $(((C_2\times QD_8)\rtimes C_2)\rtimes C_2)\rtimes C_2$ & $QD_8\simeq C_8\rtimes C_2$ & $(\bZ/2\bZ)^{\oplus 2}$\\
$16T708$ & $((C_2^4\rtimes C_3)\rtimes C_2)\rtimes C_3$ & $C_3\times S_3$ & $\bZ/2\bZ$\\
$16T709$ & $S_4\times A_4$ & $C_3\times S_3$ & $(\bZ/2\bZ)^{\oplus 2}$\\
$16T711$ & $(C_2^4\rtimes C_5)\rtimes C_4$ & $F_{20}\simeq C_5\rtimes C_4$ & $\bZ/2\bZ$\\
$16T730$ & $(Q_8^2\rtimes C_2)\rtimes C_3$ & $\SL_2(\bF_3)\simeq Q_8\rtimes C_3$ & $\bZ/2\bZ$\\
$16T734$ & $(((C_4^2\rtimes C_2)\rtimes C_2)\rtimes C_2)\rtimes C_3$ & $\SL_2(\bF_3)\simeq Q_8\rtimes C_3$ & $\bZ/2\bZ$\\
$16T735$ & $((((C_2\times Q_8)\rtimes C_2)\rtimes C_2)\rtimes C_2)\rtimes C_3$ & $\SL_2(\bF_3)\simeq Q_8\rtimes C_3$ & $\bZ/2\bZ$\\
$16T1062$ & $((((C_4^2\rtimes C_2)\rtimes C_2)\rtimes C_3)\rtimes C_2)\rtimes C_2$ & $\GL_2(\bF_3)\simeq Q_8\rtimes S_3$ & $(\bZ/2\bZ)^{\oplus 2}$\\
$16T1067$ & $((((C_4\times C_2).(C_4\times C_2))\rtimes C_2)\rtimes C_3)\rtimes C_2$ & $\GL_2(\bF_3)\simeq Q_8\rtimes S_3$ & $(\bZ/2\bZ)^{\oplus 2}$\\
$16T1076$ & $(C_2^4.C_2^3)\rtimes C_7$ & $C_2^3\rtimes C_7$ & $\bZ/2\bZ$\\
$16T1077$ & $((C_2\times (C_2^3\rtimes C_2^2))\rtimes C_2)\rtimes C_7$ & $C_2^3\rtimes C_7$ & $(\bZ/2\bZ)^{\oplus 2}$\\
$16T1502$ & $((C_2^6\rtimes C_7)\rtimes C_2)\rtimes C_3$ & $C_2^3\rtimes F_{21}$ & $\bZ/2\bZ$\\
$18T4$ & $C_3^2\rtimes C_2$ & $\{1\}$ & $\bZ/3\bZ$\\
$18T9$ & $S_3^2$ & $C_2$ & $\bZ/2\bZ$\\
$18T10$ & $C_3^2\rtimes C_4$ & $C_2$ & $\bZ/3\bZ$\\
$18T11$ & $S_3^2$ & $C_2$ & $\bZ/2\bZ$\\
$18T12$ & $C_2\times (C_3^2\rtimes C_2)$ & $C_2$ & $\bZ/6\bZ$\\
$18T15$ & $C_2\times (C_3^2\rtimes C_3)$ & $C_3$ & $(\bZ/3\bZ)^{\oplus 2}$\\
$18T17$ & $C_3^2\times S_3$ & $C_3$ & $\bZ/3\bZ$\\
$18T21$ & $C_3^2\rtimes C_6$ & $C_3$ & $\bZ/3\bZ$\\
$18T22$ & $C_3^2\rtimes C_6$ & $C_3$ & $\bZ/3\bZ$\\
$18T23$ & $C_3\times (C_3^2\rtimes C_2)$ & $C_3$ & $\bZ/3\bZ$\\
$18T27$ & $C_2\times (C_3^2\rtimes C_4)$ & $C_4$ & $\bZ/6\bZ$\\
$18T41$ & $C_2\times (C_3^2\rtimes C_6)$ & $C_6$ & $\bZ/6\bZ$\\
$18T42$ & $C_2\times (C_3^2\rtimes C_6)$ & $S_3$ & $\bZ/6\bZ$\\
$18T43$ & $C_3\times S_3^2$ & $S_3$ & $\bZ/2\bZ$\\
$18T44$ & $C_3\times (C_3^2\rtimes C_4)$ & $S_3$ & $\bZ/3\bZ$\\
$18T46$ & $C_3\times S_3^2$ & $C_6$ & $\bZ/2\bZ$\\
$18T50$ & $D_9\times S_3$ & $S_3$ & $\bZ/2\bZ$\\
$18T51$ & $(C_3^2\rtimes C_3)\rtimes C_2^2$ & $S_3$ & $\bZ/2\bZ$\\
$18T52$ & $C_2\times ((C_3^2\rtimes C_3)\rtimes C_2)$ & $S_3$ & $\bZ/2\bZ$\\
$18T53$ & $C_3^3\rtimes C_2^2$ & $S_3$ & $\bZ/2\bZ$\\
$18T54$ & $C_3^3\rtimes C_4$ & $S_3$ & $\bZ/3\bZ$\\
$18T56$ & $(C_3^2\rtimes C_3)\rtimes C_2^2$ & $S_3$ & $\bZ/2\bZ$\\
$18T57$ & $(C_3^2\rtimes C_3)\rtimes C_2^2$ & $S_3$ & $\bZ/2\bZ$
\end{tabular}
}

\newpage
Table 4 (continued): $G=nTm\leq S_n$ $(2\leq n\leq 19)$: not metacyclic with 
$M(G)\neq 0$ 
and $H\leq G$ 
with $[G:H]=n$, 
$[G,G]\cap H=[H,H]$, $M(H)=0$ which satisfy 
$\Sha^2_\omega(G,J_{G/H})\leq H^2(G,J_{G/H})\simeq M(G)$ 
as in Proposition \ref{prop4.1} (2)
\vspace*{2mm}\\
\renewcommand{\arraystretch}{1.05}
{\small 
\begin{tabular}{llll}
$nTm$ & $G$ & $H$ & $H^2(G,J_{G/H})\simeq M(G)$\\\hline

$18T58$ & $(C_3^2\rtimes C_2)\times S_3$ & $S_3$ & $\bZ/6\bZ$\\
$18T59$ & $C_2\times (C_3^2\rtimes C_8)$ & $C_8$ & $\bZ/2\bZ$\\
$18T64$ & $C_2\times (C_3^2\rtimes Q_8)$ & $Q_8$ & $\bZ/2\bZ\oplus \bZ/6\bZ$\\
$18T110$ & $C_2\times (C_3^2\rtimes QD_8)$ & $QD_8\simeq C_8\rtimes C_2$ & $(\bZ/2\bZ)^{\oplus 2}$\\
$18T118$ & $C_3\times ((C_3^2\rtimes C_3)\rtimes C_2^2)$ & $C_3\times S_3$ & $\bZ/2\bZ$\\
$18T119$ & $C_2\times ((C_3^3\rtimes C_3)\rtimes C_2)$ & $C_3\times S_3$ & $\bZ/2\bZ$\\
$18T120$ & $C_3\times (C_3^3\rtimes C_2^2)$ & $C_3\times S_3$ & $\bZ/2\bZ$\\
$18T121$ & $(C_3^2\rtimes C_6)\times S_3$ & $C_3\times S_3$ & $\bZ/6\bZ$\\
$18T122$ & $(C_9\rtimes C_6)\times S_3$ & $C_3\times S_3$ & $\bZ/2\bZ$\\
$18T123$ & $C_3\times (C_3^3\rtimes C_4)$ & $C_3\times S_3$ & $\bZ/3\bZ$\\
$18T126$ & $C_3\times ((C_3^2\rtimes C_3)\rtimes C_2^2)$ & $C_3\times S_3$ & $\bZ/2\bZ$\\
$18T130$ & $C_9^2\rtimes C_4$ & $D_9\simeq C_9\rtimes C_2$ & $\bZ/9\bZ$\\
$18T140$ & $D_{18}^2$ & $D_{18}\simeq C_{18}\rtimes C_2$ & $\bZ/2\bZ$\\
$18T151$ & $C_2\times ((C_3^2\rtimes Q_8)\rtimes C_3)$ & $\SL_2(\bF_3)\simeq Q_8\rtimes C_3$ & $\bZ/3\bZ$\\
$18T229$ & $C_2\times (((C_3^2\rtimes Q_8)\rtimes C_3)\rtimes C_2)$ & $\GL_2(\bF_3)\simeq Q_8\rtimes S_3$ & $\bZ/2\bZ$\\
$18T233$ & $((C_9\rtimes C_9)\rtimes C_3)\rtimes C_2^2$ & $C_9\rtimes C_6$ & $\bZ/2\bZ$\\
$18T235$ & $((C_9\rtimes C_9)\rtimes C_3)\rtimes C_4$ & $C_9\rtimes C_6$ & $\bZ/3\bZ$\\
$18T244$ & $((C_3\times (C_3^2\rtimes C_3))\rtimes C_3)\rtimes C_2^2$ & $(C_3^2\rtimes C_3)\rtimes C_2$ & $\bZ/2\bZ$\\
$18T281$ & $C_3^4\rtimes (C_8\times C_2)$ & $C_3^2\rtimes C_8$ & $\bZ/2\bZ$\\
$18T383$ & $C_3^4\rtimes ((C_8\times C_2)\rtimes C_2)$ & $C_3^2\rtimes QD_8$ & $(\bZ/2\bZ)^{\oplus 2}$\\
$18T385$ & $C_3^4\rtimes (C_8\rtimes (C_2^2))$ & $C_3^2\rtimes QD_8$ & $(\bZ/2\bZ)^{\oplus 2}$\\
$18T386$ & $C_3^4\rtimes ((C_2\times Q_8)\rtimes C_2)$ & $C_3^2\rtimes QD_8$ & $\bZ/2\bZ\oplus \bZ/6\bZ$\\
$18T393$ & $C_3^4\rtimes (C_2\times QD_8)$ & $C_3^2\rtimes QD_8$ & $(\bZ/2\bZ)^{\oplus 2}$\\
$18T524$ & $(((C_3^4\rtimes Q_8)\rtimes C_3)\rtimes C_2)\rtimes C_2$ & $((C_3^2\rtimes Q_8)\rtimes C_3)\rtimes C_2$ & $\bZ/2\bZ$\\
$18T526$ & $(((C_3^4\rtimes Q_8)\rtimes C_3)\rtimes C_2)\rtimes C_2$ & $((C_3^2\rtimes Q_8)\rtimes C_3)\rtimes C_2$ & $\bZ/2\bZ$\\
$18T527$ & $((C_3^4\rtimes (C_2\times Q_8))\rtimes C_3)\rtimes C_2$ & $((C_3^2\rtimes Q_8)\rtimes C_3)\rtimes C_2$ & $\bZ/6\bZ$\\
$18T528$ & $((C_3^4\rtimes ((C_4\times C_2)\rtimes C_2))\rtimes C_3)\rtimes C_2$ & $((C_3^2\rtimes Q_8)\rtimes C_3)\rtimes C_2$	& $\bZ/2\bZ$
\end{tabular}
}
\end{center}


\section{Appendix: Examples of groups $G$ with $M(G)=0$: Galois cases with $H=\{1\}$}\label{S8}

Let $k$ be a field and $K/k$ be 
a finite Galois extension with Galois group $G={\rm Gal}(K/k)$. 
When $k$ be a global field, 
it follows from Tate's theorem (Theorem \ref{thTate}) that 
if $M(G)=0$, then the Hasse norm principle holds for $K/k$. 
It also follows from Voskresenskii's theorem (Theorem \ref{thVos70}) 
that if $M(G)=0$, then 
$\Sha^2_\omega(G,J_G)=H^2(G,J_G)\simeq M(G)=0$. 
We note that $\Sha^2_\omega(G,J_G)=0$ implies that 
$A(T)=0$, i.e. 
$T$ has the weak approximation property, 
and $\Sha(T)=0$, i.e. the Hasse norm principle holds for $K/k$ 
$($that is, Hasse principle holds for all torsors $E$ under $T$$)$ 
where $T=R^{(1)}_{K/k}(\bG_{m,K})$ is the norm one torus of $K/k$ 
$($see Section \ref{S3} and Ono's theorem $($Theorem \ref{thOno}$))$. 

\begin{example}[$G=\SL_n(\bF_q)$: the special linear group of degree $n$ over $\bF_q$ with $M(G)=0$; $G=\PSL_n(\bF_q)$: the projective special linear group of degree $n$ over $\bF_q$ with $M(G)=0$; $G=\GL_n(\bF_q)$: the general linear group of degree $n$ over $\bF_q$ with $M(G)=0$; $G=\PGL_n(\bF_q)$: the projective general linear group of degree $n$ over $\bF_q$ with $M(G)=0$]\label{exapp1}
Let $n\geq 2$ be an integer and $q=p^r\geq 2$ be prime power. 
Assume that $(n,q)\neq (2,4),(2,9),(3,2),(3,4),(4,2)$. 
Then\\
{\rm (1)}
If $G=\SL_n(\bF_q)$, 
then $\Sha^2_\omega(G,J_G)=H^2(G,J_G)\simeq M(G)=0$;\\
{\rm (2)} 
If $G=\PSL_n(\bF_q)$, 
then $\Sha^2_\omega(G,J_G)=H^2(G,J_G)\simeq M(G)\simeq \bZ/u\bZ$ 
where $u=\gcd(n,q-1)$. 
In particular, 
if $\gcd(n,q-1)=1$, then $\PSL_n(\bF_q)=\SL_n(\bF_q)$ and 
$\Sha^2_\omega(G,J_G)=H^2(G,J_G)\simeq M(G)=0$;\\
{\rm (3)}
If $G=\GL_n(\bF_q)$, 
then $\Sha^2_\omega(G,J_G)=H^2(G,J_G)\simeq M(G)=0$;\\
{\rm (4)}
If $G=\PGL_n(\bF_q)$, 
then $\Sha^2_\omega(G,J_G)=H^2(G,J_G)\simeq M(G)\simeq \bZ/u\bZ$ 
where $u=\gcd(n,q-1)$. In particular, 
if $\gcd(n,q-1)=1$, then $\PGL_n(\bF_q)=\PSL_n(\bF_q)=\SL_n(\bF_q)$ and 
$\Sha^2_\omega(G,J_G)=H^2(G,J_G)\simeq M(G)=0$. 
Note that if $q=2$, then 
$\SL_n(\bF_2)=\PSL_n(\bF_2)=\GL_n(\bF_2)=\PGL_n(\bF_2)$. 

When $k$ is a global field, $\Sha^2_\omega(G,J_G)=0$ implies that 
$A(T)=0$, i.e. 
$T$ has the weak approximation property, 
and $\Sha(T)=0$, i.e. the Hasse norm principle holds for $K/k$ 
$($that is, Hasse principle holds for all torsors $E$ under $T$$)$ 
where $T=R^{(1)}_{K/k}(\bG_{m,K})$ is the norm one torus of $K/k$  
$($see Section \ref{S3} and Ono's theorem $($Theorem \ref{thOno}$))$. 

For (1), (2), we have $M(\PSL_n(\bF_q))\simeq \bZ/u\bZ$ 
(see Gorenstein \cite[Table 4.1, page 302]{Gor82}, 
Karpilovsky \cite[Table 8.5, page 283]{Kar87} 
with Lie type $A_n(q)=\PSL_{n+1}(\bF_q)$, 
see also \cite[Section 2.1, page 70]{Gor82}) 
and a Schur cover of $\PSL_n(\bF_q)$ 
(see Karpilovsky \cite[Theorem 7.1.1, page 246]{Kar87}, 
\cite[Theorem 3.2, page 735]{Kar93}): 
\begin{align*}
1\to Z(\SL_n(\bF_q))\simeq M(\PSL_n(\bF_q))\simeq 
\bZ/u\bZ\to \SL_n(\bF_q)\to \PSL_n(\bF_q)\to 1.
\end{align*}

For (3), by $\SL_n(\bF_q)$ $\lhd$ $\GL_n(\bF_q)$, 
$\SL_n(\bF_q)^{ab}=1$ and 
$\GL_n(\bF_q)/\SL_n(\bF_q)$ $\simeq$ $\bF_q^\times\simeq C_{q-1}$; cyclic, 
we have 
$M(\GL_n(\bF_q))$ $\leq$ $M(\SL_n(\bF_q))=0$ 
(see Sonn \cite[Lemma 6, Proposition 8]{Son94}). 

For (4), by $\PSL_n(\bF_q)$ $\lhd$ $\PGL_n(\bF_q)$, 
$\PSL_n(\bF_q)^{ab}=1$ and 
$\PGL_n(\bF_q)$ $/$ $\PSL_n(\bF_q)$ $\simeq$ $\bF_q^\times/(\bF_q^\times)^n$; cyclic, 
we have 
$[\GL_n(\bF_q),\GL_n(\bF_q)]$ $\cap$ $Z(\GL_n(\bF_q))$ $=$ $Z(\SL_n(\bF_q))\simeq C_u\leq M(\PGL_n(\bF_q))\leq M(\PSL_n(\bF_q)\simeq \bZ/u\bZ$  
(see Sonn \cite[Lemma 6, Lemma 7, Proposition 8]{Son94}). 

Indeed, we get a commutative diagram with exact rows and columns: 
\begin{align*}
\xymatrix@=20pt{
 & 1 \ar[d] & 1 \ar[d] & 1 \ar[d] & \\
1 \ar[r] & Z(\SL_n(\bF_q))\simeq C_u \ar[r] \ar[d] & \SL_n(\bF_q) \ar[r] \ar[d] & \PSL_n(\bF_q) \ar[r] \ar[d] & 1\\
1 \ar[r] & Z(\GL_n(\bF_q))\simeq \bF_q^\times \ar[r] \ar[d]_{x\mapsto x^n} & \GL_n(\bF_q) \ar[r] \ar[d]_{\rm det} & \PGL_n(\bF_q) \ar[r] \ar[d]_{\overline{\rm det}} & 1\\
1 \ar[r] & (\bF_q^\times)^n=(\bF_q^\times)^u \ar[r] \ar[d] & \bF_q^\times \ar[r] \ar[d] & \bF_q^\times/(\bF_q^\times)^n \ar[r] \ar[d] & 1\\
 & 1 & 1 & 1 & 
}
\end{align*}
where $u=\gcd(n,q-1)$ 
with $Z(\SL_n(\bF_q))\simeq M(\PSL_n(\bF_q))\simeq \bZ/u\bZ$ and 
$\bF_q^\times/(\bF_q^\times)^n\simeq C_u$. 

For the $5$ exceptional cases, 
see Gorenstein \cite[Table 4.1, page 302]{Gor82} 
with Lie type $A_n(q)=\PSL_{n+1}(\bF_q)$, 
Karpilovsky \cite[Theorem 7.1.1, page 246]{Kar87}, 
see also Remark \ref{rem8.2}. 
\begin{remark}\label{rem8.2}
(1) We see that $G=\PSL_n(\bF_q)$ $(n\geq 2)$ 
is simple except for the $2$ cases $(n,q)=(2,2),(2,3)$:\\ 
(i) $G=\PSL_2(\bF_2)= \SL_2(\bF_2)\simeq S_3$ with $M(G)=0$;\\
(ii) $G=\PSL_2(\bF_3)\simeq A_4$ with $M(G)\simeq \bZ/2\bZ$.\\
(2) The $5$ exceptional cases with $(n,q)=(2,4),(2,9),(3,2),(3,4),(4,2)$ 
satisfy that\\
(i) $G=\SL_2(\bF_4)= \PSL_2(\bF_4)= \PGL_2(\bF_4)
\simeq A_5$ with $M(G)\simeq \bZ/2\bZ$, 
$G=\GL_2(\bF_4)$ with $M(G)\simeq \bZ/2\bZ$;\\
(ii) $G=\SL_2(\bF_9)$ with $M(G)\simeq \bZ/3\bZ$, 
$G=\PSL_2(\bF_9)\simeq A_6$ with $M(G)\simeq \bZ/2\bZ\oplus \bZ/3\bZ$, 
$G=\GL_2(\bF_9)$ with $M(G)=0$, 
$G=\PGL_2(\bF_9)$ with $M(G)\simeq \bZ/2\bZ$;\\
(iii) $G=\SL_3(\bF_2)= \PSL_3(\bF_2)=  
\GL_3(\bF_2)= \PGL_3(\bF_2)\simeq \PSL_2(\bF_7)$ 
with $M(G)\simeq \bZ/2\bZ$;\\
(iv) $G=\SL_3(\bF_4)$ with $M(G)\simeq (\bZ/4\bZ)^{\oplus 2}$, 
$G=\PSL_3(\bF_4)\simeq M_{21}$ with $M(G)\simeq (\bZ/4\bZ)^{\oplus 2}\oplus \bZ/3\bZ$, 
$G=\GL_3(\bF_4)$ with $M(G)=0$, 
$G=\PGL_3(\bF_4)$ with $M(G)\simeq \bZ/3\bZ$;\\
(v) $G=\SL_4(\bF_2)= \PSL_4(\bF_2)= 
\GL_4(\bF_2)= \PGL_4(\bF_2)\simeq A_8$ 
with $M(G)\simeq \bZ/2\bZ$\\
(see Gorenstein \cite[Theorem 2.13, Table 4.1]{Gor82}, 
Karpilovsky \cite[Section 8.4, Table 8.5]{Kar87} with Lie type $A_n(q)=\PSL_{n+1}(\bF_q)$ and GAP computations as in Section \ref{GAPcomp2}, see also Sonn \cite[Proposition 8, page 404]{Son94}). 
\end{remark}
\end{example}
\begin{example}[$G=\SU_n(\bF_q)$: the special unitary group of degree $n$ over $\bF_q$ with $M(G)=0$; $G=\PSU_n(\bF_q)$: the projective special unitary group of degree $n$ over $\bF_q$ with $M(G)=0$; $G={\rm GU}_n(\bF_q)$: the general unitary group of degree $n$ over $\bF_q$ with $M(G)=0$; $G=\PGU_n(\bF_q)$: the projective general unitary group of degree $n$ over $\bF_q$ with $M(G)=0$]\label{exapp3}
Let $n\geq 3$ be an integer and $q=p^r\geq 2$ be prime power. 
Assume that $(n,q)\neq (4,2),(4,3),(6,2)$. 
Then\\
{\rm (1)} 
If $G=\SU_n(\bF_q)$, 
then $\Sha^2_\omega(G,J_G)=H^2(G,J_G)\simeq M(G)=0$;\\ 
{\rm (2)} 
If $G=\PSU_n(\bF_q)$, 
then $\Sha^2_\omega(G,J_G)=H^2(G,J_G)\simeq M(G)\simeq \bZ/v\bZ$ 
where $v=\gcd(n,q+1)$. 
In particular, 
if $\gcd(n,q+1)=1$, then $\PSU_n(\bF_q)=\SU_n(\bF_q)$ and 
$\Sha^2_\omega(G,J_G)=H^2(G,J_G)\simeq M(G)=0$;\\
{\rm (3)}
If $G={\rm GU}_n(\bF_q)$, 
then $\Sha^2_\omega(G,J_G)=H^2(G,J_G)\simeq M(G)=0$;\\
{\rm (4)}
If $G=\PGU_n(\bF_q)$, 
then $\Sha^2_\omega(G,J_G)=H^2(G,J_G)\simeq M(G)\simeq \bZ/v\bZ$ 
where $v=\gcd(n,q+1)$. 
In particular, 
if $\gcd(n,q+1)=1$, then $\PGU_n(\bF_q)=\PSU_n(\bF_q)=\SU_n(\bF_q)$ and 
$\Sha^2_\omega(G,J_G)=H^2(G,J_G)\simeq M(G)=0$. 

When $k$ is a global field, $\Sha^2_\omega(G,J_G)=0$ implies that 
$A(T)=0$, i.e. 
$T$ has the weak approximation property, 
and $\Sha(T)=0$, i.e. the Hasse norm principle holds for $K/k$ 
$($that is, Hasse principle holds for all torsors $E$ under $T$$)$ 
where $T=R^{(1)}_{K/k}(\bG_{m,K})$ is the norm one torus of $K/k$ 
$($see Section \ref{S3} and Ono's theorem $($Theorem \ref{thOno}$))$. 

For (1), (2), we have $M(\PSU_n(\bF_q))\simeq \bZ/v\bZ$ 
(see Gorenstein \cite[Table 4.1, page 302]{Gor82}, 
Karpilovsky \cite[Table 8.5, page 283]{Kar87} 
with Lie type ${}^2\!A_{n}(q)=\PSU_{n+1}(\bF_q)$, 
see also \cite[Section 2.1, page 73]{Gor82}) 
and a Schur cover of $\PSU_n(\bF_q)$: 
\begin{align*}
1\to Z(\SU_n(\bF_q))\simeq M(\PSU_n(\bF_q))\simeq \bZ/v\bZ\to 
\SU_n(\bF_q)\to \PSU_n(\bF_q)\to 1.
\end{align*}


For (3), by $\SU_n(\bF_q)$ $\lhd$ $\GU_n(\bF_q)$, 
$\SU_n(\bF_q)^{ab}=1$ and 
$\GU_n(\bF_q)/\SU_n(\bF_q)\simeq (\bF_{q^2}^\times)^{(1)}\simeq C_{q+1}$; 
cyclic, we get 
$M(\GU_n(\bF_q))$ $\leq$ $M(\SU_n(\bF_q))=0$ 
(see Sonn \cite[Lemma 6, Proposition 8]{Son94}). 

For (4), by $\PSU_n(\bF_q)$ $\lhd$ $\PGU_n(\bF_q)$, 
$\PSU_n(\bF_q)^{ab}=1$ and 
$\PGU_n(\bF_q)/\PSU_n(\bF_q)$ $\simeq$ 
$(\bF_{q^2}^\times)^{(1)}/((\bF_{q^2}^\times)^{(1)})^n\simeq C_v$; cyclic, 
we get 
$[\GU_n(\bF_q),\GU_n(\bF_q)]$ $\cap$ $Z(\GU_n(\bF_q))=Z(\SU_n(\bF_q))\simeq C_v
\leq M(\PGU_n(\bF_q))\leq M(\PSU_n(\bF_q)\simeq \bZ/v\bZ$  
(see Sonn \cite[Lemma 6, Lemma 7, Proposition 8]{Son94}). 

Indeed, we get a commutative diagram with exact rows and columns: 
\begin{align*}
\xymatrix@=20pt{
 & 1 \ar[d] & 1 \ar[d] & 1 \ar[d] & \\
1 \ar[r] & Z(\SU_n(\bF_q))\simeq C_v \ar[r] \ar[d] & \SU_n(\bF_q) \ar[r] \ar[d] & \PSU_n(\bF_q) \ar[r] \ar[d] & 1\\
1 \ar[r] & Z(\GU_n(\bF_q))\simeq (\bF_{q^2}^\times)^{(1)} \ar[r] \ar[d]_{x\mapsto x^n} & \GU_n(\bF_q) \ar[r] \ar[d]_{\rm det} & \PGU_n(\bF_q) \ar[r] \ar[d]_{\overline{\rm det}} & 1\\
1 \ar[r] & ((\bF_{q^2}^\times)^{(1)})^n=((\bF_{q^2}^\times)^{(1)})^v \ar[r] \ar[d] & (\bF_{q^2}^\times)^{(1)} \ar[r] \ar[d] & (\bF_{q^2}^\times)^{(1)}/((\bF_{q^2}^\times)^{(1)})^n \ar[r] \ar[d] & 1\\
 & 1 & 1 & 1 & 
}
\end{align*}
where $(\bF_{q^2}^\times)^{(1)}=
{\rm Ker}\{N_{\bF_{\!q^2}/\bF_q}:\bF_{q^2}^\times\to \bF_{q}^\times\}$ 
and $v=\gcd(n,q+1)$ 
with $Z(\SU_n(\bF_q))\simeq M(\PSU_n(\bF_q))\simeq \bZ/v\bZ$, 
$(\bF_{q^2}^\times)^{(1)}\simeq C_{q+1}$ and 
$(\bF_{q^2}^\times)^{(1)}/((\bF_{q^2}^\times)^{(1)})^n\simeq C_v$. 

For the $3$ exceptional cases, 
see Gorenstein \cite[Table 4.1, page 302]{Gor82} 
with Lie type ${}^2\!A_{n}(q)=\PSU_{n+1}(\bF_q)$, 
see also Remark \ref{rem8.4}. 
\begin{remark}\label{rem8.4}
(1) We see that $G=\PSU_n(\bF_q)$ $(n\geq 3)$ 
is simple except for the $1$ case $(n,q)=(3,2)$:\\ 
(i) $G=\PSU_3(\bF_2)\simeq (C_3)^2\rtimes Q_8$ with 
$M(G)\simeq \bZ/3\bZ$.\\
(2) The $3$ exceptional cases with $(n,q)=(4,2),(4,3),(6,2)$ satisfy that\\
(i) $G=\SU_4(\bF_2)=\PSU_4(\bF_2)=\PGU(\bF_2)$ 
with $M(G)\simeq \bZ/2\bZ$, 
$G=\GU_4(\bF_2)$ with $M(G)\simeq \bZ/2\bZ$;\\
(ii) 
$G=\SU_4(\bF_3)$ with $M(G)\simeq (\bZ/3\bZ)^{\oplus 2}$, 
$G=\PSU_4(\bF_3)$ with $M(G)\simeq \bZ/4\bZ\oplus (\bZ/3\bZ)^{\oplus 2}$, 
$G=\GU_4(\bF_3)$ with $M(G)=0$, 
$G=\PGU_4(\bF_3)$ with $M(G)\simeq \bZ/4\bZ$;\\
(iii) 
$G=\SU_6(\bF_2)$ with $M(G)\simeq (\bZ/2\bZ)^{\oplus 2}$, 
$G=\PSU_6(\bF_2)$ with $M(G)\simeq (\bZ/2\bZ)^{\oplus 2}\oplus \bZ/3\bZ$, 
$G=\GU_6(\bF_2)$ with $M(G)=0$, 
$G=\PGU_6(\bF_2)$ with $M(G)\simeq \bZ/3\bZ$\\
(see Gorenstein \cite[Theorem 2.13, Table 4.1]{Gor82}, 
Karpilovsky \cite[Section 8.4, Table 8.5]{Kar87} with Lie type ${}^2\!A_{n}(q)=\PSU_{n+1}(\bF_q)$ and GAP computations as in Section \ref{GAPcomp2}). 
We also note that 
$\SU_2(\bF_q)\simeq \SL_2(\bF_q)$, 
$\PSU_2(\bF_q)\simeq \PSL_2(\bF_q)$, 
$\PGU_2(\bF_q)\simeq \PGL_2(\bF_q)$. 
\end{remark}
\end{example}
%
\begin{example}[$G=\Sp_{2n}(\bF_q)$: the symplectic group of degree $2n$ over $\bF_q$ with $M(G)=0$; $G=\PSp_{2n}(\bF_q)$: the projective symplectic group of degree $2n$ over $\bF_q$ with $M(G)=0$]\label{exapp1}
Let $n\geq 2$ be an integer and $q=p^r\geq 2$ be prime power. 
Assume that $(n,q)\neq (2,2),(3,2)$. 
Then\\
{\rm (1)}
If $G=\Sp_{2n}(\bF_q)$, 
then $\Sha^2_\omega(G,J_G)=H^2(G,J_G)\simeq M(G)=0$;\\
{\rm (2)} 
If $G=\PSp_{2n}(\bF_q)$, 
then $\Sha^2_\omega(G,J_G)=H^2(G,J_G)\simeq M(G)\simeq \bZ/w\bZ$ 
where $w=\gcd(2,q-1)$. In particular, 
if $\gcd(2,q-1)=1$, i.e. $q=2^r$, 
then $\PSp_{2n}(\bF_q)={\rm Sp}_{2n}(\bF_q)$ and 
$\Sha^2_\omega(G,J_G)=H^2(G,J_G)\simeq M(G)=0$. 

When $k$ is a global field, $\Sha^2_\omega(G,J_G)=0$ implies that 
$A(T)=0$, i.e. 
$T$ has the weak approximation property, 
and $\Sha(T)=0$, i.e. the Hasse norm principle holds for $K/k$ 
$($that is, Hasse principle holds for all torsors $E$ under $T$$)$ 
where $T=R^{(1)}_{K/k}(\bG_{m,K})$ is the norm one torus of $K/k$  
$($see Section \ref{S3} and Ono's theorem $($Theorem \ref{thOno}$))$. 

Indeed, we have $M(\PSp_{2n}(\bF_q))\simeq \bZ/w\bZ$ 
(see Gorenstein \cite[Table 4.1, page 302]{Gor82}, 
Karpilovsky \cite[Table 8.5, page 283]{Kar87} 
with Lie type $C_{n}(q)=\PSp_{2n}(\bF_q)$, 
see also Sonn \cite[Proposition 8]{Son94}) 
and a Schur cover of $\PSp_{2n}(\bF_q)$: 
\begin{align*}
1\to Z({\rm Sp}_{2n}(\bF_q))\simeq M(\PSp_{2n}(\bF_q)\simeq 
\bZ/w\bZ\to {\rm Sp}_{2n}(\bF_q)\to \PSp_{2n}(\bF_q)\to 1.
\end{align*}
For the $2$ exceptional cases, 
see Gorenstein \cite[Table 4.1, page 302]{Gor82} 
with Lie type $C_{n}(q)=\PSp_{2n}(\bF_q)$, 
see also Remark \ref{rem8.6}. 
\begin{remark}\label{rem8.6}
(1) We see that $G=\PSp_{2n}(\bF_q)$ $(n\geq 2)$ 
is simple except for the $1$ case $(n,q)=(2,2)$:\\ 
(i) $G=\PSp_4(\bF_2)\simeq S_6$ with $M(G)\simeq \bZ/2\bZ$.\\
(2) The $2$ exceptional cases with $(n,q)=(2,2),(3,2)$ satisfy that\\
(i) $G=\Sp_4(\bF_2)=\PSp_4(\bF_2)\simeq S_6$ 
with $M(G)\simeq \bZ/2\bZ$;\\
(ii) $G=\Sp_6(\bF_2)=\PSp_6(\bF_2)$ 
with $M(G)\simeq \bZ/2\bZ$\\
(see Gorenstein \cite[Theorem 2.13, Table 4.1]{Gor82}, 
Karpilovsky \cite[Section 8.4, Table 8.5]{Kar87} with Lie type 
$C_{n}(q)=\PSp_{2n}(\bF_q)$ and GAP computations as in Section \ref{GAPcomp2}, 
see also Sonn \cite[Proposition 8, page 404]{Son94}). 
We note that $\Sp_2(\bF_q)\simeq \SL_2(\bF_q)$, 
$\PSp_2(\bF_q)\simeq \PSL_2(\bF_q)$. 
\end{remark}
\end{example}
%

\begin{example}[The $13$ sporadic finite simple groups $G$ with $M(G)=0$]\label{exapp3}
If $G$ is one of the following $13$ sporadic finite simple groups: 
Mathieu groups $M_{11}$, $M_{23}$, $M_{24}$; 
Janko groups $J_1$, $J_4$; 
Conway groups $Co_2$, $Co_3$; 
Fischer group $Fi_{23}$; 
Held group $He$; 
Harada-Norton group $HN$; 
Lyons group $Ly$; 
Thompson group $Th$; 
Monster group $M$, 
then $\Sha^2_\omega(G,J_G)=H^2(G,J_G)\simeq M(G)=0$. 

When $k$ is a global field, $\Sha^2_\omega(G,J_G)=0$ implies that 
$A(T)=0$, i.e. 
$T$ has the weak approximation property, 
and $\Sha(T)=0$, i.e. the Hasse norm principle holds for $K/k$ 
$($that is, Hasse principle holds for all torsors $E$ under $T$$)$ 
where $T=R^{(1)}_{K/k}(\bG_{m,K})$ is the norm one torus of $K/k$ 
$($see Section \ref{S3} and Ono's theorem $($Theorem \ref{thOno}$))$. 

For Schur multipliers $M(G)$ of the $26$ sporadic finite simple groups $G$, 
see Gorenstein \cite[Table 4.1, page 203]{Gor82}, 
Karpilovsky \cite[Section 8.4, Table 8.5, page 283]{Kar87}. 
\end{example}

\section{GAP computations: Schur multipliers $M(G)$ and $M(H)$ as in Table 1, Table 2, Table 3 and Table 4}\label{GAPcomp}

The GAP algorithms and related ones can be available as {\tt HNP.gap} 
in \cite{Norm1ToriHNP}. 

\begin{verbatim}
gap> Read("HNP.gap");
gap> for n in [2..4] do
> for m in [1..NrTransitiveGroups(n)] do
> G:=TransitiveGroup(n,m); # G=nTm
> H:=Stabilizer(G,1); # H with [G:H]=n
> if GroupCohomology(G,3)=[] and Intersection(DerivedSubgroup(G),H)=DerivedSubgroup(H) then
> Print([n,m],"\t",StructureDescription(G:short),"\t",StructureDescription(H:short),"\n");
> fi;od;od;
[ 2, 1 ]	2	1
[ 3, 1 ]	3	1
[ 3, 2 ]	S3	2
[ 4, 1 ]	4	1
\end{verbatim}

\begin{verbatim}
gap> Read("HNP.gap");
gap> for n in [2..4] do
> for m in [1..NrTransitiveGroups(n)] do
> G:=TransitiveGroup(n,m); # G=nTm
> H:=Stabilizer(G,1); # H with [G:H]=n
> if GroupCohomology(G,3)<>[] and GroupCohomology(H,3)=[] and 
> Intersection(DerivedSubgroup(G),H)=DerivedSubgroup(H) then
> Print([n,m],"\t",StructureDescription(G:short),"\t",
> StructureDescription(H:short),"\t",GroupCohomology(G,3),"\n");
> fi;od;od;
[ 4, 2 ]	2^2	1	[ 2 ]
[ 4, 3 ]	D8	2	[ 2 ]
[ 4, 4 ]	A4	3	[ 2 ]
[ 4, 5 ]	S4	S3	[ 2 ]
\end{verbatim}

\begin{verbatim}
gap> Read("HNP.gap");
gap> for n in [5..30] do
> for m in [1..NrTransitiveGroups(n)-2] do # An and Sn can be omitted
> G:=TransitiveGroup(n,m); # G=nTm
> H:=Stabilizer(G,1); # H with [G:H]=n
> if IsMetacyclic(G)=true and GroupCohomology(G,3)=[] and 
> Intersection(DerivedSubgroup(G),H)=DerivedSubgroup(H) then
> Print([n,m],"\t",StructureDescription(G:short),"\t",StructureDescription(H:short),"\n");
> fi;od;od;
[ 5, 1 ]	5	1
[ 5, 2 ]	D10	2
[ 5, 3 ]	5:4	4
[ 6, 1 ]	6	1
[ 6, 2 ]	S3	1
[ 6, 5 ]	3xS3	3
[ 7, 1 ]	7	1
[ 7, 2 ]	D14	2
[ 7, 3 ]	7:3	3
[ 7, 4 ]	7:6	6
[ 8, 1 ]	8	1
[ 8, 5 ]	Q8	1
[ 8, 7 ]	8:2	2
[ 8, 8 ]	QD16	2
[ 9, 1 ]	9	1
[ 9, 3 ]	D18	2
[ 9, 4 ]	3xS3	2
[ 9, 6 ]	9:3	3
[ 9, 10 ]	9:6	6
[ 10, 1 ]	10	1
[ 10, 2 ]	D10	1
[ 10, 4 ]	5:4	2
[ 10, 6 ]	5xD10	5
[ 11, 1 ]	11	1
[ 11, 2 ]	D22	2
[ 11, 3 ]	11:5	5
[ 11, 4 ]	11:10	10
[ 12, 1 ]	12	1
[ 12, 5 ]	3:4	1
[ 12, 19 ]	3x(3:4)	3
[ 13, 1 ]	13	1
[ 13, 2 ]	D26	2
[ 13, 3 ]	13:3	3
[ 13, 4 ]	13:4	4
[ 13, 5 ]	13:6	6
[ 13, 6 ]	13:12	12
[ 14, 1 ]	14	1
[ 14, 2 ]	D14	1
[ 14, 4 ]	7:6	3
[ 14, 5 ]	2x(7:3)	3
[ 14, 8 ]	7xD14	7
[ 15, 1 ]	15	1
[ 15, 2 ]	D30	2
[ 15, 3 ]	3xD10	2
[ 15, 4 ]	5xS3	2
[ 15, 6 ]	15:4	4
[ 15, 8 ]	3x(5:4)	4
[ 16, 1 ]	16	1
[ 16, 6 ]	8:2	1
[ 16, 12 ]	QD16	1
[ 16, 14 ]	Q16	1
[ 16, 22 ]	16:2	2
[ 16, 49 ]	4.D8=4.(4x2)	2
[ 16, 55 ]	QD32	2
[ 16, 124 ]	8.D8 = 4.(8x2)	4
[ 16, 125 ]	16:4	4
[ 16, 136 ]	16:4	4
[ 17, 1 ]	17	1
[ 17, 2 ]	D34	2
[ 17, 3 ]	17:4	4
[ 17, 4 ]	17:8	8
[ 17, 5 ]	17:16	16
[ 18, 1 ]	18	1
[ 18, 3 ]	3xS3	1
[ 18, 5 ]	D18	1
[ 18, 14 ]	2x(9:3)	3
[ 18, 16 ]	9xS3	3
[ 18, 18 ]	9:6	3
[ 18, 19 ]	3xD18	3
[ 18, 74 ]	9xD18	9
[ 18, 80 ]	(9:9):2	9
[ 19, 1 ]	19	1
[ 19, 2 ]	D38	2
[ 19, 3 ]	19:3	3
[ 19, 4 ]	19:6	6
[ 19, 5 ]	19:9	9
[ 19, 6 ]	19:18	18
[ 20, 1 ]	20	1
[ 20, 2 ]	5:4	1
[ 20, 5 ]	5:4	1
[ 20, 25 ]	5x(5:4)	5
[ 20, 29 ]	5x(5:4)	5
[ 21, 1 ]	21	1
[ 21, 2 ]	7:3	1
[ 21, 3 ]	3xD14	2
[ 21, 4 ]	7:6	2
[ 21, 5 ]	D42	2
[ 21, 6 ]	7xS3	2
[ 21, 10 ]	7:(3xS3)	6
[ 21, 11 ]	(7:3)xS3	6
[ 21, 13 ]	7x(7:3)	7
[ 22, 1 ]	22	1
[ 22, 2 ]	D22	1
[ 22, 4 ]	11:10	5
[ 22, 5 ]	2x(11:5)	5
[ 22, 7 ]	11xD22	11
[ 23, 1 ]	23	1
[ 23, 2 ]	D46	2
[ 23, 3 ]	23:11	11
[ 23, 4 ]	23:22	22
[ 24, 1 ]	24	1
[ 24, 4 ]	3xQ8	1
[ 24, 5 ]	3:Q8	1
[ 24, 8 ]	3:8	1
[ 24, 16 ]	3x(8:2)	2
[ 24, 20 ]	(3:8):2	2
[ 24, 31 ]	24:2	2
[ 24, 35 ]	24:2	2
[ 24, 41 ]	3xQD16	2
[ 24, 64 ]	3x(3:Q8)	3
[ 24, 69 ]	3x(3:8)	3
[ 24, 211 ]	3x((3:8):2)	6
[ 25, 1 ]	25	1
[ 25, 3 ]	5xD10	2
[ 25, 4 ]	D50	2
[ 25, 7 ]	5x(5:4)	4
[ 25, 8 ]	25:4	4
[ 25, 13 ]	25:5	5
[ 25, 25 ]	(25:5):2	10
[ 25, 40 ]	(25:5):4	20
[ 26, 1 ]	26	1
[ 26, 2 ]	D26	1
[ 26, 4 ]	13:4	2
[ 26, 5 ]	2x(13:3)	3
[ 26, 6 ]	13:6	3
[ 26, 8 ]	13:12	6
[ 26, 11 ]	13xD26	13
[ 27, 1 ]	27	1
[ 27, 5 ]	9:3	1
[ 27, 8 ]	D54	2
[ 27, 9 ]	3xD18	2
[ 27, 12 ]	9xS3	2
[ 27, 14 ]	9:6	2
[ 27, 22 ]	27:3	3
[ 27, 55 ]	(27:3):2	6
[ 27, 107 ]	27:9	9
[ 27, 176 ]	(27:9):2	18
[ 28, 1 ]	28	1
[ 28, 3 ]	7:4	1
[ 28, 12 ]	7:12	3
[ 28, 13 ]	4x(7:3)	3
[ 28, 33 ]	7x(7:4)	7
[ 29, 1 ]	29	1
[ 29, 2 ]	D58	2
[ 29, 3 ]	29:4	4
[ 29, 4 ]	29:7	7
[ 29, 5 ]	29:14	14
[ 29, 6 ]	29:28	28
[ 30, 1 ]	30	1
[ 30, 2 ]	5xS3	1
[ 30, 3 ]	D30	1
[ 30, 4 ]	3xD10	1
[ 30, 6 ]	15:4	2
[ 30, 7 ]	3x(5:4)	2
[ 30, 15 ]	15xS3	3
[ 30, 16 ]	3xD30	3
[ 30, 36 ]	5xD30	5
[ 30, 39 ]	15xD10	5
[ 30, 47 ]	3x(15:4)	6
[ 30, 104 ]	15xD30	15
\end{verbatim}

\begin{verbatim}
gap> Read("HNP.gap");
gap> for n in [5..30] do
> for m in [1..NrTransitiveGroups(n)-2] do # An and Sn can be omitted
> G:=TransitiveGroup(n,m); # G=nTm
> H:=Stabilizer(G,1); # H with [G:H]=n
> if IsMetacyclic(G)=true and GroupCohomology(G,3)<>[] and GroupCohomology(H,3)=[] and 
> Intersection(DerivedSubgroup(G),H)=DerivedSubgroup(H) then
> Print([n,m],"\t",StructureDescription(G:short),"\t",
> StructureDescription(H:short),"\t",GroupCohomology(G,3),"\n");
> fi;od;od;
[ 6, 3 ]	D12	2	[ 2 ]
[ 8, 2 ]	4x2	1	[ 2 ]
[ 8, 4 ]	D8	1	[ 2 ]
[ 8, 6 ]	D16	2	[ 2 ]
[ 9, 2 ]	3^2	1	[ 3 ]
[ 10, 3 ]	D20	2	[ 2 ]
[ 10, 5 ]	2x(5:4)	4	[ 2 ]
[ 12, 2 ]	6x2	1	[ 2 ]
[ 12, 3 ]	D12	1	[ 2 ]
[ 12, 11 ]	4xS3	2	[ 2 ]
[ 12, 12 ]	D24	2	[ 2 ]
[ 12, 14 ]	3xD8	2	[ 2 ]
[ 12, 18 ]	6xS3	3	[ 2 ]
[ 14, 3 ]	D28	2	[ 2 ]
[ 14, 7 ]	2x(7:6)	6	[ 2 ]
[ 16, 4 ]	4^2	1	[ 4 ]
[ 16, 5 ]	8x2	1	[ 2 ]
[ 16, 8 ]	4:4	1	[ 2 ]
[ 16, 13 ]	D16	1	[ 2 ]
[ 16, 56 ]	D32	2	[ 2 ]
[ 18, 2 ]	6x3	1	[ 3 ]
[ 18, 6 ]	6xS3	2	[ 2 ]
[ 18, 13 ]	D36	2	[ 2 ]
[ 18, 45 ]	2x(9:6)	6	[ 2 ]
[ 20, 3 ]	10x2	1	[ 2 ]
[ 20, 4 ]	D20	1	[ 2 ]
[ 20, 6 ]	4xD10	2	[ 2 ]
[ 20, 9 ]	2x(5:4)	2	[ 2 ]
[ 20, 10 ]	D40	2	[ 2 ]
[ 20, 12 ]	5xD8	2	[ 2 ]
[ 20, 13 ]	2x(5:4)	2	[ 2 ]
[ 20, 18 ]	20:4	4	[ 2 ]
[ 20, 20 ]	4x(5:4)	4	[ 4 ]
[ 20, 24 ]	10xD10	5	[ 2 ]
[ 21, 7 ]	3x(7:3)	3	[ 3 ]
[ 21, 9 ]	3x(7:6)	6	[ 3 ]
[ 22, 3 ]	D44	2	[ 2 ]
[ 22, 6 ]	2x(11:10)	10	[ 2 ]
[ 24, 2 ]	12x2	1	[ 2 ]
[ 24, 6 ]	2x(3:4)	1	[ 2 ]
[ 24, 12 ]	4xS3	1	[ 2 ]
[ 24, 13 ]	D24	1	[ 2 ]
[ 24, 15 ]	3xD8	1	[ 2 ]
[ 24, 32 ]	8xS3	2	[ 2 ]
[ 24, 34 ]	D48	2	[ 2 ]
[ 24, 40 ]	3xD16	2	[ 2 ]
[ 24, 65 ]	12xS3	3	[ 2 ]
[ 24, 66 ]	6x(3:4)	3	[ 2 ]
[ 24, 67 ]	3xD24	3	[ 2 ]
[ 25, 2 ]	5^2	1	[ 5 ]
[ 26, 3 ]	D52	2	[ 2 ]
[ 26, 7 ]	2x(13:4)	4	[ 2 ]
[ 26, 9 ]	2x(13:6)	6	[ 2 ]
[ 26, 10 ]	2x(13:12)	12	[ 2 ]
[ 27, 2 ]	9x3	1	[ 3 ]
[ 28, 2 ]	14x2	1	[ 2 ]
[ 28, 4 ]	D28	1	[ 2 ]
[ 28, 5 ]	7xD8	2	[ 2 ]
[ 28, 8 ]	4xD14	2	[ 2 ]
[ 28, 10 ]	D56	2	[ 2 ]
[ 28, 14 ]	2^2x(7:3)	3	[ 2 ]
[ 28, 15 ]	2x(7:6)	3	[ 2 ]
[ 28, 22 ]	D8x(7:3)	6	[ 2 ]
[ 28, 23 ]	7:(3xD8)	6	[ 2 ]
[ 28, 26 ]	4x(7:6)	6	[ 2 ]
[ 28, 34 ]	14xD14	7	[ 2 ]
[ 30, 5 ]	6xD10	2	[ 2 ]
[ 30, 12 ]	10xS3	2	[ 2 ]
[ 30, 14 ]	D60	2	[ 2 ]
[ 30, 17 ]	2x(15:4)	4	[ 2 ]
[ 30, 26 ]	6x(5:4)	4	[ 2 ]
\end{verbatim}

\begin{verbatim}
gap> Read("HNP.gap");
gap> for n in [5..19] do
> for m in [1..NrTransitiveGroups(n)-2] do # An and Sn can be omitted
> G:=TransitiveGroup(n,m); # G=nTm
> H:=Stabilizer(G,1); # H with [G:H]=n
> if IsMetacyclic(G)=false and GroupCohomology(G,3)=[] and 
> Intersection(DerivedSubgroup(G),H)=DerivedSubgroup(H) then
> Print([n,m],"\t",StructureDescription(G:short),"\t",StructureDescription(H:short),"\n");
> fi;od;od;
[ 8, 12 ]	SL(2,3)	3
[ 8, 23 ]	GL(2,3)	S3
[ 8, 25 ]	(2^3):7	7
[ 8, 36 ]	2^3:(7:3)	7:3
[ 9, 12 ]	((3^2):3):2	S3
[ 9, 15 ]	(3^2):8	8
[ 9, 19 ]	3^2:QD16	QD16
[ 9, 20 ]	((3^3):3):2	3xS3
[ 9, 26 ]	(((3^2):Q8):3):2	GL(2,3)
[ 9, 32 ]	PSL(2,8):3	2^3:(7:3)
[ 10, 18 ]	5^2:8	5:4
[ 12, 46 ]	(3^2):8	S3
[ 12, 272 ]	M11	PSL(2,11)
[ 14, 11 ]	2^3:(7:3)	A4
[ 14, 14 ]	7^2:6	7:3
[ 14, 18 ]	2x(2^3:(7:3))	2xA4
[ 14, 23 ]	7^2:12	7:6
[ 15, 13 ]	5^2:S3	D10
[ 15, 19 ]	5^2:12	5:4
[ 15, 32 ]	5x(5^2:S3)	5xD10
[ 15, 38 ]	5^3:12	(5^2):4
[ 15, 41 ]	3^4:(5:4)	3^3:4
[ 15, 56 ]	3x(3^4:(5:4))	3x(3^3:4)
[ 16, 59 ]	2xSL(2,3)	3
[ 16, 60 ]	((4x2):2):3	3
[ 16, 196 ]	2x((2^3):7)	7
[ 16, 439 ]	(4.4^2):3	A4
[ 16, 447 ]	2^4:15	15
[ 16, 712 ]	2x(2^3:(7:3))	7:3
[ 16, 728 ]	((4.4^2):2):3	2xA4
[ 16, 732 ]	((4.4^2):2):3	2xA4
[ 16, 773 ]	((4.4^2):3):2	S4
[ 16, 777 ]	(((2^4):5):2):3	3xD10
[ 16, 1064 ]	((4.4^2):3):4	A4:4
[ 16, 1075 ]	((2x((2^4):2)):2):7	(2^3):7
[ 16, 1079 ]	(2^4:15):4	15:4
[ 16, 1501 ]	((2x((2^4):2)):2):(7:3)	2^3:(7:3)
[ 16, 1503 ]	((2x((2^3):(2^2))):2):(7:3)	2^3:(7:3)
[ 16, 1798 ]	(((2^6:7):2):7):3	2^3:(7^2:3)
[ 17, 8 ]	PSL(2,16):4	(2^4:15):4
[ 18, 28 ]	(3^2):8	4
[ 18, 49 ]	((3^2):3):4	S3
[ 18, 158 ]	(9^2:3):2	9:3
[ 18, 280 ]	3^4:16	(3^2):8
[ 18, 427 ]	2x(PSL(2,8):3)	2^3:(7:3)
[ 18, 937 ]	(PSL(2,8)xPSL(2,8)):6	2^3:(7:(PSL(2,8):3))
\end{verbatim}

\begin{verbatim}
gap> Read("HNP.gap");
gap> for n in [5..19] do
> for m in [1..NrTransitiveGroups(n)-2] do # An and Sn can be omitted
> G:=TransitiveGroup(n,m); # G=nTm
> H:=Stabilizer(G,1); # H with [G:H]=n
> if IsMetacyclic(G)=false and GroupCohomology(G,3)<>[] and GroupCohomology(H,3)=[] and
> Intersection(DerivedSubgroup(G),H)=DerivedSubgroup(H) then
> Print([n,m],"\t",StructureDescription(G:short),"\t",
> StructureDescription(H:short),"\t",GroupCohomology(G,3),"\n");
> fi;od;od;
[ 6, 9 ]	S3xS3	S3	[ 2 ]
[ 6, 10 ]	(3^2):4	S3	[ 3 ]
[ 8, 3 ]	2^3	1	[ 2, 2, 2 ]
[ 8, 9 ]	2xD8	2	[ 2, 2, 2 ]
[ 8, 10 ]	(4x2):2	2	[ 2, 2 ]
[ 8, 11 ]	(4x2):2	2	[ 2, 2 ]
[ 8, 13 ]	2xA4	3	[ 2 ]
[ 8, 17 ]	(4^2):2	4	[ 2 ]
[ 8, 19 ]	(2^3):4	4	[ 2, 2 ]
[ 8, 24 ]	2xS4	S3	[ 2, 2 ]
[ 9, 5 ]	(3^2):2	2	[ 3 ]
[ 9, 7 ]	(3^2):3	3	[ 3, 3 ]
[ 9, 9 ]	(3^2):4	4	[ 3 ]
[ 9, 11 ]	(3^2):6	6	[ 3 ]
[ 9, 13 ]	(3^2):6	S3	[ 3 ]
[ 9, 14 ]	(3^2):Q8	Q8	[ 3 ]
[ 9, 23 ]	((3^2):Q8):3	SL(2,3)	[ 3 ]
[ 10, 9 ]	D10xD10	D10	[ 2 ]
[ 10, 10 ]	(5^2):4	D10	[ 5 ]
[ 10, 17 ]	5^2:(4x2)	5:4	[ 2 ]
[ 10, 35 ]	(A6.2):2	3^2:QD16	[ 2 ]
[ 12, 4 ]	A4	1	[ 2 ]
[ 12, 6 ]	2xA4	2	[ 2 ]
[ 12, 8 ]	S4	2	[ 2 ]
[ 12, 10 ]	2^2xS3	2	[ 2, 2, 2 ]
[ 12, 13 ]	(6x2):2	2	[ 2 ]
[ 12, 15 ]	(6x2):2	2	[ 2 ]
[ 12, 20 ]	3xA4	3	[ 2, 3 ]
[ 12, 27 ]	A4:4	4	[ 2 ]
[ 12, 35 ]	(S3xS3):2	S3	[ 2 ]
[ 12, 37 ]	2xS3xS3	S3	[ 2, 2, 2 ]
[ 12, 38 ]	(6xS3):2	S3	[ 2 ]
[ 12, 39 ]	(3^2):(4x2)	S3	[ 2 ]
[ 12, 40 ]	2x((3^2):4)	S3	[ 2, 3 ]
[ 12, 41 ]	2x((3^2):4)	S3	[ 2, 3 ]
[ 12, 42 ]	3x((6x2):2)	6	[ 2 ]
[ 12, 43 ]	A4xS3	6	[ 2 ]
[ 12, 44 ]	(3xA4):2	S3	[ 2, 3 ]
[ 12, 45 ]	3xS4	S3	[ 2 ]
[ 12, 76 ]	2xA5	D10	[ 2 ]
[ 12, 112 ]	(((4^2):3):2):2	QD16	[ 2, 2 ]
[ 12, 121 ]	3x((S3xS3):2)	3xS3	[ 2 ]
[ 12, 124 ]	A5:4	5:4	[ 2 ]
[ 12, 175 ]	((3^3:2^2):3):2	((3^2):3):2	[ 2 ]
[ 12, 231 ]	3x(((3^3:2^2):3):2)	((3^3):3):2	[ 2 ]
[ 12, 233 ]	(3x((3^3:2^2):3)):2	((3^3):3):2	[ 2, 3 ]
[ 12, 295 ]	M12	M11	[ 2 ]
[ 14, 12 ]	7^2:4	D14	[ 7 ]
[ 14, 13 ]	D14xD14	D14	[ 2 ]
[ 14, 24 ]	7^2:(6x2)	7:6	[ 2 ]
[ 15, 12 ]	5^2:6	D10	[ 5 ]
[ 15, 17 ]	5^2:(3:4)	5:4	[ 5 ]
[ 16, 2 ]	4x2^2	1	[ 2, 2, 2 ]
[ 16, 3 ]	2^4	1	[ 2, 2, 2, 2, 2, 2 ]
[ 16, 7 ]	2xQ8	1	[ 2, 2 ]
[ 16, 9 ]	2xD8	1	[ 2, 2, 2 ]
[ 16, 10 ]	(4x2):2	1	[ 2, 2 ]
[ 16, 11 ]	(4x2):2	1	[ 2, 2 ]
[ 16, 15 ]	2x(8:2)	2	[ 2, 2 ]
[ 16, 16 ]	(8x2):2	2	[ 2, 2 ]
[ 16, 17 ]	(4^2):2	2	[ 2, 4 ]
[ 16, 18 ]	2x((4x2):2)	2	[ 2, 2, 2, 2, 2 ]
[ 16, 19 ]	4xD8	2	[ 2, 2, 2 ]
[ 16, 20 ]	(2xQ8):2	2	[ 2, 2, 2, 2, 2 ]
[ 16, 21 ]	2x((4x2):2)	2	[ 2, 2, 2, 2 ]
[ 16, 23 ]	(2^3):(2^2)	2	[ 2, 2, 2, 2, 2 ]
[ 16, 24 ]	(8x2):2	2	[ 2, 2 ]
[ 16, 25 ]	2^2xD8	2	[ 2, 2, 2, 2, 2, 2 ]
[ 16, 26 ]	(8x2):2	2	[ 2, 2 ]
[ 16, 27 ]	(4^2):2	2	[ 2 ]
[ 16, 28 ]	(4^2):2	2	[ 2 ]
[ 16, 29 ]	2xD16	2	[ 2, 2, 2 ]
[ 16, 30 ]	(4^2):2	2	[ 2, 4 ]
[ 16, 31 ]	(2xQ8):2	2	[ 2, 2 ]
[ 16, 32 ]	(2xQ8):2	2	[ 2, 2 ]
[ 16, 33 ]	(2^3):4	2	[ 2, 2 ]
[ 16, 34 ]	(4x2^2):2	2	[ 2, 2, 2 ]
[ 16, 35 ]	8:(2^2)	2	[ 2, 2 ]
[ 16, 37 ]	(4x2^2):2	2	[ 2, 2 ]
[ 16, 38 ]	8:(2^2)	2	[ 2, 2 ]
[ 16, 39 ]	(2^4):2	2	[ 2, 2, 2, 2 ]
[ 16, 41 ]	(8:2):2	2	[ 2 ]
[ 16, 42 ]	(4^2):2	2	[ 2 ]
[ 16, 43 ]	(4x2^2):2	2	[ 2, 2, 2 ]
[ 16, 44 ]	(8x2):2	2	[ 2, 2 ]
[ 16, 45 ]	8:(2^2)	2	[ 2, 2 ]
[ 16, 46 ]	(2^4):2	2	[ 2, 2, 2, 2 ]
[ 16, 47 ]	(8x2):2	2	[ 2, 2 ]
[ 16, 48 ]	2xQD16	2	[ 2, 2 ]
[ 16, 50 ]	(2xQ8):2	2	[ 2, 2 ]
[ 16, 51 ]	(4^2):2	2	[ 2, 2, 4 ]
[ 16, 52 ]	(2^3):4	2	[ 2, 2 ]
[ 16, 54 ]	(4x2^2):2	2	[ 2, 2 ]
[ 16, 57 ]	4xA4	3	[ 2 ]
[ 16, 58 ]	2^2xA4	3	[ 2, 2 ]
[ 16, 63 ]	(4^2):3	3	[ 4 ]
[ 16, 64 ]	(2^4):3	3	[ 2, 2, 2, 2 ]
[ 16, 74 ]	4^2:4	4	[ 2, 4 ]
[ 16, 76 ]	2x((2^3):4)	4	[ 2, 2, 2, 2 ]
[ 16, 96 ]	(4x2^2):4	4	[ 2, 2, 2 ]
[ 16, 107 ]	((4^2):2):2	4	[ 2, 2, 2 ]
[ 16, 110 ]	(8:2):4	4	[ 2, 2 ]
[ 16, 111 ]	2x((4^2):2)	4	[ 2, 2, 2 ]
[ 16, 113 ]	((4^2):2):2	4	[ 2, 2 ]
[ 16, 114 ]	((4^2):2):2	4	[ 2, 2 ]
[ 16, 120 ]	((2^3):4):2	4	[ 2, 2, 4 ]
[ 16, 121 ]	4^2:4	4	[ 2, 2 ]
[ 16, 143 ]	((2^3):4):2	4	[ 2, 4 ]
[ 16, 148 ]	((4x2):2):4	4	[ 2 ]
[ 16, 156 ]	(8:4):2	4	[ 2 ]
[ 16, 161 ]	(2xQ8):4	4	[ 2 ]
[ 16, 163 ]	((4x2):4):2	4	[ 2, 2 ]
[ 16, 166 ]	((2^3):4):2	4	[ 2, 2 ]
[ 16, 176 ]	4^2:4	4	[ 2, 2 ]
[ 16, 178 ]	(2^4):5	5	[ 2, 2 ]
[ 16, 179 ]	D8xA4	6	[ 2, 2 ]
[ 16, 180 ]	((2xQ8):2):3	6	[ 2 ]
[ 16, 181 ]	4xS4	S3	[ 2, 2 ]
[ 16, 182 ]	2^2xS4	S3	[ 2, 2, 2, 2 ]
[ 16, 183 ]	((2^4):2):3	6	[ 2, 2 ]
[ 16, 184 ]	((4^2):2):3	6	[ 2 ]
[ 16, 185 ]	((4^2):2):3	6	[ 4 ]
[ 16, 186 ]	(A4:4):2	S3	[ 2, 2 ]
[ 16, 187 ]	GL(2,3):2	S3	[ 2 ]
[ 16, 188 ]	2xGL(2,3)	S3	[ 2 ]
[ 16, 189 ]	(2.S4=SL(2,3).2):2	S3	[ 2 ]
[ 16, 190 ]	GL(2,3):2	S3	[ 2 ]
[ 16, 191 ]	(2xS4):2	S3	[ 2, 2 ]
[ 16, 194 ]	((2^4):3):2	S3	[ 2, 2, 2 ]
[ 16, 195 ]	((4^2):3):2	S3	[ 2 ]
[ 16, 260 ]	(8.D8=4.(8x2)):2	8	[ 2 ]
[ 16, 289 ]	8^2:2	8	[ 2 ]
[ 16, 332 ]	(((2xQ8):2):2):2	Q8	[ 2, 2, 2 ]
[ 16, 338 ]	((4xQ8):2):2	Q8	[ 2, 2, 2 ]
[ 16, 351 ]	(8.D8=4.(8x2)):2	Q8	[ 2 ]
[ 16, 357 ]	(((4^2):2):2):2	Q8	[ 2, 2, 2 ]
[ 16, 370 ]	(2.((2^3):4)=(4x2).(4x2)):2	Q8	[ 2, 2, 2 ]
[ 16, 380 ]	(((2^3):4):2):2	Q8	[ 2, 2, 2 ]
[ 16, 381 ]	(((2^3):4):2):2	Q8	[ 2, 2, 4 ]
[ 16, 415 ]	((2^4):5):2	D10	[ 2, 2 ]
[ 16, 430 ]	((4^2):3):4	3:4	[ 4 ]
[ 16, 433 ]	((2^4):3):4	3:4	[ 2 ]
[ 16, 504 ]	((4x(8:2)):2):2	8:2	[ 2, 2, 4 ]
[ 16, 548 ]	(4^2:4):4	8:2	[ 2, 4 ]
[ 16, 568 ]	(8^2:2):2	8:2	[ 2, 2 ]
[ 16, 576 ]	((8:4):2):4	8:2	[ 2, 2 ]
[ 16, 580 ]	(16:4):4	8:2	[ 2 ]
[ 16, 601 ]	(((8x2):2):2):4	8:2	[ 2, 4 ]
[ 16, 605 ]	(((4:8):2):2):2	8:2	[ 2, 2, 4 ]
[ 16, 619 ]	((2.((4x2):2)=(2^2).(4x2)):2):4	8:2	[ 2, 2 ]
[ 16, 627 ]	(((8:4):2):2):2	8:2	[ 2, 2, 2 ]
[ 16, 655 ]	(((4:8):2):2):2	8:2	[ 2, 2, 2 ]
[ 16, 668 ]	(8:8):4	QD16	[ 8 ]
[ 16, 669 ]	((((2^3):4):2):2):2	QD16	[ 2, 2, 2 ]
[ 16, 672 ]	(((2.((4x2):2)=(2^2).(4x2)):2):2):2	QD16	[ 2, 2, 2 ]
[ 16, 674 ]	((2.((2^3):4)=(4x2).(4x2)):2):2	QD16	[ 2, 2, 2 ]
[ 16, 681 ]	(((2^2).(2^3)):4):2	QD16	[ 2 ]
[ 16, 683 ]	((4:8):2):4	QD16	[ 2 ]
[ 16, 687 ]	((8:Q8):2):2	QD16	[ 2, 2 ]
[ 16, 694 ]	8^2:4	QD16	[ 4 ]
[ 16, 704 ]	(2^2.((4x2):2)=(4x2).(4x2)):4	QD16	[ 2 ]
[ 16, 706 ]	(((2xQD16):2):2):2	QD16	[ 2, 2 ]
[ 16, 708 ]	(((2^4):3):2):3	3xS3	[ 2 ]
[ 16, 709 ]	S4xA4	3xS3	[ 2, 2 ]
[ 16, 711 ]	((2^4):5):4	5:4	[ 2 ]
[ 16, 730 ]	((Q8xQ8):2):3	SL(2,3)	[ 2 ]
[ 16, 734 ]	((((4^2):2):2):2):3	SL(2,3)	[ 2 ]
[ 16, 735 ]	((((2xQ8):2):2):2):3	SL(2,3)	[ 2 ]
[ 16, 1062 ]	(((((4^2):2):2):3):2):2	GL(2,3)	[ 2, 2 ]
[ 16, 1067 ]	(((2.((2^3):4)=(4x2).(4x2)):2):3):2	GL(2,3)	[ 2, 2 ]
[ 16, 1076 ]	(2.(2^3.2^3)=2^4.2^3):7	(2^3):7	[ 2 ]
[ 16, 1077 ]	((2x((2^3):(2^2))):2):7	(2^3):7	[ 2, 2 ]
[ 16, 1502 ]	((2^6:7):2):3	2^3:(7:3)	[ 2 ]
[ 18, 4 ]	(3^2):2	1	[ 3 ]
[ 18, 9 ]	S3xS3	2	[ 2 ]
[ 18, 10 ]	(3^2):4	2	[ 3 ]
[ 18, 11 ]	S3xS3	2	[ 2 ]
[ 18, 12 ]	2x((3^2):2)	2	[ 2, 3 ]
[ 18, 15 ]	2x((3^2):3)	3	[ 3, 3 ]
[ 18, 17 ]	3^2xS3	3	[ 3 ]
[ 18, 21 ]	(3^2):6	3	[ 3 ]
[ 18, 22 ]	(3^2):6	3	[ 3 ]
[ 18, 23 ]	3x((3^2):2)	3	[ 3 ]
[ 18, 27 ]	2x((3^2):4)	4	[ 2, 3 ]
[ 18, 41 ]	2x((3^2):6)	6	[ 2, 3 ]
[ 18, 42 ]	2x((3^2):6)	S3	[ 2, 3 ]
[ 18, 43 ]	3xS3xS3	S3	[ 2 ]
[ 18, 44 ]	3x((3^2):4)	S3	[ 3 ]
[ 18, 46 ]	3xS3xS3	6	[ 2 ]
[ 18, 50 ]	D18xS3	S3	[ 2 ]
[ 18, 51 ]	((3^2):3):2^2	S3	[ 2 ]
[ 18, 52 ]	2x(((3^2):3):2)	S3	[ 2 ]
[ 18, 53 ]	3^3:2^2	S3	[ 2 ]
[ 18, 54 ]	3^3:4	S3	[ 3 ]
[ 18, 56 ]	((3^2):3):2^2	S3	[ 2 ]
[ 18, 57 ]	((3^2):3):2^2	S3	[ 2 ]
[ 18, 58 ]	((3^2):2)xS3	S3	[ 2, 3 ]
[ 18, 59 ]	2x((3^2):8)	8	[ 2 ]
[ 18, 64 ]	2x((3^2):Q8)	Q8	[ 2, 2, 3 ]
[ 18, 110 ]	2x(3^2:QD16)	QD16	[ 2, 2 ]
[ 18, 118 ]	3x(((3^2):3):2^2)	3xS3	[ 2 ]
[ 18, 119 ]	2x(((3^3):3):2)	3xS3	[ 2 ]
[ 18, 120 ]	3x(3^3:2^2)	3xS3	[ 2 ]
[ 18, 121 ]	((3^2):6)xS3	3xS3	[ 2, 3 ]
[ 18, 122 ]	(9:6)xS3	3xS3	[ 2 ]
[ 18, 123 ]	3x(3^3:4)	3xS3	[ 3 ]
[ 18, 126 ]	3x(((3^2):3):2^2)	3xS3	[ 2 ]
[ 18, 130 ]	9^2:4	D18	[ 9 ]
[ 18, 140 ]	D18xD18	D18	[ 2 ]
[ 18, 151 ]	2x(((3^2):Q8):3)	SL(2,3)	[ 3 ]
[ 18, 229 ]	2x((((3^2):Q8):3):2)	GL(2,3)	[ 2 ]
[ 18, 233 ]	((9:9):3):2^2	9:6	[ 2 ]
[ 18, 235 ]	((9:9):3):4	9:6	[ 3 ]
[ 18, 244 ]	((3x((3^2):3)):3):2^2	((3^2):3):2	[ 2 ]
[ 18, 281 ]	3^4:(8x2)	(3^2):8	[ 2 ]
[ 18, 383 ]	3^4:((8x2):2)	3^2:QD16	[ 2, 2 ]
[ 18, 385 ]	3^4:(8:(2^2))	3^2:QD16	[ 2, 2 ]
[ 18, 386 ]	3^4:((2xQ8):2)	3^2:QD16	[ 2, 2, 3 ]
[ 18, 393 ]	3^4:(2xQD16)	3^2:QD16	[ 2, 2 ]
[ 18, 524 ]	(((3^4:Q8):3):2):2	(((3^2):Q8):3):2	[ 2 ]
[ 18, 526 ]	(((3^4:Q8):3):2):2	(((3^2):Q8):3):2	[ 2 ]
[ 18, 527 ]	((3^4:(2xQ8)):3):2	(((3^2):Q8):3):2	[ 2, 3 ]
[ 18, 528 ]	((3^4:((4x2):2)):3):2	(((3^2):Q8):3):2	[ 2 ]
\end{verbatim}

\section{GAP computations: Schur multipliers $M(G)$ as in Remark \ref{rem8.2}, Remark \ref{rem8.4} and Remark \ref{rem8.6}}\label{GAPcomp2}

\begin{verbatim}
gap> LoadPackage("HAP");
true
gap> LoadPackage("Sonata");
true
gap> IsIsomorphicGroup(SL(2,4),PSL(2,4));
true
gap> IsIsomorphicGroup(PGL(2,4),PSL(2,4));
true
gap> GroupCohomology(PSL(2,4),3);
[ 2 ]
gap> GroupCohomology(GL(2,4),3);
[ 2 ]
gap> GroupCohomology(SL(2,9),3);
[ 3 ]
gap> GroupCohomology(PSL(2,9),3);
[ 2, 3 ]
gap> GroupCohomology(GL(2,9),3);
[  ]
gap> GroupCohomology(PGL(2,9),3);
[ 2 ]
gap> IsIsomorphicGroup(SL(3,2),PSL(3,2));
true
gap> IsIsomorphicGroup(GL(3,2),PSL(3,2));
true
gap> IsIsomorphicGroup(PGL(3,2),PSL(3,2));
true
gap> GroupCohomology(PSL(3,2),3);
[ 2 ]
gap> GroupCohomology(SL(3,4),3);
[ 4, 4 ]
gap> GroupCohomology(PSL(3,4),3);
[ 4, 4, 3 ]
gap> GroupCohomology(GL(3,4),3);
[  ]
gap> GroupCohomology(PGL(3,4),3);
[ 3 ]
gap> IsIsomorphicGroup(SL(4,2),PSL(4,2));
true
gap> IsIsomorphicGroup(GL(4,2),PSL(4,2));
true
gap> IsIsomorphicGroup(PGL(4,2),PSL(4,2));
true
gap> GroupCohomology(PSL(4,2),3);
[ 2 ]

gap> IsIsomorphicGroup(SU(4,2),PSU(4,2));
true
gap> IsIsomorphicGroup(PGU(4,2),PSU(4,2));
true
gap> GroupCohomology(PSU(4,2),3);
[ 2 ]
gap> GroupCohomology(GU(4,2),3);
[ 2 ]
gap> GroupCohomology(SU(4,3),3);
[ 3, 3 ]
gap> GroupCohomology(PSU(4,3),3);
[ 4, 3, 3 ]
gap> GroupCohomology(GU(4,3),3);
[  ]
gap> GroupCohomology(PGU(4,3),3);
[ 4 ]
gap> GroupCohomology(SU(6,2),3);
[ 2, 2 ]
gap> GroupCohomology(PSU(6,2),3);
[ 2, 2, 3 ]
gap> GroupCohomology(GU(6,2),3);
[  ]
gap> GroupCohomology(PGU(6,2),3);
[ 3 ]

gap> StructureDescription(PSp(4,2));
"S6"
gap> IsIsomorphicGroup(Sp(4,2),PSp(4,2));
true
gap> GroupCohomology(PSp(4,2),3);
[ 2 ]
gap> IsIsomorphicGroup(Sp(6,2),PSp(6,2));
true
gap> GroupCohomology(PSp(6,2),3);
[ 2 ]
\end{verbatim}

%
%
%

\end{document}